\documentclass[11pt,reqno]{amsart}
\usepackage{geometry}               
\numberwithin{equation}{section}

\usepackage{hyperref}

\usepackage{enumitem}
\usepackage{bm}

\newtheorem{theorem}{Theorem}[section]               
\newtheorem{corollary}[theorem]{Corollary}
\newtheorem{lemma}[theorem]{Lemma}
\newtheorem{proposition}[theorem]{Proposition}

\newcommand{\norm}[1]{\lVert#1\rVert}

\newcommand{\absol}[1]{\lvert#1\rvert}
\newcommand{\Absol}[1]{\left\lvert#1\right\rvert}

\newcommand{\C}{\mathbb{C}}
\newcommand{\F}{\mathbb{F}}
\newcommand{\R}{\mathbb{R}}

\newcommand{\Z}{\mathbb{Z}}
\newcommand{\N}{\mathbb{N}}

\title[Sharp $L^2\to L^4$ extension inequality for quadratic surfaces in finite fields]{Sharp $L^2\to L^4$ extension inequality for quadratic surfaces in finite fields}

\subjclass[2020]{42B10, 12E20, 26D15}
\keywords{Fourier extension problem, finite fields, non-degenerate quadratic surfaces, cone, sphere, sharp restriction theory}

\author[P. Ronda]{Pedro Ronda}
\address{Department of Mathematics, Princeton University}
\email{pr8247@princeton.edu}

\begin{document}
	
	\begin{abstract}
		In this work, we establish the sharp $L^2\to L^4$ extension inequality for a large class of quadratic surfaces in $\F^3$, where $\F$ is a finite field of odd characteristic. These surfaces include the cones without the origin and the spheres with nonzero radius~$j$ such that $-j$ is not a square in $\F$. We also show that the maximizers of these inequalities must have constant modulus.
	\end{abstract}
	
	\maketitle
	
	\section{Introduction}
	In recent decades, there has been a focus on reformulating classical problems in harmonic analysis in discrete settings. This trend can be traced back to Wolff's formulation of the Kakeya problem in finite fields \cite{Wo99}, which then inspired Mockenhaupt and Tao \cite{MoTao04} to propose the restriction problem in the same setting. The motivation behind these new problems was to find techniques that work in the discrete setting in the hope of adapting them to the euclidean case. 
	
	Although the discrete models of these problems lack many of the technical difficulties found in their euclidean counterparts, this approach connects them to tools from other areas of mathematics, such as number theory and combinatorics, opening a multitude of possibilities. Dvir's proof of the finite field Kakeya conjecture via the polynomial method~\cite{Dvir09} and the progress that followed on related problems are a success story that many seek to replicate. 
	
	In \cite{GoOS24}, González-Riquelme and Oliveira e Silva initiated the study of sharp extension inequalities on finite fields. They showed that constant functions maximize the extension inequality at the corresponding euclidean Stein--Tomas endpoints from the parabola $\mathbb{P}^1$ and the paraboloid $\mathbb{P}^2$. These sharp inequalities are finite field analogues of Foschi's results \cite[Theorems 1.1 and 1.4]{Fo07} for the euclidean paraboloid. They also showed that the same holds for the $3$-cone without the origin $\Gamma^3$ in fields with cardinality $q\equiv 3$~(mod~$4$). In~\cite{GoIs25}, González-Riquelme and Ismailov proved this also holds when $q\equiv 1$~(mod~$4$). These two results are the finite field analogues of Foschi's result \cite[Theorem 1.5]{Fo07} for the euclidean~cone. 
	
	Beyond establishing sharp extension inequalities, the maximizers for the $L^2\to L^4$ extension inequality from $\mathbb{P}^2$ and $\Gamma^3$ whenever $q\equiv1$ (mod $4$) were fully characterized in \cite{GoOS24} and \cite{GoIs25}, respectively.  
	
	Other directions in this area have been explored. In \cite{BiCaFlOSStTa25}, Biswas, Carneiro, Flock, Oliveira e Silva, Stovall and Tautges established sharp endpoint extension inequalities for the moment curve on finite fields, proving that equality holds if and only if a function has constant modulus, under certain conditions on the dimension $d$ of the moment curve and the cardinality $q$ of the finite field in question. In the author's  M.Sc.\ thesis \cite{Ro25}, the sharp $L^2\to L^4$ extension inequality was established for several curves in the plane $\F^2$, such as the circle $\mathbb{S}^1_j$ with nonzero radius $j\in\F$, the parabola $\mathbb{P}^1$, and the cubic curve ${(t,t^3)}$ without the origin. The respective maximizers were also characterized. Moreover, the result from~\cite{GoOS24} showing that constant functions fail to maximize the $L^2\to L^4$ extension inequality from the $3$-cone $\Gamma_{\bm{0}}^3$ was extended to all $d$-cones $\Gamma_{\bm{0}}^d$ with odd $d$. This can be seen as a partial analogue of an euclidean result due to Negro \cite[Theorem 1.2]{Ne23}.
	
	In this paper, we establish the sharp $L^2\to L^4$ extension inequality for the $2$-spheres $\mathbb{S}^2_j$ with nonzero radius $j$ such that $-j$ is not a square in $\F$, as well as for the $2$-cone without the origin $\Gamma^2$. Both results are proved for a broader class of surfaces. The analogous problem for the euclidean $2$-sphere corresponds to Foschi's result \cite[Theorem 1.1]{Fo15}.
	
	For a detailed exposition of euclidean sharp restriction theory, we refer the reader to \cite{FoOS17} and \cite{NeOSThi23}.
	
	\subsection{Fourier analysis on finite fields}\label{section:fourier}
	Let $\F^d_q$ be the $d$-dimensional vector space over the finite field $\F_q$ with $q$ elements. In what follows, for $\bm{\xi}\in\F_q^d$ we write $\bm{\xi}=(\xi_1,\xi_2,\hdots, \xi_d)$. We will consider $\mathcal{S}\subset\F_q^{d}$ to be an algebraic variety, that is, the zero set in $\F_q^{d}$ of a polynomial in $\Z[\xi_1,\xi_2,\dots,\xi_d]$. We will also use the notation $\F_q^\times:=\F_q\setminus\{0\}$ and if $A$ is a finite set, then $\absol{A}$ denotes its cardinality.
	
	Mockenhaupt and Tao \cite{MoTao04} proposed the extension problem in $\F^d_q$, where $\F_q$ has odd characteristic, as follows: For which pair of exponents $1\leq r,s\leq \infty$ does the extension estimate 
	\begin{equation}\label{equation:extension}
		\norm{(f\sigma)^\vee}_{L^s(\F_q^d,d\bm{x})}
		\leq
		C_{r,s,d,\mathcal{S}}\norm{f}_{L^r(\mathcal{S},d\sigma)}
	\end{equation}
	hold for all $f:\mathcal{S}\to\C$, such that $C_{r,s,d,\mathcal{S}}$ does not depend on $q$? Here, $d\bm{x}$ is the usual counting measure in $\F_q^d$, $\sigma=\sigma_\mathcal{S}$ is the normalized counting measure in $\mathcal{S}$, and
	\begin{equation}\label{equation:fourier_inversion}
		(f\sigma)^\vee(\bm{x})
		=
		\frac{1}{\absol{\mathcal{S}}}\sum_{\bm{\xi}\in\mathcal{S}}f(\bm{\xi})\chi(\bm{\xi}\cdot\bm{x}),
	\end{equation}
	where $\absol{\mathcal{S}}$ is the cardinality of $\mathcal{S}$ and $\chi$ is a non-principal additive character. That is, it is a function $\chi:\F_q\to\mathbb{S}^1\subset\C$ satisfying $\chi(x)\chi(y)=\chi(x+y)$ for all $x,y\in\F_q$. These functions can be listed as 
	\begin{equation*}
		\chi_a(\cdot)=\exp(2\pi i \operatorname{Tr}(a\cdot)) \text{ with } a\in\F_q^\times.
	\end{equation*}
	The trace map $\operatorname{Tr}:\F_q\to\F_p$ is the $\F_p$-linear map defined as
	\begin{equation*} 
		\operatorname{Tr}(x)
		:=
		x+x^p+\dots+x^{p^{n-1}},
	\end{equation*}
	where $p$ is the characteristic of $\F_q$ and $n$ is such that $q=p^n$. Finally, for $\bm{\xi},\bm{x}\in\F_q^d$ we write $\bm{\xi}\cdot\bm{x}:=\xi_1x_1+\xi_2x_2+\dots+\xi_dx_d$. We will also use the notation $\bm{\xi}^2=\bm{\xi}\cdot\bm{\xi}$.

	We denote the smallest constant $C_{r,s,d,\mathcal{S}}$ such that \eqref{equation:extension} holds by ${\bf R}^\ast_{\mathcal{S}}(r\to s)$.	
	
	We will consider a general class of surfaces. To this end, let $Q(\bm{\xi})$ be a quadratic form over~$\F_q$, that is, a homogeneous polynomial in $\F_q[\xi_1,\xi_2,\ldots,\xi_d]$ of degree $2$. Since we will work exclusively over finite fields of odd characteristic, we may express $Q(\bm{\xi})$ as	
	\begin{equation}\label{equation:quadratic_form_def}
		Q(\bm{\xi})
		=
		\sum_{i,j=1}^da_{ij}\xi_i\xi_j \qquad \text{with} \qquad a_{ij}=a_{ji}.
	\end{equation}
	If the matrix $\{a_{ij}\}$, which we will also denote by $Q$, is invertible, we say that the polynomial $Q(\bm{\xi})$ is a non-degenerate quadratic form over $\F_q$. Given $j\in\F_q$ and a non-degenerate quadratic form $Q(\bm{\xi})$, we denote by $\mathcal{S}^{d-1}_j$ the associated non-degenerate quadratic hypersurface, defined as the set
	\begin{equation}\label{equation:def_non-degenerate_surface}
		\mathcal{S}^{d-1}_j
		:=
		\{\bm{\xi}\in\F_q^d:Q(\bm{\xi})=j\}.
	\end{equation}
	From the results in Section \ref{section:convolutions}, it follows that $\absol{\mathcal{S}^{d-1}_j}\approx q^{d-1}$; that is, there are constants $A,B>0$ such that $Aq^{d-1}\leq \absol{\mathcal{S}^{d-1}_j}\leq Bq^{d-1}$. This explains the superscript~$d-1$.
	
	Examples of such hypersurfaces include:
	\begin{enumerate}[label=\textnormal{(}\it{\roman*}\textnormal{)}]
		\item The hyperspheres $\mathbb{S}^{d-1}_j$. These are defined for any radius $j\in\F_q$ by taking $Q=\operatorname{I}_d$, where $\operatorname{I}_d$ is the $d\times d$ identity matrix over $\F_q$.
		\item The hypercones $\Gamma^{d-1}_{\bm{0}}$, with $d\geq 3$. These can be defined as the non-degenerate quadratic hypersurfaces with $j=0$ and $Q$ equal to the block matrix $\operatorname{diag}\left(\operatorname{I}_{d-2},-\frac{1}{2}P\right)$. Here, $P$ is the nontrivial $2\times 2$ permutation matrix over $\F_q$, that is
		\begin{equation*}
			P
			=
			\begin{bmatrix}
				0 & 1\\
				1 & 0
			\end{bmatrix}
			.
		\end{equation*}
	\end{enumerate}
	Unlike for hyperspheres, it may not be immediately obvious why one would define the hypercones this way. This definition ensures that the sets $\Gamma_{\bm{0}}^{d-1}$ always contain lines. For instance, the line $\{(t, 0, \ldots, 0, t, t)\in\F_q^d:t\in\F_q\}$ lies in $\Gamma_{\bm{0}}^{d-1}$. This is one of the simplest ways to mimic the classical property that one of the principal curvatures of the euclidean hypercones is zero. 
	
	One may show that when $-1$ is a square in $\F_q^\times$, there is a linear change of variables that turns $\mathbb{S}^{d-1}_0$ into $\Gamma^{d-1}_{\bm{0}}$. Indeed, letting $w\in\F_q^\times$ be such that $-1=w^2$, one can see that the linear transformation given by $\xi_i\mapsto\xi_i$ for all $i\in[1,d-2]$, $\xi_{d-1}\mapsto w\xi_{d-1}+\xi_d$, and $\xi_{d}\mapsto w\xi_{d-1}-\xi_d$ does just that. For this reason, we shall sometimes refer to the non-degenerate hypersurfaces $\mathcal{S}^{d-1}_j$ as hyperspheres when $j\neq0$ and as hypercones when $j=0$. Moreover, we shall refer to the factor $j\in\F_q$ as the radius of $\mathcal{S}^{d-1}_j$.
	
	We also define the hypercones punctured at the origin as $\Gamma^{d-1}:=\Gamma_{\bm{0}}^{d-1}\setminus\{\bm{0}\}$.
	
	It turns out that, by a linear change of variables, we may restrict our focus to the case where $Q$ is diagonal. This is a consequence of the following theorem.
	\begin{theorem}[Theorem 6.21 in \cite{LiNi97}]
		Every quadratic form $Q(\bm{\xi})=\sum_{i,j=1}^da_{ij}\xi_i\xi_j$ over~$\F_q$, with $q$ odd, can be transformed into a diagonal form $\sum_{i=1}^da_i\xi_i^2$ over $\F_q$ by a nonsingular linear change of variables. Moreover, if $Q(\bm{\xi})$ is a non-degenerate quadratic form, then $a_i\neq0$ for all $i=1,2,\dots,d$. 
	\end{theorem}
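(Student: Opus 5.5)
The statement is the classical diagonalization of symmetric bilinear forms in odd characteristic, and I would prove it by induction on $d$ via a Gram--Schmidt-type procedure. Since $q$ is odd, $2$ is invertible. So $Q$ determines the symmetric bilinear form $B(\bm{\xi},\bm{\eta})=\sum_{i,j}a_{ij}\xi_i\eta_j$ with $Q(\bm{\xi})=B(\bm{\xi},\bm{\xi})$, and conversely $B(\bm{\xi},\bm{\eta})=\tfrac12\bigl(Q(\bm{\xi}+\bm{\eta})-Q(\bm{\xi})-Q(\bm{\eta})\bigr)$. A nonsingular linear change of variables $\bm{\xi}=M\bm{\zeta}$ replaces the matrix $Q$ by $M^{T}QM$. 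It therefore suffices to find a basis $\bm{v}_1,\dots,\bm{v}_d$ of $\F_q^d$ that is $B$-orthogonal, i.e.\ $B(\bm{v}_i,\bm{v}_j)=0$ for $i\neq j$. Taking $M$ with columns $\bm{v}_i$ then gives the diagonal form $\sum_i a_i\zeta_i^2$ with $a_i=Q(\bm{v}_i)$.

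\textbf{Step 1 (find an anisotropic vector).} If $Q\equiv 0$, every basis works, with all $a_i=0$. Otherwise I claim some $\bm{v}$ satisfies $Q(\bm{v})\neq0$. Indeed, if $Q(\bm{v})=0$ for all $\bm{v}$, then the polarization identity forces $B\equiv0$, and hence all $a_{ij}=0$. This is the only point where odd characteristic is used; I expect it to be the one genuine obstacle, since in characteristic $2$ a nonzero form such as $\xi_1\xi_2$ has no diagonalization.

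\textbf{Step 2 (split off and induct).} Set $\bm{v}_1=\bm{v}$ and $W=\{\bm{w}:B(\bm{v}_1,\bm{w})=0\}$. Since $\bm{w}\mapsto B(\bm{v}_1,\bm{w})$ is a nonzero linear functional (it is nonzero at $\bm{v}_1$), $W$ has dimension $d-1$. Moreover $\F_q^d=\F_q\bm{v}_1\oplus W$, by the explicit decomposition $\bm{x}=\frac{B(\bm{v}_1,\bm{x})}{Q(\bm{v}_1)}\bm{v}_1+\bigl(\bm{x}-\frac{B(\bm{v}_1,\bm{x})}{Q(\bm{v}_1)}\bm{v}_1\bigr)$. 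Restricting $B$ to $W$ and applying the inductive hypothesis in dimension $d-1$, after identifying $W\cong\F_q^{d-1}$ through any basis, gives an orthogonal basis $\bm{v}_2,\dots,\bm{v}_d$ of $W$. Together with $\bm{v}_1$ this is an orthogonal basis of $\F_q^d$. The base case $d=1$ is trivial.

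\textbf{Step 3 (non-degeneracy).} If the matrix $Q$ is invertible, then so is $M^{T}QM=\operatorname{diag}(a_1,\dots,a_d)$, since $\det(M^TQM)=\det(M)^2\det Q\neq0$. Hence $\prod a_i\neq0$, so every $a_i\neq0$.
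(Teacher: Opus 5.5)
Your proof is correct and complete. The polarization step, the splitting $\F_q^d=\F_q\bm{v}_1\oplus W$, the induction on $d$, and the identity $\det(M^{T}QM)=\det(M)^2\det(Q)\neq0$ for the non-degeneracy clause all hold. The paper proves nothing here: it imports the result from Lidl--Niederreiter, where the argument is essentially yours. A nonzero form takes some value $a\neq0$, one splits off $a\xi_1^2$ by completing the square (your orthogonal-complement step written in coordinates), and one recurses on the remaining variables.
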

	
	Therefore, given any non-degenerate quadratic hypersurface $\mathcal{S}^{d-1}_j$ defined by $Q$, there is a nonsingular matrix $C$ over $\F_q$ such that $C^TQC$ is an invertible diagonal matrix. The quadratic surface it defines
	is isomorphic to $\mathcal{S}^{d-1}_j$ via a linear transformation.
	Furthermore, since $\det(C^TQC))=\det^2(C)\det(Q)$, this means that $\det(Q)\in\F_q^\times$ is a square if and only if $\det(C^TQC))\in\F_q^\times$ is a square. In the notation of Section \ref{section:exp_sums}, we have that 
	\[\eta(\det(Q))=\eta(\det(C^TQC)).\]
	
	Because the sharp extension problem for a set $\mathcal{S}$ in finite fields is unaffected by applying any affine transformations,
	we may reduce ourselves to the case where 
	\begin{equation*}
		Q(\bm{\xi})=\sum_{i=1}^da_i\xi_i^2, \text{ where } a_i\neq0.
	\end{equation*}
	
	From now on, we focus our attention to $d=3$.
	
	\subsection{Sharp spheric extension} Our first result establishes the sharp $L^2\to L^4$ extension inequality for the non-degenerate quadratic surfaces $\mathcal{S}^{2}_j\subset\F_q^3$ with nonzero radius $j\in\F_q^{\times}$ that satisfy a given condition, equipped with the normalized surface measure $\sigma=\sigma_{\mathcal{S}^2_j}$.
	\begin{theorem}\label{theorem:sharp_L2-L4_2-sphere_jnotsquare}
		Let $\mathcal{S}^{2}_j$ be a non-degenerate quadratic surface defined by a quadratic form $Q$ as in \eqref{equation:def_non-degenerate_surface}.
		When $-j\det(Q)\neq0$ is not a square in $\F_q^{\times}$, the inequality 
		\begin{equation}\label{equation:sharp_L2-L4_2-spheres}
			\norm{(f\sigma)^{\vee}}^4_{L^4(\F_q^3,d\bm{x})}
			\leq
			\frac{q^3}{\absol{\mathcal{S}^2_j}^2}\left(q+1-\frac{3q}{\absol{\mathcal{S}^2_j}}\right)\norm{f}^4_{L^2(\mathcal{S}^2_j,d\sigma)}
		\end{equation}
		is sharp, and equality holds if $f:\mathcal{S}^2_j\to\C$ is a constant function. Moreover, any maximizer of \eqref{equation:sharp_L2-L4_2-spheres} has constant modulus.
	\end{theorem}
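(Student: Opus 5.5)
By Plancherel, the left-hand side of \eqref{equation:sharp_L2-L4_2-spheres} becomes a sum over the additive energy of $f$ on $\mathcal{S}^2_j$. The sharp bound should then follow from a Cauchy--Schwarz argument in the style of Foschi, weighted by the fibre sizes of the convolution $\sigma*\sigma$. Concretely, I will write $(f\sigma)^\vee(f\sigma)^\vee=(f\sigma*f\sigma)^\vee$. Plancherel on $\F_q^3$ then gives
\[
\norm{(f\sigma)^\vee}_{L^4}^4=\frac{q^3}{\absol{\mathcal{S}}^4}\sum_{\bm{\zeta}\in\F_q^3}\Absol{\sum_{\bm{\xi}+\bm{\eta}=\bm{\zeta}}f(\bm{\xi})f(\bm{\eta})}^2,
\]
where the inner sum runs over $\bm{\xi},\bm{\eta}\in\mathcal{S}=\mathcal{S}^2_j$. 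Let $N(\bm{\zeta})=\absol{\{(\bm{\xi},\bm{\eta})\in\mathcal{S}^2:\bm{\xi}+\bm{\eta}=\bm{\zeta}\}}$. Cauchy--Schwarz on each fibre gives
\[
\Absol{\sum f(\bm{\xi})f(\bm{\eta})}^2\le N(\bm{\zeta})\sum\absol{f(\bm{\xi})}^2\absol{f(\bm{\eta})}^2,
\]
so that
\[
\norm{(f\sigma)^\vee}_4^4\le \frac{q^3}{\absol{\mathcal{S}}^4}\sum_{\bm{\xi},\bm{\eta}\in\mathcal{S}}N(\bm{\xi}+\bm{\eta})\absol{f(\bm{\xi})}^2\absol{f(\bm{\eta})}^2 .
\]

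The key step is computing $N(\bm{\zeta})$ using the convolution results of Section~\ref{section:convolutions}. Setting $\bm{\eta}=\bm{\zeta}-\bm{\xi}$, the fibre is the intersection of $Q(\bm{\xi})=j$ with the plane $2B(\bm{\xi},\bm{\zeta})=Q(\bm{\zeta})$, where $B$ is the bilinear form of $Q$. This is a conic in the plane, and its point count is $q\pm1$ or a degenerate value, depending on a quadratic character $\eta(\cdot)$ of an expression like $-\det(Q)\,Q(\bm{\zeta})\,(Q(\bm{\zeta})-4j)$. At $\bm{\zeta}=\bm{0}$ we get $N=\absol{\mathcal{S}}$.

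The hypothesis that $-j\det(Q)$ is a nonsquare should rule out degenerate conics (lines or a single point) when $\bm{\zeta}\neq\bm{0}$. It should then force $N(\bm{\zeta})\le q+1$ for every $\bm{\zeta}\ne\bm{0}$ of the form $\bm{\xi}+\bm{\eta}$. I also expect this hypothesis to control the case $Q(\bm{\zeta})=4j$, where $\bm{\eta}=\bm{\xi}$ can occur: tangent-plane sections of the sphere are pairs of lines only when $-j\det Q$ is a square, so here such a section is the single point $\bm{\zeta}/2$. Verifying this case analysis carefully is the main obstacle I foresee.

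With this, the right-hand side is at most
\[
(q+1)\norm{f}_2^4\absol{\mathcal{S}}^2+\bigl(\absol{\mathcal{S}}-(q+1)\bigr)\sum_{\bm{\xi}}\absol{f(\bm{\xi})}^2\absol{f(-\bm{\xi})}^2 ,
\]
up to normalizations; the second term comes from the antipodal pairs with $\bm{\zeta}=\bm{0}$. The antipodal term must also be handled. The diagonal $\bm{\eta}=\bm{\xi}$ has $N$ smaller than $q+1$, which yields the correction $-3q/\absol{\mathcal{S}}$; I would track the exact count of pairs $(\bm{\xi},\bm{\eta})$ with $N=q+1$ versus the exceptional values to get that constant. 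The excess at $\bm{\zeta}=\bm{0}$ is bounded via $\sum\absol{f(\bm{\xi})}^2\absol{f(-\bm{\xi})}^2\le\sum\absol{f}^4$ paired against the deficit. Alternatively, one can symmetrize first, replacing $f$ by $(f(\bm{\xi})+f(-\bm{\xi}))/2$-type arguments as in \cite{GoOS24}.

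Finally, plugging in $f\equiv1$ gives equality, because all the Cauchy--Schwarz steps are then equalities. The constant $q+1-3q/\absol{\mathcal{S}}$ should emerge as $\absol{\mathcal{S}}^{-2}\sum_{\bm{\zeta}}N(\bm{\zeta})^2$. For the characterization of maximizers, equality in the weighted bound forces equality in the step bounding $N(\bm{\xi}+\bm{\eta})$ by its maximum on the support. Since the exceptional (diagonal or antipodal) sets are thin, this forces $\absol{f(\bm{\xi})}^2\absol{f(\bm{\eta})}^2$ to be compatible across pairs, and hence $\absol{f}$ to be constant. The cleanest route is to write the final bound as $(q+1)\norm{f}_2^4-c\sum_{\bm{\xi}}(\absol{f(\bm{\xi})}^2-\absol{f(-\bm{\xi})}^2)^2$ minus a further nonnegative term. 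Equality then requires those nonnegative terms to vanish, and from there the argument propagates constancy of $\absol{f}$.
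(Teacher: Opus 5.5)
Your outline has a genuine gap at exactly the step that carries the content of the theorem.

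\textbf{The $\bm\zeta=\bm 0$ fibre.} You apply Cauchy--Schwarz on every fibre, including $\bm\zeta=\bm 0$, where $N(\bm 0)=\absol{\mathcal S^2_j}=q^2-q$. This loss cannot be recovered later. Take $f=\delta_{\bm\xi_0}+\delta_{-\bm\xi_0}$. Your weighted sum $\sum_{\bm\xi,\bm\eta}N(\bm\xi+\bm\eta)\absol{f(\bm\xi)}^2\absol{f(\bm\eta)}^2$ equals $2\absol{\mathcal S^2_j}+2=2q^2-2q+2$. The target is $4{\bf C}^\ast=4(q+1)-12/(q-1)$, and the actual left-hand side is $6$. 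The antipodal excess is of order $q^2\sum\absol{f}^4$, while the diagonal deficit from the fibres $Q(\bm\zeta)=4j$ is only $q\sum\absol{f}^4$, so ``pairing'' them does not work. You must first reduce to nonnegative even $f$ via $f_\sharp=\sqrt{(\absol{f(\bm\zeta)}^2+\absol{f(-\bm\zeta)}^2)/2}$, as in Section~\ref{section:symmetrization}. The $\bm 0$ fibre is then kept exactly as $(\sum f^2)^2$. This step is mandatory, not an optional alternative.

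\textbf{The $N=q-1$ fibres.} Even after that, bounding $N\le q+1$ off $\bm\zeta_2=\pm\bm\zeta_1$ fails. With $F=f^2$ it gives $(q+2)(\sum F)^2-(2q+1)\sum F^2$. Already at $F\equiv1$ this exceeds ${\bf C}^\ast\absol{\mathcal S^2_j}^2$ by $\absol{\mathcal S^2_j}(\absol{\mathcal S^2_j}+q-1)$. The fibres with $N=q-1$ (those where $-Q(\bm\zeta)/\det Q$ is a nonzero square) are not exceptional or thin. For each $\bm\zeta_1$ they account for $(q^2-1)/2$ of the $q^2-q-2$ partners $\bm\zeta_2\neq\pm\bm\zeta_1$.

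What is missing is an inequality, valid for every even $F\ge0$, for the quadratic form with kernel $N(\bm\zeta_1+\bm\zeta_2)=q+\eta(-1)\eta(-jQ(\bm\zeta_1+\bm\zeta_2))$. The identity $\absol{\mathcal S^2_j}^{-2}\sum_{\bm\zeta}N(\bm\zeta)^2={\bf C}^\ast$ only shows that constants attain equality, not that they maximize. The paper obtains this inequality through Claims 1 and 2. There, an AM--GM bound on the $N=q+1$ fibres is interpolated with Cauchy--Schwarz via the parameter $x$, combined with the counts $\absol{B_{k_0,k_1}}$ and a split on $\eta(-1)$.

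Your fibrewise route can in fact be closed by a direct Fourier computation instead:
\begin{itemize}
\item One has $\sum_{\bm\xi\in\F_q^3}\eta(-jQ(\bm\xi))\chi(-\bm x\cdot\bm\xi)=q\eta(-1)(1-q\,\delta(Q^{-1}(\bm x)))$.
\item Hence, for even $F$, your bound equals $(q+1)(\sum F)^2-q\sum F^2-q^{-1}\sum_{Q^{-1}(\bm x)=0}\absol{\widehat F(\bm x)}^2$, where $\widehat F(\bm x)=\sum_{\bm\zeta\in\mathcal S^2_j}F(\bm\zeta)\chi(\bm x\cdot\bm\zeta)$.
\item For $F\equiv1$ the transform equals $-q$ at every $\bm x\neq\bm 0$ with $Q^{-1}(\bm x)=0$.
\item Writing $F=\bar F+G$ with $\sum G=0$, the bound becomes ${\bf C}^\ast(\sum F)^2-q\sum G^2-q^{-1}\sum_{\bm x\neq\bm 0,\,Q^{-1}(\bm x)=0}\absol{\widehat G(\bm x)}^2$.
\end{itemize}
This gives the sharp bound and forces $f_\sharp$ to be constant at equality. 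None of this is in your proposal.

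\textbf{Maximizers.} Your maximizer argument likewise has no concrete mechanism. It should run through $f_\sharp$ being constant, together with the equality conditions of the symmetrization step.
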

	
	The cardinality of these surfaces is $q^2-q$, as we shall see in Section \ref{section:convolutions}. Therefore, one may easily verify that ${\bf R}^\ast_{\mathcal{S}^2_j}(2\to4)$ is bounded by a constant that does not depend on $q$, as it should.
	
	Since $\det(\operatorname{Id}_3)=1$, which is always a square in $\F_q^{\times}$, it follows that the previous result holds for the $2$-spheres $\mathbb{S}^2_j$ with nonzero radius $j$, provided that $-j$ is not a square in $\F_q^{\times}$:
	\begin{corollary}
		When $-j\neq0$ is not a square in $\F_q^{\times}$, the inequality 
		\begin{equation}\label{equation:sharp_sphere}
			\norm{(f\sigma)^{\vee}}^4_{L^4(\F_q^3,d\bm{x})}
			\leq
			\frac{q^3}{\absol{\mathbb{S}^2_j}^2}\left(q+1-\frac{3q}{\absol{\mathbb{S}^2_j}}\right)\norm{f}^4_{L^2(\mathbb{S}^2_j,d\sigma)}
		\end{equation}
		is sharp, and equality holds if $f:\mathbb{S}^2_j\to\C$ is a constant function. Moreover, any maximizer of \eqref{equation:sharp_sphere} has constant modulus.
	\end{corollary}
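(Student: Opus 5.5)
The corollary is a direct specialization of Theorem~\ref{theorem:sharp_L2-L4_2-sphere_jnotsquare}, so I do not plan to redo the Fourier-analytic argument. I would only verify that the hypotheses of that theorem hold for the sphere $\mathbb{S}^2_j$.

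By definition, $\mathbb{S}^2_j$ is the non-degenerate quadratic surface $\mathcal{S}^2_j$ of \eqref{equation:def_non-degenerate_surface} with $Q=\operatorname{I}_3$. This matrix is invertible, so $Q$ is a non-degenerate quadratic form and $\mathbb{S}^2_j$ falls within the class covered by the theorem. Moreover, $\det(\operatorname{I}_3)=1=1^2$, so
\[
-j\det(Q)=-j .
\]
The hypothesis of the corollary is that $-j\neq 0$ and $-j$ is not a square in $\F_q^\times$. Hence $-j\det(Q)$ is a nonzero non-square, which is exactly the hypothesis of Theorem~\ref{theorem:sharp_L2-L4_2-sphere_jnotsquare}. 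Equivalently, in the notation of the quadratic character $\eta$, multiplicativity gives $\eta(-j\det Q)=\eta(-j)\eta(1)=\eta(-j)=-1$.

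Applying the theorem with $\mathcal{S}^2_j=\mathbb{S}^2_j$ and $\sigma=\sigma_{\mathbb{S}^2_j}$ then yields three conclusions, each a verbatim specialization. Inequality \eqref{equation:sharp_L2-L4_2-spheres} becomes \eqref{equation:sharp_sphere}. Constant functions attain equality, so the inequality is sharp. Every maximizer has constant modulus.

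There is no real obstacle here. The only point to check is that the reduction to diagonal forms and the invariance of $\eta(\det Q)$ under congruence, discussed before the theorem, are consistent with taking $Q=\operatorname{I}_3$ directly. This is immediate because $\operatorname{I}_3$ is already diagonal with all $a_i=1\neq 0$.
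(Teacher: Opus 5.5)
Your proposal is correct and is the same argument the paper gives. Take $Q=\operatorname{I}_3$ in Theorem~\ref{theorem:sharp_L2-L4_2-sphere_jnotsquare}: since $\det(\operatorname{I}_3)=1$ is a square, the hypothesis that $-j\det(Q)$ is a nonzero non-square reduces to the same condition on $-j$, and all three conclusions carry over verbatim.
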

	
	The euclidean counterpart of this inequality is the Stein--Tomas endpoint extension \mbox{inequality} for the $2$-sphere $\mathbb{S}^2\subset\R^3$. The sharp form of this inequality was established by Foschi \cite{Fo15}, and he also showed that constant functions are the only nonnegative functions that maximize it. From the corollary, constant functions also maximize the respective finite field inequality, which hints at a possible deeper connection between these two problems.
	
	\subsection{Sharp conic extension}
	The following theorem establishes the sharp $L^2\to L^4$ extension inequality forx the non-degenerate quadratic surfaces with radius $j=0$ without the origin, which we will denote as $\mathcal{S}^2_\times:=\mathcal{S}^2_0\setminus\{\bm{0}\}$. We equip it with the normalized surface measure $\sigma=\sigma_{\mathcal{S}^2_\times}$.
	\begin{theorem}\label{theorem:sharp_L2-L4_2-cones}
		Let $\mathcal{S}^2_\times\subset\F_q^3$ be a non-degenerate quadratic surface with zero radius without the origin. Then, the inequality 
		\begin{equation}\label{equation:sharp_L2-L4_2-cones}
			\norm{(f\sigma)^{\vee}}^4_{L^4(\F_q^3,d\bm{x})}
			\leq
			\frac{q^3}{\absol{\mathcal{S}^2_\times}^2}\left(q+1-\frac{3q-4}{\absol{\mathcal{S}^2_\times}}\right)\norm{f}^4_{L^2(\mathcal{S}^2_\times,d\sigma)}
		\end{equation}
		is sharp, and equality holds if $f:\mathcal{S}^2_\times\to\C$ is a constant function. Moreover, any maximizer of \eqref{equation:sharp_L2-L4_2-cones} has constant modulus.
	\end{theorem}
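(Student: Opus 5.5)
The plan is to follow the standard finite-field strategy. First rewrite the $L^4$ norm as an $L^2$ norm of a convolution, then bound that convolution pointwise using Cauchy--Schwarz together with exact counts of the fibres of the sum map. By Plancherel on $\F_q^3$,
\[
\norm{(f\sigma)^\vee}_{L^4}^4=\norm{(f\sigma)^\vee(f\sigma)^\vee}_{L^2}^2=\frac{q^3}{\absol{\mathcal{S}^2_\times}^4}\sum_{\bm{\zeta}\in\F_q^3}\Absol{\sum_{\bm{\xi}+\bm{\eta}=\bm{\zeta}}f(\bm{\xi})f(\bm{\eta})}^2,
\]
where $\bm{\xi},\bm{\eta}$ range over $\mathcal{S}^2_\times$. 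For each $\bm{\zeta}$ let $N(\bm{\zeta})=\absol{\{(\bm{\xi},\bm{\eta})\in(\mathcal{S}^2_\times)^2:\bm{\xi}+\bm{\eta}=\bm{\zeta}\}}=(\sigma*\sigma)(\bm{\zeta})\absol{\mathcal{S}^2_\times}^2$. Cauchy--Schwarz on each fibre gives
\[
\absol{\textstyle\sum f(\bm{\xi})f(\bm{\eta})}^2\le N(\bm{\zeta})\sum_{\bm{\xi}+\bm{\eta}=\bm{\zeta}}\absol{f(\bm{\xi})}^2\absol{f(\bm{\eta})}^2 .
\]
A plain $\sup_{\bm{\zeta}} N$ bound is too lossy here, because $N$ is not constant. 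The plan is therefore to split $\bm{\zeta}$ according to the value of $Q(\bm{\zeta})$ and to whether $\bm{\zeta}=\bm{0}$.

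The key computation, presumably the content of Section \ref{section:convolutions}, is $N(\bm{\zeta})$. For $\bm{\zeta}\neq\bm0$ the conditions $Q(\bm{\xi})=0$ and $Q(\bm{\zeta}-\bm{\xi})=0$ reduce to $Q(\bm{\xi})=0$ and $2B(\bm{\xi},\bm{\zeta})=Q(\bm{\zeta})$. This is a conic: the intersection of the cone with a plane. When $Q(\bm{\zeta})\neq0$ the plane avoids the vertex and the conic is non-degenerate, with $q+1$ points. When $Q(\bm{\zeta})=0$ the plane is tangent along the line $\F_q\bm{\zeta}$, giving $q$ points. We then remove pairs in which $\bm{\xi}$ or $\bm{\eta}$ is the origin, which excludes $\bm{\xi}=\bm0$ and $\bm{\xi}=\bm{\zeta}$. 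This gives $N=q+1$ off the cone and $N=q-2$ on $\mathcal{S}^2_\times$ itself. For $\bm{\zeta}=\bm0$ we get $N(\bm0)=\absol{\mathcal{S}^2_\times}=q^2-1$, since $\bm{\eta}=-\bm{\xi}$.

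The $\bm{\zeta}=\bm0$ term is the real obstacle, because there Cauchy--Schwarz would cost a factor of order $q^2$. Instead we treat it exactly: it equals $\absol{\sum_{\bm{\xi}} f(\bm{\xi})f(-\bm{\xi})}^2\le\norm{f}_{\ell^2}^4$ by Cauchy--Schwarz in $\bm{\xi}$. For the remaining terms we have $N\le q+1$, so their contribution is at most
\[
(q+1)\Big(\norm{f}_{\ell^2}^4-\sum_{\bm{\xi}}\absol{f(\bm{\xi})}^2\absol{f(-\bm{\xi})}^2\Big)-3\sum_{\bm{\zeta}\in\mathcal{S}^2_\times}\sum_{\bm{\xi}+\bm{\eta}=\bm{\zeta}}\absol{f(\bm{\xi})}^2\absol{f(\bm{\eta})}^2 .
\]
The subtracted terms must be recovered in the form $-(3q-4)\norm{f}_{\ell^2}^4/\absol{\mathcal{S}^2_\times}$ after normalization. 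For that we need lower bounds on $\sum\absol{f(\bm{\xi})}^2\absol{f(-\bm{\xi})}^2$ and on the mass of $\absol{f}^2*\absol{f}^2$ on the cone, in terms of $\norm{f}^4$. Such lower bounds fail for general $f$, so this step needs a better device. The plan is to set $g=\absol{f}^2$ and write the total bound as a quadratic form $\langle Kg,g\rangle$ with kernel
\[
K(\bm{\xi},\bm{\eta})=N(\bm{\xi}+\bm{\eta})\ \text{ off the diagonal cases, corrected at }\bm{\eta}=-\bm{\xi}.
\]
We then bound $\langle Kg,g\rangle\le\lambda_{\max}(K)\,\norm{g}_{\ell^1}^2/\absol{\mathcal{S}^2_\times}$. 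This works if the constant vector is an eigenvector whose eigenvalue dominates; in fact we want $K=c\mathbf{1}\mathbf{1}^T-(\text{positive semidefinite})$. Since $K$ depends only on $Q(\bm{\xi}+\bm{\eta})=2B(\bm{\xi},\bm{\eta})$ and is invariant under the orthogonal group of $Q$, its eigenvalues can be computed with Gauss/Kloosterman-type character sums from Section \ref{section:exp_sums}. The key check is that the non-constant eigenvalues are $\le0$, or at least no larger than what the bound allows.

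Equality for constant $f$ follows by direct evaluation, since every inequality above is then tight. A maximizer must make both Cauchy--Schwarz steps tight. In particular, in the $\bm{\zeta}=\bm0$ term it forces $\absol{f(-\bm{\xi})}=c\absol{f(\bm{\xi})}$. The spectral step forces $\absol{f}^2$ to be constant, provided the non-constant eigenspaces have strictly negative eigenvalues. This gives the constant modulus conclusion.
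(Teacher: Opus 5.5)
\textbf{There is a genuine gap, and it lies in the step you defer to a spectral computation.}

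\emph{Fibre count.} First a smaller error: off the cone the fibre count is not always $q+1$. The affine conic $\{Q(\bm{\xi})=0,\ 2Q(\bm{\xi},\bm{\zeta})=Q(\bm{\zeta})\}$ misses the isotropic lines contained in the plane $\{Q(\cdot,\bm{\zeta})=0\}$, and there are $2$ or $0$ of them. So $N(\bm{\zeta})=q-1$ or $q+1$ according to whether $-Q(\bm{\zeta})/\det(Q)$ is a square. Using $N\le q+1$ throughout already gives a constant of size about $q+2$ for $f\equiv1$.

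\emph{The quadratic form itself is too large.} More seriously, even with the exact $N$, your quadratic form is \emph{not} bounded by $C\norm{g}_{\ell^1}^2$ for all $g\ge0$, where $C=q+1-\frac{3q-4}{q^2-1}$. So no eigenvalue computation can close the argument. Work in the variables $\bm{\zeta}_1,\bm{\zeta}_2\in\mathcal{S}_\times$ and use that $Q(\bm{\zeta}_1+\bm{\zeta}_2)=0$ iff $\bm{\zeta}_2\in\mathcal{L}_{\bm{\zeta}_1}$. One finds
\begin{equation*}
\langle Kg,g\rangle=(q+1)\Big(\sum g\Big)^2-(q-2)\sum_{\bm{\zeta}}g(\bm{\zeta})g(-\bm{\zeta})-2\sum_{[\bm{s}]}\Big(\sum_{\alpha\in\F_q^\times}g(\alpha\bm{s})\Big)^2-\langle Eg,g\rangle,
\end{equation*}
where $E(\bm{\zeta}_1,\bm{\zeta}_2)=\eta(-2Q(\bm{\zeta}_1,\bm{\zeta}_2)/\det(Q))$.
\begin{itemize}
\item If $\eta(-1)=-1$, then $E$ annihilates even functions, has odd range and has zero diagonal.
\item Hence on odd $h$, where the first and third terms vanish, $K=(q-2)I-E$ has trace $(q-2)\absol{\mathcal{S}_\times}/2>0$.
\item Take an odd $h$ with $\langle Kh,h\rangle>0$ and set $g=\mathbf{1}+\epsilon h\ge0$. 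The cross term vanishes by parity, so $\langle Kg,g\rangle=C\norm{g}_{\ell^1}^2+\epsilon^2\langle Kh,h\rangle$, which exceeds the target.
\end{itemize}
The loss comes from bounding the $\bm{\zeta}=\bm{0}$ term by $\norm{f}_{\ell^2}^4$. This is very wasteful when $f$ is far from even.

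\emph{Missing idea: symmetrization.} The paper first applies the antipodal symmetrization $f\mapsto f_\sharp$ of Section~\ref{section:symmetrization}, which reduces everything to nonnegative even $f$. The $\bm{0}$-term is then exactly $(\sum f^2)^2$.
\begin{itemize}
\item For $\eta(-1)=-1$ the $E$-term drops out. On even functions $K=(q+1)\mathbf{1}\mathbf{1}^T-(q-2)I-2L$, where $L$ is the same-line matrix. Cauchy--Schwarz over lines and over points then finishes. This is your plan, and it is exactly the paper's proof in that case.
\item For $\eta(-1)=1$, however, $E$ acts on even functions and fibrewise Cauchy--Schwarz is still too lossy. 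Choose line representatives $\bm{s}_\ell$ with $E(\bm{s}_\ell,\bm{s}_{\ell'})$ independent of $\ell\neq\ell'$, e.g.\ $(1,t,t^2)$ and $(0,0,1)$ for $\xi_2^2=\xi_1\xi_3$. Then $K$ has eigenvalue $+1$ on the even, mean-zero functions $h(\alpha\bm{s}_\ell)=\eta(\alpha)v_\ell$ with $\sum_\ell v_\ell=0$. So $g=\mathbf{1}+\epsilon h$ breaks the bound even though $\sqrt{g}$ is even.
\end{itemize}

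\emph{How the paper handles $\eta(-1)=1$.} On the non-square fibres it splits off only $(q-1-x)\sum f^2\cdot f^2$, with $x=4/(q-1)$. The remainder is bounded by $\frac{2+x}{q+1}\sum(\sum f\cdot f)^2$. This is then bounded by AM--GM on $f(\bm{\zeta}_1)f(\bm{\zeta}_2)\cdot f(\bm{\xi}-\bm{\zeta}_1)f(\bm{\xi}-\bm{\zeta}_2)$, using the exact count $\mathfrak{C}(\bm{\zeta}_1,\bm{\zeta}_2)$ of non-square $\bm{\xi}$ with $\bm{\xi}-\bm{\zeta}_1,\bm{\xi}-\bm{\zeta}_2\in\mathcal{S}_\times$. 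The twisted terms then cancel and the same final expression results.

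\emph{Maximizers.} Constant modulus of maximizers likewise comes from the equality conditions of the symmetrization ($f_\sharp$ constant together with \eqref{equation:symmetrization_cond2}), not from a spectral gap.
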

	
	Once again, one can check that ${\bf R}^\ast_{\mathcal{S}^2_\times}(2\to4)$ is indeed bounded by a constant that does not depend on $q$, since $\absol{\mathcal{S}^2_\times}=q^2-1$, as we shall see in Section~\ref{section:convolutions}. Moreover, we have the following corollary:
	\begin{corollary}
		The inequality 
		\begin{equation}\label{equation:sharp_cone}
			\norm{(f\sigma)^{\vee}}^4_{L^4(\F_q^3,d\bm{x})}
			\leq
			\frac{q^3}{\absol{\Gamma^2}^2}\left(q+1-\frac{3q-4}{\absol{\Gamma^2}}\right)\norm{f}^4_{L^2(\Gamma^2,d\sigma)}
		\end{equation}
		is sharp, and equality holds if $f:\Gamma^2\to\C$ is a constant function. Moreover, any maximizer of \eqref{equation:sharp_cone} has constant modulus.
	\end{corollary}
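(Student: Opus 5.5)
The corollary follows immediately from Theorem~\ref{theorem:sharp_L2-L4_2-cones} once we check that $\Gamma^2$ is one of the surfaces $\mathcal{S}^2_\times$ covered there. By the definition in item (ii) with $d=3$, the cone $\Gamma^2_{\bm{0}}$ is the zero set of the quadratic form with matrix
\[
Q=\operatorname{diag}\left(1,-\tfrac{1}{2}P\right)=
\begin{bmatrix}
1 & 0 & 0\\
0 & 0 & -\tfrac12\\
0 & -\tfrac12 & 0
\end{bmatrix},
\]
that is, $Q(\bm{\xi})=\xi_1^2-\xi_2\xi_3$.

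First we check that $Q$ is non-degenerate. Its determinant is $\det Q=1\cdot\bigl(0-\tfrac14\bigr)=-\tfrac14$. This is nonzero in $\F_q$, since $q$ is odd and so $2$ is invertible. Hence $\Gamma^2_{\bm{0}}=\mathcal{S}^2_0$ for this non-degenerate $Q$, and therefore $\Gamma^2=\Gamma^2_{\bm{0}}\setminus\{\bm{0}\}=\mathcal{S}^2_\times$.

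Second, we apply Theorem~\ref{theorem:sharp_L2-L4_2-cones} verbatim with $\mathcal{S}^2_\times=\Gamma^2$. The normalized measure $\sigma_{\mathcal{S}^2_\times}$ is by definition $\sigma_{\Gamma^2}$, and $\absol{\mathcal{S}^2_\times}=\absol{\Gamma^2}$. This gives \eqref{equation:sharp_cone} together with its sharpness, the fact that constant functions attain equality, and the constant-modulus property of maximizers.

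There is no real obstacle here. The only point to check is that Theorem~\ref{theorem:sharp_L2-L4_2-cones} imposes no condition on $\eta(\det Q)$, unlike the spherical case. So the value of $\det Q=-\tfrac14$, and in particular whether $-1$ is a square in $\F_q$, plays no role. As a consistency check, $\absol{\Gamma^2}=q^2-1$ matches the count stated after the theorem.
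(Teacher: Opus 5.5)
Your proposal is correct and follows the paper's own route: the paper obtains this corollary directly from Theorem~\ref{theorem:sharp_L2-L4_2-cones}. You observe that $\Gamma^2=\mathcal{S}^2_\times$ for the non-degenerate form $Q=\operatorname{diag}(1,-\tfrac12 P)$ with $\det Q=-\tfrac14\neq 0$, and you correctly note that the theorem places no hypothesis on $\eta(\det Q)$ or on $\eta(-1)$.
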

	
	Although this inequality does not hold for the euclidean cone, it corresponds to the endpoint of the Riesz diagram for the respective extension problem in the finite field setting. 
	
	It is still unclear whether constant functions maximize the $L^2\to L^4$ extension inequality from the $2$-cone with origin $\Gamma^2_{\bm{0}}$. As mentioned in the introduction, the author's M.Sc.\ thesis~\cite{Ro25} established that such functions never maximize the $L^2\to L^4$ extension inequality from the $d$-cone with origin $\Gamma^d_{\bm{0}}$ when $d$ is odd.
	
	For completeness, we also include the corresponding result for the $2$-spheres with zero radius without the origin, which we denote by $\mathbb{S}^2_\times:=\mathbb{S}^2_0\setminus\{\bm{0}\}$.
	\begin{corollary}
		The inequality 
		\begin{equation}\label{equation:sharp_zero_sphere}
			\norm{(f\sigma)^{\vee}}^4_{L^4(\F_q^3,d\bm{x})}
			\leq
			\frac{q^3}{\absol{\mathbb{S}^2_\times}^2}\left(q+1-\frac{3q-4}{\absol{\mathbb{S}^2_\times}}\right)\norm{f}^4_{L^2(\mathbb{S}^2_\times,d\sigma)}
		\end{equation}
		is sharp, and equality holds if $f:\mathbb{S}^2_\times\to\C$ is a constant function. Moreover, any maximizer of \eqref{equation:sharp_zero_sphere} has constant modulus.
	\end{corollary}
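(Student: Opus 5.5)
The plan is to deduce this corollary directly from Theorem~\ref{theorem:sharp_L2-L4_2-cones}; no new analysis should be needed. By definition, $\mathbb{S}^2_0=\{\bm{\xi}\in\F_q^3:\bm{\xi}^2=0\}$ is the quadratic surface \eqref{equation:def_non-degenerate_surface} attached to the quadratic form $Q(\bm{\xi})=\xi_1^2+\xi_2^2+\xi_3^2$, with radius $j=0$. Its matrix is $\operatorname{I}_3$, and $\det(\operatorname{I}_3)=1\neq0$, so $Q$ is non-degenerate.

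Consequently $\mathbb{S}^2_\times=\mathbb{S}^2_0\setminus\{\bm{0}\}$ is exactly the set $\mathcal{S}^2_\times$ of Theorem~\ref{theorem:sharp_L2-L4_2-cones} for this choice of $Q$. The measure $\sigma_{\mathbb{S}^2_\times}$ is the same normalized counting measure used in that theorem.

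Applying Theorem~\ref{theorem:sharp_L2-L4_2-cones} with this $Q$ gives \eqref{equation:sharp_zero_sphere} verbatim, with the constant $\frac{q^3}{\absol{\mathbb{S}^2_\times}^2}\bigl(q+1-\frac{3q-4}{\absol{\mathbb{S}^2_\times}}\bigr)$. The same theorem supplies the two remaining claims:
\begin{enumerate}[label=\textnormal{(}\it{\roman*}\textnormal{)}]
\item the inequality is sharp, with equality for constant functions;
\item every maximizer has constant modulus.
\end{enumerate}

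Note that Theorem~\ref{theorem:sharp_L2-L4_2-cones} has no hypothesis on whether $-1$ or $\det(Q)$ is a square in $\F_q^\times$, unlike Theorem~\ref{theorem:sharp_L2-L4_2-sphere_jnotsquare}. So no case split on $q$ modulo $4$ is required. When $-1$ is a square, the explicit linear change of variables from the introduction even identifies $\mathbb{S}^2_0$ with $\Gamma^2_{\bm{0}}$ and fixes the origin, but we do not need this.

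For completeness I would also record that $\absol{\mathbb{S}^2_\times}=q^2-1$, which follows from the point counts of Section~\ref{section:convolutions}. This confirms that the constant is bounded independently of $q$. There is no real obstacle here: the only point to check is that the zero-radius sphere falls within the class of non-degenerate quadratic surfaces covered by the theorem, which is immediate from $\det(\operatorname{I}_3)=1$.
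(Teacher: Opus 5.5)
Your proposal is correct and matches the paper: the corollary is the specialization of Theorem~\ref{theorem:sharp_L2-L4_2-cones} to $Q=\operatorname{I}_3$ with $j=0$, which is non-degenerate because $\det(\operatorname{I}_3)=1$. The point count $\absol{\mathbb{S}^2_\times}=q^2-1$ also follows from Proposition~\ref{proposition:two-fold-conv-sphere_even-odd-d}, as you note.
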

	
	There is no clear euclidean equivalent of this result, since a hypersphere with zero radius in $\R^d$ is just the origin.
	
	Finally, we note that cones with signature $(a,b)$, where $a,b\in\Z^+$ and $a+b=3$, defined in \cite{GoIs25} by	
	\begin{equation*}
		\Gamma^{2}_{(a,b)}:=\left\{\bm{\xi}\in\F_q^3:\sum_{i=1}^{a}\xi_i^2=\sum_{j=a+1}^{3}\xi_j^2\right\},
	\end{equation*}
	fall under this class of surfaces when punctured at the origin. Therefore, Theorem \ref{theorem:sharp_L2-L4_2-cones} holds for $\Gamma^{2}_{(a,b)}\setminus\{\bm{0}\}$ as well. We will not make use of this notation or refer to these particular surfaces again. 
	
	\subsection{Overview of the proofs}\label{section:overview}
	Both proofs of Theorems \ref{theorem:sharp_L2-L4_2-sphere_jnotsquare} and \ref{theorem:sharp_L2-L4_2-cones} start by rewriting the respective problem in convolution form:
	\begin{equation}\label{equation:starting}
		\sum_{\bm{\xi}\in\F_q^3}\Biggl|\sum_{\substack{\bm{\zeta}_1,\bm{\zeta}_2\in\mathcal{S} \\ \bm{\zeta}_1+\bm{\zeta}_2=\bm{\xi}}}f(\bm{\zeta}_1)f(\bm{\zeta}_2)\Biggr|^2
		\leq
		{\bf C}^\ast_{\mathcal{S}}(2\to4)\left(\sum_{\bm{\xi}\in\mathcal{S}}\absol{f(\bm{\xi})}^2\right)^2,
		\text{ for $\mathcal{S}\in\{\mathcal{S}^2_j,\mathcal{S}^2_\times\}$,}
	\end{equation}
	with $j\in\F_q^\times$, where the best constants are related by ${\bf C}^\ast_{\mathcal{S}}(2\to4)=q^{-3}\absol{\mathcal{S}}^{2}{\bf R}^\ast_{\mathcal{S}}(2\to4)^4$. For more details on this step check \cite[Proposition 2.1]{GoOS24}.
	
	We then apply a symmetrization argument, similar to the one used in \cite{Fo15}. This was also used in \cite{GoIs25}.
	
	Collecting and rearranging terms requires a better understanding of the set
	\begin{equation*}
		\Sigma^2_{\mathcal{S}}(\bm{\xi})
		:=
		\{(\bm{\zeta}_1,\bm{\zeta}_2)\in\mathcal{S}\times\mathcal{S}:\bm{\zeta}_1+\bm{\zeta}_2=\bm{\xi}\},\text{ for all } \bm{\xi}\in\F_q^3.
	\end{equation*}
	We call this set the preimage of the $2$-sum. It is perhaps no surprise that this set shares the following relation (check \cite{GoOS24} or \cite[\S 2.1]{Ro25} for more details) with the two-fold convolution of the normalized surface measure $\sigma_{\mathcal{S}}$:
	\begin{equation}\label{equation:conv_relation}
		\absol{\Sigma^2_{\mathcal{S}}(\bm{\xi})}
		=
		\frac{\absol{\mathcal{S}}^2}{q^3}(\sigma_{\mathcal{S}}\ast\sigma_{\mathcal{S}})(\bm{\xi}),
		\text{ for all } \bm{\xi}\in\F_q^3.
	\end{equation}
	For this reason, we devote Section \ref{section:convolutions} to the calculation of such convolutions, from which we then derive the expressions for $\absol{\Sigma^2_{\mathcal{S}}(\bm{\xi})}$. These computations are also found in the author's M.Sc.\ thesis in the form of \cite[Propositions 3.2 and 3.3]{Ro25}.
	
	At this point of the proof, we must consider two cases separately: $q\equiv1$ (mod $4$) and $q\equiv3$~(mod $4$). This division corresponds to whether $-1$ is or is not a square in $\F_q^{\times}$, respectively, and is needed to rearrange the remaining terms. 
	
	After establishing the sharp inequalities for $\mathcal{S}^2_j$ and $\mathcal{S}^2_\times$ in Sections \ref{section:proof_spheres} and \ref{section:proof_cones}, respectively, we show that their maximizers must have constant modulus in Section \ref{section:maximizers_const_mod}. The argument is the same as the one used in \cite{GoIs25}.
	
	While the proofs for $\mathcal{S}^2_j$ and $\mathcal{S}^2_\times$ are similar, differences arise when rearranging the final terms and finishing the proof. Nevertheless, both proofs are quite different from the proof of the sharp $L^2\to L^4$ extension inequality for the $2$-paraboloid $\mathbb{P}^2$ found in \cite{GoOS24}. This might come as a surprise, given that in the euclidean setting, the paraboloids and the cones are more closely related since both are noncompact manifolds.
	
	\section{Preliminaries}
	The main goal of this section is to state some results regarding exponential sums on finite fields. These will also be necessary in subsequent sections, not only to calculate the relevant convolutions, but also to carry out some arguments in the proof of Theorems \ref{theorem:sharp_L2-L4_2-sphere_jnotsquare} and \ref{theorem:sharp_L2-L4_2-cones}.  
	
	\subsection{Notation}\label{section:notation}
	We will only focus on finite fields $\F_q$ with odd characteristic.
	
	Given a set $A$, we denote its indicator function by $\bm{1}_A$. We extend this notation to statements $S$, defining $\bm{1}(S)=1$ if $S$ is true and $\bm{1}(S)=0$ if $S$ is false. We also make use of the delta function $\delta:\F_q^d\to\{0,1\}$, with $\delta(\bm{\zeta})=1$ if and only if $\bm{\zeta}=\bm{0}$. If $A=\{\bm{\xi}\}$, then we will also denote $\bm{1}_A$ by $\delta_{\bm{\xi}}$. Given a subset $A\subset\F_q^d$, we denote by $-A$ its negative set, defined as $-A:=\{-\bm{x}:\bm{x}\in S\}$.
	
	We denote by $\mathcal{Q}$ the set of all squares in $\F_q^\times$. That is, for any $x\in\mathcal{Q}$ there is $y\in\F_q^\times$ such that $x=y^2$. We also define the set of non-squares in $\F_q^\times$ as $\tilde{\mathcal{Q}}:=\F_q^\times\setminus\mathcal{Q}$. Notice that $y^2-z^2=0$ if and only if $y=z$ or $y=-z$. Hence, we have that $\absol{\mathcal{Q}}=(q-1)/2$ and, as a consequence, $\absol{\tilde{\mathcal{Q}}}=(q-1)/2$. Notice that in these considerations we have already used the fact that $q$ is not even.
	
	Given a quadratic form $Q(\cdot)$ as defined in \eqref{equation:quadratic_form_def}, we define the bilinear map $Q(\cdot,\cdot)$ as
	\begin{equation*}
		Q(\bm{\xi},\bm{\zeta})
		:=
		\sum_{i,j=1}^{d}a_{ij}\xi_i\zeta_j.
	\end{equation*}
	Notice that $Q(\bm{\xi},\bm{\xi})=Q(\bm{\xi})$ for all $\bm{\xi}$. This will only be used in Sections \ref{section:proof_spheres} and \ref{section:proof_cones}.
	
	Finally, we adopt the simplified notation for sums from \cite{GoIs25}. Letting $g$ be a function on $\mathcal{S}\subset\F_q^d$, $A\subset\mathcal{S}$, and $\bm{\xi}\in\F_q^3$, we shall write
	\begin{equation*}
		\sum_{A}g:=\sum_{\bm{\zeta}\in A}g(\bm{\zeta})
		\text{ and }
		\sum_{A,\bm{\xi}}g\cdot g:=\sum_{\substack{\bm{\zeta}\in A\\ (\bm{\xi}-\bm{\zeta})\in\mathcal{S}}}g(\bm{\zeta})g(\bm{\xi}-\bm{\zeta}).
	\end{equation*}
	
	\subsection{Exponential sums on finite fields}\label{section:exp_sums}
	A great reference for this topic is \cite[\S5]{LiNi97}. 
	
	We start by letting $\chi$ be a fixed non-principal additive character. It can be shown, check \cite[Theorems 5.4 and 5.7]{LiNi97}, that
	\begin{equation}\label{equation:nonprincipal_character_sum}
		\sum_{x\in\F_q}\chi(x)=0.
	\end{equation}
	Furthermore, since $\chi(\bm{x}\cdot\bm{\xi})=\prod_{i=1}^{d}\chi(x_i\xi_i)$, it can be shown that
	\begin{equation*}
		\sum_{\bm{x}\in\F_q^d}\chi(\bm{x}\cdot\bm{\xi})
		=
		\begin{cases}
			q^d, &\text{ if } \bm{\xi}=\bm{0}, \\
			0, &\text{ if } \bm{\xi}\neq\bm{0}.
		\end{cases}
	\end{equation*}
	Therefore, we may use this to write the delta function as an exponential sum. Indeed,
	\begin{equation*}
		\delta(\bm{\zeta})
		=
		q^{-d}\sum_{\bm{x}\in\F_q^d}\chi(\bm{x}\cdot\bm{\zeta}).
	\end{equation*}

	The finite abelian group with respect to multiplication $\F_q^\times$ admits a non-trivial multiplicative character $\eta$ of order two. That is, $\eta(xy)=\eta(x)\eta(y)$ and $\eta^2(x)=1$ for all $x,y\in\F_q^{\times}$. In other words, $\eta(x)=1$ if $x\in\mathcal{Q}$ and $\eta(x)=-1$ otherwise. Since $\eta^2(x)=1$, we have that $\eta^{-1}(x)=\eta(x)$ for all $x\in\F_q^{\times}$. Moreover, since $\absol{\mathcal{Q}}=\absol{\tilde{\mathcal{Q}}}$, it follows that 
	\[\sum_{x\in\F_q^{\times}}\eta(x)=0.\]
	When $q=p$ is a prime, this quadratic character is the standard Legendre symbol. Although $\eta$ is defined only on $\F_q^{\times}$, we extend it to all of $\F_q$ by setting $\eta(0):=0$. 
	
	We define the gaussian sum as
	\begin{equation*}
		G(\eta,\chi)
		:=
		\sum_{x\in\F_q^{\times}}\eta(x)\chi(x)
		=
		\sum_{x\in\F_q}\eta(x)\chi(x).
	\end{equation*}
	Notice that we could have defined it a bit more generally as $G_a(\eta,\chi):=\sum_{x\in\F_q^{\times}}\eta(x)\chi(ax)$, for each $a\in\F_q^{\times}$, but these are related to the special case $a=1$ via
	\begin{equation}\label{equation:general_gauss_sums}
		G_a(\eta,\chi)
		=
		\sum_{x\in\F_q^{\times}}\eta(x)\chi(ax)
		\underset{y=ax}{=}
		\eta(a^{-1})\sum_{y\in\F_q^{\times}}\eta(y)\chi(y)
		=
		\eta(a)G(\eta,\chi),
	\end{equation}
	where we used the fact that $\eta(a^{-1})=\eta^{-1}(a)=\eta(a)$ in the last equality. Moreover, using the convention $\eta(0)=0$, this expression also works for $a=0$. We shall also use the terminology gaussian sums for this general class as well. The gaussian sum may be calculated explicitly. However, we will only need the value of $G^2(\eta,\chi)$.
	
		\begin{lemma}[Theorem 5.15 in \cite{LiNi97}]\label{theorem:square-gauss-sum_Fq}
		Let $\F_q$ be a finite field with $q=p^n$ elements, where $p$ is an odd prime and $n\in\N$. Let $\eta$ be the quadratic character of $\F_q$ and let $\chi$ be a nonprincipal additive character of $\F_q$. Then, 
		\begin{equation*}
			G^2(\eta,\chi)
			=
			q\eta(-1).
		\end{equation*}
	\end{lemma}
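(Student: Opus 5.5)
The statement just above is Lemma~\ref{theorem:square-gauss-sum_Fq}: $G^2(\eta,\chi)=q\eta(-1)$. I would prove it in two steps. First compute $\absol{G(\eta,\chi)}^2=q$. Then relate $\overline{G(\eta,\chi)}$ to $G(\eta,\chi)$ through the factor $\eta(-1)$.

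\emph{Step 1: the modulus.} Expand the product as a double sum over $x,y\in\F_q^\times$:
\[
\absol{G(\eta,\chi)}^2=\sum_{x,y\in\F_q^\times}\eta(x)\eta(y)\overline{\chi(y)}\chi(x)=\sum_{x,y\in\F_q^\times}\eta(xy^{-1})\chi(x-y),
\]
using $\eta(y)=\eta(y^{-1})$ and $\overline{\chi(y)}=\chi(-y)$. Now substitute $x=ty$ with $t\in\F_q^\times$, which turns the sum into
\[
\sum_{t\in\F_q^\times}\eta(t)\sum_{y\in\F_q^\times}\chi\bigl((t-1)y\bigr).
\]
The inner sum is handled by \eqref{equation:nonprincipal_character_sum}. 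It equals $q-1$ when $t=1$. For $t\neq1$ the map $y\mapsto(t-1)y$ is a bijection of $\F_q^\times$, so the inner sum equals $-1$. Hence
\[
\absol{G(\eta,\chi)}^2=(q-1)\eta(1)-\sum_{t\neq0,1}\eta(t)=q-\sum_{t\in\F_q^\times}\eta(t)=q,
\]
since $\eta$ sums to zero over $\F_q^\times$.

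\emph{Step 2: the conjugate.} Since $\eta$ is real-valued and $\overline{\chi(x)}=\chi(-x)$, we have
\[
\overline{G(\eta,\chi)}=\sum_{x\in\F_q^\times}\eta(x)\chi(-x)=G_{-1}(\eta,\chi)=\eta(-1)G(\eta,\chi),
\]
where the last equality is \eqref{equation:general_gauss_sums} with $a=-1$.

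\emph{Conclusion.} Combining the two steps,
\[
q=G(\eta,\chi)\overline{G(\eta,\chi)}=\eta(-1)G^2(\eta,\chi).
\]
Multiplying both sides by $\eta(-1)$ and using $\eta(-1)^2=1$ gives $G^2(\eta,\chi)=q\eta(-1)$.

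The only step needing care is the change of variables in Step~1. It works because $y\neq0$, and because the term $t=1$ must be isolated before the inner sum is evaluated.
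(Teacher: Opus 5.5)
Your proof is correct, but it is organized differently from the paper's. The paper squares the Gauss sum directly. In $G^2(\eta,\chi)=\sum_{x,y\in\F_q^\times}\eta(xy)\chi(x+y)$, the substitution $x=zy$ turns $\eta(xy)$ into $\eta(zy^2)=\eta(z)$. Additive orthogonality then isolates the single term $z=-1$, which produces both the factor $q$ and the sign $\eta(-1)$ in one computation.

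You instead multiply $G(\eta,\chi)$ by its conjugate. The analogous substitution then isolates $t=1$ and yields only $\absol{G(\eta,\chi)}^2=q$. You recover the sign from the separate identity $\overline{G(\eta,\chi)}=\eta(-1)G(\eta,\chi)$, which is \eqref{equation:general_gauss_sums} with $a=-1$.

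The paper's route is shorter, but it uses $\eta^2=1$ in the substitution step, so it only works for the quadratic character. Your route is the standard textbook decomposition and keeps the two ingredients apart. Read with $\overline{\psi(y)}=\psi(y^{-1})$ for an arbitrary nontrivial multiplicative character $\psi$, your Step 1 gives $\absol{G(\psi,\chi)}^2=q$ verbatim. Only your Step 2 uses that $\eta$ is real-valued, which is exactly where the statement becomes specific to $\eta$.
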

	\begin{proof}
		Starting in the left hand side:
		\begin{equation*}
			\begin{split}
				G^2(\eta,\chi)
				&=
				\sum_{x,y\in\F_q^{\times}}\eta(xy)\chi(x+y)
				\underset{z=xy^{-1}}{=}
				\sum_{z,y\in\F_q^{\times}}\eta(z)\chi((z+1)y)
				\\&=
				\sum_{z\in\F_q^{\times}}\eta(z)\left(\sum_{y\in\F_q}\chi((z+1)y)-1\right)
				=
				\sum_{z\in\F_q^{\times}}\eta(z)\sum_{y\in\F_q}\chi((z+1)y),
			\end{split}
		\end{equation*}
		where we used the fact that $\sum_{z\in\F_q^{\times}}\eta(z)=0$. Finally, from \eqref{equation:nonprincipal_character_sum}, we have that the inner sum equals $0$ if $z\neq-1$ and $q$ if $z=-1$. Hence, we have that $G^2(\eta,\chi)=q\eta(-1)$.
	\end{proof}
	
	Another sum that will appear frequently is the quadratic sum, for which we state the following formula, omitting its proof:
	\begin{lemma}[Theorem 5.30 in \cite{LiNi97}]\label{theorem:quadratic_gauss}
		In the same conditions as Lemma \ref{theorem:square-gauss-sum_Fq},
		\begin{equation*}
			\sum_{y\in\F_q}\chi(ty^2)=\eta(t)G(\eta,\chi)+q\delta_{0}(t) \text{ for any } t\in\F_q,
		\end{equation*}
		where we are using the convention $\eta(0):=0$.
	\end{lemma}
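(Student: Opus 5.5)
The plan is to expand the square $\bigl(\sum_{y\in\F_q}\chi(ty^2)\bigr)$ in a way that reduces it to the gaussian sum, splitting into the cases $t=0$ and $t\neq0$.

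For $t=0$, every term equals $\chi(0)=1$, so the sum is $q$. Since $\eta(0)=0$ and $\delta_0(0)=1$, the right-hand side is also $q$, which settles this case.

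For $t\neq0$, we group the terms according to the value $x=y^2$. The value $x=0$ occurs only for $y=0$ and contributes $1$. Each $x\in\mathcal{Q}$ has exactly two square roots, and each $x\in\tilde{\mathcal{Q}}$ has none. Hence the number of $y$ with $y^2=x$ equals $1+\eta(x)$ for every $x\in\F_q$, including $x=0$. This gives
\begin{equation*}
\sum_{y\in\F_q}\chi(ty^2)=\sum_{x\in\F_q}\bigl(1+\eta(x)\bigr)\chi(tx)=\sum_{x\in\F_q}\chi(tx)+\sum_{x\in\F_q^\times}\eta(x)\chi(tx).
\end{equation*}
The first sum vanishes by \eqref{equation:nonprincipal_character_sum}, because $x\mapsto\chi(tx)$ is again a nonprincipal additive character when $t\neq0$. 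The second sum is $G_t(\eta,\chi)=\eta(t)G(\eta,\chi)$ by \eqref{equation:general_gauss_sums}. Since $\delta_0(t)=0$, this matches the stated formula.

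There is no real obstacle. The only point needing care is the counting identity $\#\{y:y^2=x\}=1+\eta(x)$. This uses that $q$ is odd, so that $y\neq -y$ for $y\neq0$, together with the convention $\eta(0)=0$.
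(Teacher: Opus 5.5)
Your proof is correct. The case $t=0$ is immediate, and for $t\neq0$ the identity $\absol{\{y\in\F_q:y^2=x\}}=1+\eta(x)$ (valid for every $x$, since $q$ is odd and $\eta(0)=0$) turns the sum into $\sum_{x\in\F_q}\chi(tx)+G_t(\eta,\chi)=\eta(t)G(\eta,\chi)$ via \eqref{equation:nonprincipal_character_sum} and \eqref{equation:general_gauss_sums}. The paper gives no proof of its own; it cites \cite[Theorem 5.30]{LiNi97}, and your root-counting argument is essentially the standard proof of that result. One small point: your opening phrase ``expand the square'' is a misnomer, since nothing is squared; you are regrouping the sum by the value of $y^2$.
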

	
	We finish this section with an identity that will be used throughout the rest of the paper.
	\begin{lemma}\label{lemma:quadratic_gauss_sums_ax^2+bx}
		Let $a\in\F_q^{\times}$ and $b\in\F_q$. Then,
		\begin{equation*}
			\sum_{x\in\F_q}\chi(ax^2+bx)
			=
			\eta(a)G(\eta,\chi)\chi\left(-\frac{b^2}{4a}\right).
		\end{equation*}
	\end{lemma}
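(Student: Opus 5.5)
The plan is to complete the square in the exponent and then reduce to the quadratic sum of Lemma \ref{theorem:quadratic_gauss}. Since $q$ is odd, $2$ and $4a$ are invertible in $\F_q$, so we may write
\begin{equation*}
	ax^2+bx
	=
	a\left(x+\frac{b}{2a}\right)^2-\frac{b^2}{4a}.
\end{equation*}
Because $\chi$ is additive, $\chi(ax^2+bx)=\chi\bigl(a(x+\tfrac{b}{2a})^2\bigr)\chi\bigl(-\tfrac{b^2}{4a}\bigr)$. The second factor does not depend on $x$, so it can be pulled out of the sum.

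Next, the translation $y=x+\frac{b}{2a}$ is a bijection of $\F_q$. Hence
\begin{equation*}
	\sum_{x\in\F_q}\chi(ax^2+bx)
	=
	\chi\left(-\frac{b^2}{4a}\right)\sum_{y\in\F_q}\chi(ay^2).
\end{equation*}
We then apply Lemma \ref{theorem:quadratic_gauss} with $t=a$. Since $a\neq0$, the term $q\delta_0(a)$ vanishes and the inner sum equals $\eta(a)G(\eta,\chi)$. This gives the claimed identity.

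There is no real obstacle here. The only points to check are that the division by $2a$ and $4a$ is legitimate, which holds because the characteristic is odd and $a\in\F_q^\times$, and that $a\neq0$ is exactly what removes the $\delta_0$ term in Lemma \ref{theorem:quadratic_gauss}.

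If one wanted to avoid citing Lemma \ref{theorem:quadratic_gauss}, one could prove $\sum_{y}\chi(ay^2)=\eta(a)G(\eta,\chi)$ directly. Count solutions of $y^2=u$ as $1+\eta(u)$ to get
\begin{equation*}
	\sum_{y}\chi(ay^2)=\sum_{u}(1+\eta(u))\chi(au).
\end{equation*}
Then use \eqref{equation:nonprincipal_character_sum} to kill the sum of $\chi(au)$, and \eqref{equation:general_gauss_sums} to identify the remaining sum with $\eta(a)G(\eta,\chi)$.
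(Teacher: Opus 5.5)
Your proposal is correct and follows the same route as the paper: complete the square, translate by $\frac{b}{2a}$, and apply Lemma \ref{theorem:quadratic_gauss} with $t=a\neq0$. Your optional direct evaluation of $\sum_{y}\chi(ay^2)$, counting solutions of $y^2=u$ as $1+\eta(u)$, is also valid and makes the argument self-contained.
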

	\begin{proof}
		Completing the square and changing variables we have that
		\begin{equation*}
			\begin{split}
				\sum_{x\in\F_q}\chi(ax^2+bx)
				&=
				\chi\left(-\frac{b^2}{4a}\right)\sum_{x\in\F_q}\chi\left(a\left(x+\frac{b}{2a}\right)^2\right)
				\\&=
				\chi\left(-\frac{b^2}{4a}\right)\sum_{y\in\F_q}\chi\left(ay^2\right)
				=
				\eta(a)G(\eta,\chi)\chi\left(-\frac{b^2}{4a}\right),
			\end{split}
		\end{equation*}
		where in the last equality we used Lemma \ref{theorem:quadratic_gauss}.
	\end{proof}
	
	\section{Convolutions}\label{section:convolutions}
	In this section, we compute the two-fold convolution measure of the relevant surface measures. Since the calculations for general dimension $d$ and for $d=3$ are similar, we present the general case. 
	
	The $k$-fold convolution measure of the surface measure of the $(d-1)$-cones was previously calculated in the author's M.Sc.\ thesis \cite[Proposition 3.2]{Ro25}. The same argument can be extended to all non-degenerate quadratic hypersurfaces $\mathcal{S}^{d-1}_0$.
	
	From now on, let $\sigma_j$ be the normalized counting measure on $\mathcal{S}^{d-1}_j$ with $j\in\F_q$. From the discussion in Section \ref{section:fourier}, we will only be interested in non-degenerate quadratic hypersurfaces $\mathcal{S}^{d-1}_j$ defined by a diagonal non-degenerate quadratic form $Q$. We will calculate $\sigma_j\ast\sigma_j$ using Fourier inversion and the intertwining property. Hence, we first find an expression for $(\sigma_j)^\vee$.
	\begin{lemma}[Lemma 4 in \cite{IoKoh10}]
		Let $\mathcal{S}^{d-1}_j\subset\F_q^d$ be as defined in \eqref{equation:def_non-degenerate_surface} by a diagonal non-degenerate quadratic form $Q$ with $j\in\F_q$. Then, for any $\bm{x}\in\F_q^d$, we have that
		\begin{equation}\label{equation:fourier_measure_spheres}
			\absol{\mathcal{S}^{d-1}_j}(\sigma_j)^{\vee}(\bm{x})=q^{d-1}\delta_{\bm{0}}(\bm{x})+q^{-1}\eta(\det(Q))G^d(\eta,\chi)\sum_{r\in\F_q^{\times}}\eta^d(-r)\chi\left(jr+\frac{Q^{-1}(\bm{x})}{4r}\right),
		\end{equation}
		where $Q^{-1}(\bm{x})$ is the non-degenerate quadratic form defined by the inverse matrix of $Q$.
	\end{lemma}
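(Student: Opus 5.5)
The plan is to compute $(\sigma_j)^\vee$ directly from the definition, writing the indicator of $\mathcal{S}^{d-1}_j$ as an exponential sum. By \eqref{equation:fourier_inversion} with $f\equiv1$,
\[
\absol{\mathcal{S}^{d-1}_j}(\sigma_j)^\vee(\bm{x})=\sum_{\bm{\xi}\in\F_q^d}\bm{1}\bigl(Q(\bm{\xi})=j\bigr)\chi(\bm{\xi}\cdot\bm{x}).
\]
Next I detect the condition $Q(\bm{\xi})=j$ by $q^{-1}\sum_{r\in\F_q}\chi\bigl(r(Q(\bm{\xi})-j)\bigr)$, with a sign convention on $r$ chosen to match the stated formula. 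Swapping the order of summation splits the expression into two parts. The $r=0$ term gives $q^{-1}\sum_{\bm{\xi}}\chi(\bm{\xi}\cdot\bm{x})=q^{d-1}\delta_{\bm{0}}(\bm{x})$ by the orthogonality relation of Section \ref{section:exp_sums}. The $r\neq0$ terms give $q^{-1}\sum_{r\neq0}\chi(\mp jr)\sum_{\bm{\xi}}\chi\bigl(\pm rQ(\bm{\xi})+\bm{\xi}\cdot\bm{x}\bigr)$.

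Since $Q=\operatorname{diag}(a_1,\dots,a_d)$ with $a_i\neq0$, the inner sum over $\bm{\xi}$ factors into a product of $d$ one-variable sums $\sum_{\xi_i}\chi(\pm ra_i\xi_i^2+x_i\xi_i)$. Each factor is evaluated by Lemma \ref{lemma:quadratic_gauss_sums_ax^2+bx}, giving $\eta(\pm ra_i)G(\eta,\chi)\chi\bigl(-x_i^2/(\pm 4ra_i)\bigr)$. Multiplying the factors yields
\[
\eta^d(\pm r)\,\eta(a_1\cdots a_d)\,G^d(\eta,\chi)\,\chi\Bigl(\mp\frac{Q^{-1}(\bm{x})}{4r}\Bigr),
\]
since $Q^{-1}(\bm{x})=\sum x_i^2/a_i$ and $\det Q=a_1\cdots a_d$.

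To finish, I use the convention $-r$ in the detector, i.e. $\chi\bigl(-r(Q(\bm{\xi})-j)\bigr)$. The prefactor then becomes $\chi(jr)$, the quadratic coefficient becomes $-ra_i$ (producing $\eta^d(-r)$), and the phase becomes $\chi\bigl(Q^{-1}(\bm{x})/(4r)\bigr)$. Summing over $r\in\F_q^\times$ gives exactly \eqref{equation:fourier_measure_spheres}.

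There is no real obstacle here. The only care needed is sign bookkeeping in the choice of $r\mapsto -r$, and making sure the $r=0$ term is separated before applying Lemma \ref{lemma:quadratic_gauss_sums_ax^2+bx}, since that lemma requires $a\neq0$.
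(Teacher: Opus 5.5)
Your proposal is correct and is essentially the paper's proof: it detects $Q(\bm{\xi})=j$ with $q^{-1}\sum_{r\in\F_q}\chi(r(j-Q(\bm{\xi})))$, separates the $r=0$ term to get $q^{d-1}\delta_{\bm{0}}(\bm{x})$, and evaluates the remaining sum over $\bm{\xi}$ coordinatewise with Lemma~\ref{lemma:quadratic_gauss_sums_ax^2+bx}. This produces $\eta^d(-r)\eta(\det(Q))G^d(\eta,\chi)\chi\bigl(Q^{-1}(\bm{x})/(4r)\bigr)$. Your sign bookkeeping is right, and the choice of convention is immaterial anyway, since $r\mapsto -r$ permutes $\F_q^\times$.
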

	\begin{proof}
		Taking $f=1$ in \eqref{equation:fourier_inversion}, and by the delta function property,
		\begin{equation*}
			\begin{split}
				\absol{\mathcal{S}^{d-1}_j}(\sigma_j)^{\vee}(\bm{x})
				&=
				\sum_{\bm{\xi}\in\mathcal{S}^{d-1}_j}\chi(\bm{x}\cdot\bm{\xi})
				=
				q^{-1}\sum_{\bm{\xi}\in\F_q^d}\chi(\bm{x}\cdot\bm{\xi})\sum_{r\in\F_q}\chi(r(j-Q(\bm{\xi})))
				\\&=
				q^{d-1}\delta_{\bm{0}}(\bm{x})
				+
				q^{-1}\sum_{r\in\F_q^\times}\chi(jr)\sum_{\bm{\xi}\in\F_q^d}\chi(-rQ(\bm{\xi})+\bm{x}\cdot\bm{\xi}).
			\end{split}
		\end{equation*}
		An application of Lemma \ref{lemma:quadratic_gauss_sums_ax^2+bx} to each coordinate of $\bm{\xi}\in\F_q^d$ shows that the previous expression equals
		\begin{equation*}
			q^{d-1}\delta_{\bm{0}}(\bm{x})
			+
			q^{-1}\sum_{r\in\F_q^\times}\chi(jr)\prod_{i=1}^d\eta(-ra_i)G(\eta,\chi)\chi\left(\frac{x_i^2}{4ra_i}\right).
		\end{equation*}
		Rearranging the terms in the last sum, using the fact that $\eta$ is a multiplicative character, $\det(Q)=\prod_{i=1}^{d}a_i$, and $\sum_{i=1}^{d}a_i^{-1}x_i^2=Q^{-1}(\bm{x})$ finishes the proof.
	\end{proof}
	
	By \eqref{equation:fourier_inversion} it follows that $(\sigma_j)^{\vee}(\bm{0})=1$. Hence,
	\begin{equation}\label{equation:cardinality_d-1_sphere}
		\absol{\mathcal{S}^{d-1}_j}=q^{d-1}+q^{-1}\eta(\det(Q))G^d(\eta,\chi)\sum_{r\in\F_q^{\times}}\eta^d(-r)\chi\left(jr\right).
	\end{equation}
	From here, we can see that $\absol{\mathcal{S}^{d-1}_j}\approx q^{d-1}$, except when $d=1$, $j=0$ and $\eta(-1)=-1$, which explains the use of the superscript $d-1$. This remains true even if $Q$ is not diagonal, since any nonsingular linear change of variables is a bijection over $\F_q^d$. 
	
	The main difficulty in computing the two-fold convolution is the presence of the last sum in \eqref{equation:fourier_measure_spheres}, which is called a Kloosterman sum when $d$ is even, and a twisted Kloosterman sum when $d$ is odd (check \cite[\S 5.5]{LiNi97}). The absolute value of these sums is known to be less than or equal to $2q^{1/2}$, but there are no exact formulas as there are for gaussian sums. Moreover, this added difficulty is the reason why we do not compute the $k$-fold convolution for any $k$ and solely focus on the case $k=2$. The main result of this section is:
	\begin{proposition}\label{proposition:two-fold-conv-sphere_even-odd-d}
		Let $d,n\in\N$, $q=p^n$ with $p$ an odd prime, $j\in\F_q$ and $\sigma_j$ denote the normalized surface measure on the non-degenerate quadratic hypersurface $\mathcal{S}^{d-1}_j$ defined by the diagonal non-degenerate quadratic form $Q$ as in \eqref{equation:def_non-degenerate_surface}. Then, when $d$ is even, the two-fold convolution measure of $\sigma_j$ is given by
		\begin{equation*}
			\begin{split}
				\absol{\mathcal{S}^{d-1}_j}^2&(\sigma_j\ast\sigma_j)(\bm{\xi})
				=
				q^{2(d-1)}(1+(q-1)\delta_{\bm{0}}(\bm{\xi}))
				+\\&+
				q^{\frac{3d-2}{2}}\eta^{\frac{d}{2}}(-1)\eta(\det(Q))
				\left[\eta(Q(\bm{\xi}))\eta(Q(\bm{\xi})-4j)+\bm{1}_{\mathcal{S}^{d-1}_0}(\bm{\xi})(q\delta_0(j)-1)\right],
			\end{split}
		\end{equation*}
		where
		$\absol{\mathcal{S}^{d-1}_j}=q^{d-1}+q^{\frac{d-2}{2}}\eta^{\frac{d}{2}}(-1)\eta(\det(Q))(q\delta_{0}(j)-1)$.
		For odd $d$, we have that
		\begin{equation*}
			\begin{split}
				\absol{\mathcal{S}^{d-1}_j}^2(\sigma_j\ast\sigma_j)(\bm{\xi})&=
				q^{2(d-1)}(1+(q-1)\delta_{\bm{0}}(\bm{\xi}))
				+\\&+
				q^{\frac{3(d-1)}{2}}\eta^\frac{d-1}{2}(-1)\eta(\det(Q))\left[\eta(Q(\bm{\xi}))(q\bm{1}_{\mathcal{S}^{d-1}_{4j}}(\bm{\xi})-1)+q\bm{1}_{\mathcal{S}^{d-1}_0}(\bm{\xi})\eta(j)\right],
			\end{split}
		\end{equation*}
		where
		$\absol{\mathcal{S}^{d-1}_j}=q^{d-1}+q^{\frac{d-1}{2}}\eta^{\frac{d-1}{2}}(-1)\eta(j\det(Q))$.
	\end{proposition}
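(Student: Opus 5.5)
The plan is to compute the convolution through its Fourier transform: $\widehat{\sigma_j\ast\sigma_j}=\big((\sigma_j)^\vee\big)^2$ up to normalization, then apply Fourier inversion. With the paper's conventions,
\[
\absol{\mathcal{S}^{d-1}_j}^2(\sigma_j\ast\sigma_j)(\bm{\xi})=q^{-d}\sum_{\bm{x}\in\F_q^d}\Big(\absol{\mathcal{S}^{d-1}_j}(\sigma_j)^\vee(\bm{x})\Big)^2\chi(-\bm{x}\cdot\bm{\xi})\cdot q^{2(d-1)}\cdot q^{-2(d-1)}\cdot q^{d}/q^{d},
\]
and the exact constant will be fixed by checking the total mass $\sum_{\bm{\xi}}=\absol{\mathcal{S}^{d-1}_j}^2$. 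Equivalently, I will count directly
\[
\absol{\Sigma^2(\bm{\xi})}=q^{-d}\sum_{\bm{x}}\Big(\sum_{\bm{\zeta}\in\mathcal{S}}\chi(\bm{x}\cdot\bm{\zeta})\Big)^2\chi(-\bm{x}\cdot\bm{\xi}).
\]
I then substitute formula \eqref{equation:fourier_measure_spheres}, writing $\absol{\mathcal{S}}(\sigma_j)^\vee=q^{d-1}\delta_{\bm{0}}+K(\bm{x})$. The square splits into three pieces. The first is $q^{2(d-1)}\delta_{\bm{0}}$, which contributes the constant $q^{2(d-1)}$ after inversion, up to the normalization factor. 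The second is a cross term $2q^{d-1}\delta_{\bm 0}K(\bm 0)$, which is a constant. The third is $K(\bm{x})^2$, which involves a double sum over $r,s\in\F_q^\times$.

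For the $K^2$ term, I write
\[
K(\bm x)^2=q^{-2}G^{2d}\sum_{r,s}\eta^d(rs)\,\chi\Big(j(r+s)+\tfrac{Q^{-1}(\bm x)}{4}\big(\tfrac1r+\tfrac1s\big)\Big),
\]
using $\eta(\det Q)^2=1$. I then sum over $\bm{x}$ against $\chi(-\bm{x}\cdot\bm{\xi})$. When $r+s\neq0$, Lemma \ref{lemma:quadratic_gauss_sums_ax^2+bx} applies coordinatewise with $a=a_i^{-1}(r+s)/(4rs)$, giving
\[
\eta(\det Q)\,\eta^d\!\Big(\tfrac{r+s}{4rs}\Big)G^d\,\chi\Big(-\tfrac{rs\,Q(\bm\xi)}{r+s}\Big).
\]
When $s=-r$, the $\bm{x}$-sum is $q^d\delta_{\bm 0}(\bm\xi)$ times $\eta^d(-r^2)$. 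This $\delta_{\bm{0}}$ piece should produce the $(q-1)\delta_{\bm 0}(\bm\xi)$ correction. The total is then $G^{3d}$, and Lemma \ref{theorem:square-gauss-sum_Fq} reduces it to powers of $q$ times $\eta(-1)^{\lfloor\cdot\rfloor}G^{d\bmod 2}$.

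The main obstacle is the remaining twisted sum. I handle it by reparametrizing with $u=r+s\neq0$ and $v=rs/(r+s)$. Since $r+s=u$ and $rs=uv$, the exponent becomes $ju-vQ(\bm\xi)$, and the weight is $\eta^d(rs)\eta^d(u/rs)=\eta^d(u)$. So the sum is
\[
\sum_{u\neq0}\eta^d(u)\chi(ju)\sum_{v}N(u,v)\,\chi(-vQ(\bm\xi)),
\]
where $N(u,v)$ counts ordered pairs $(r,s)$ that are roots of $t^2-ut+uv$ with $rs\neq0$. This count is $1+\eta(u^2-4uv)$ for $v\neq0$. The "$1$" part yields gaussian-type sums in $u$. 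The $\eta(u(u-4v))$ part yields, after the substitution $v=uw$, the factor $\eta(1-4w)$. Summing over $u$ then gives the condition $j=wQ(\bm\xi)$, or a Gauss sum in $u$ when $d$ is odd.

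For even $d$, the resulting sum $\sum_w\eta(1-4w)\chi(\cdots)$ evaluates via $\sum_{u}\chi(u(j-wQ))$ to $q\,\mathbf 1$ minus $1$. It collapses to $\eta(Q(\bm\xi))\eta(Q(\bm\xi)-4j)$, with the special case $Q(\bm\xi)=0$ producing the $\mathbf 1_{\mathcal{S}_0}(q\delta_0(j)-1)$ term. For odd $d$, the extra $\eta(u)$ gives $\eta(j-wQ(\bm\xi))G$, and $\sum_w\eta(1-4w)\eta(j-wQ)$ is a Jacobi-type sum of two linear factors. This sum equals $-\eta(-4Q)$, or $(q-1)\eta(\cdot)$ when the two lines are proportional, that is when $Q(\bm\xi)=4j$. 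This produces the $\eta(Q(\bm\xi))(q\mathbf 1_{\mathcal{S}_{4j}}-1)$ term, and the $Q(\bm\xi)=0$ case produces $q\mathbf 1_{\mathcal{S}_0}\eta(j)$.

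Finally, I collect the constants and cross-check by evaluating at $\bm\xi=\bm 0$, which must give $\absol{\mathcal{S}}$. The formula for $\absol{\mathcal{S}}$ itself comes from \eqref{equation:cardinality_d-1_sphere}: the sum $\sum_r\eta^d(-r)\chi(jr)$ equals $q\delta_0(j)-1$ for even $d$, and $\eta(-j)\cdot\eta(-1)G$-type for odd $d$. The delicate bookkeeping is the edge cases $Q(\bm\xi)=0$, $\bm\xi=\bm 0$, and $v=0$.
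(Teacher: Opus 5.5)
Your route works, and at the key step it differs from the paper's. Both arguments use Fourier inversion and split off $r+s=0$, which gives the $(q-1)\delta_{\bm 0}(\bm\xi)$ term. Both then reduce the rest to
\[
T(\bm\xi)=\sum_{\substack{r,s\in\F_q^\times\\ r+s\neq0}}\eta^d(r+s)\,\chi\Bigl(j(r+s)-\frac{rs\,Q(\bm\xi)}{r+s}\Bigr).
\]
The paper fixes $u=r+s$ and evaluates the inner sum $\sum_{r}\chi\bigl(Q(\bm\xi)(r^2/u-r)\bigr)$ by completing the square. This pulls out $\eta(Q(\bm\xi))G(\eta,\chi)$ at once and leaves the single Gauss-type sum $\sum_u\eta^{d-1}(u)\chi((j-Q(\bm\xi)/4)u)$. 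The result is the parity-free Proposition~\ref{proposition:two-fold-conv-sphere_general-d}, using only the lemmas of Section 2.

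Your pushforward along $(r,s)\mapsto(u,v)$ handles the same inner sum, but by counting roots of $r^2-ur+uv$ instead of completing the square. You then sum over $u$ first and over $w=v/u$ last. For even $d$ this avoids Gauss sums altogether and makes $\eta(Q(\bm\xi))\eta(Q(\bm\xi)-4j)=\eta(1-4j/Q(\bm\xi))$ transparent. For odd $d$ it needs one input the paper does not state: the evaluation of $\sum_{w\in\F_q}\eta(aw^2+bw+c)$ (Lidl--Niederreiter, Theorem 5.48). In exchange, it explains the special role of $\mathcal{S}^{d-1}_{4j}$ as the degenerate case where the two linear factors are proportional.

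Three details need repair.

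\emph{(i) Normalization.} With the paper's conventions,
$\absol{\mathcal{S}^{d-1}_j}^2(\sigma_j\ast\sigma_j)(\bm\xi)=\sum_{\bm x}\bigl(\absol{\mathcal{S}^{d-1}_j}(\sigma_j)^\vee(\bm x)\bigr)^2\chi(-\bm x\cdot\bm\xi)=q^d\absol{\Sigma^2(\bm\xi)}$, cf.~\eqref{equation:conv_relation}. Your first display carries a spurious $q^{-d}$. Your calibrations should therefore read: total mass $q^d\absol{\mathcal{S}^{d-1}_j}^2$, and value $q^d\absol{\mathcal{S}^{d-1}_j}$ at $\bm\xi=\bm 0$. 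As written, you would end a factor $q^d$ away from the stated formula.

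\emph{(ii) Signs.} The quadratic $(1-4w)(j-wQ(\bm\xi))$ has leading coefficient $4Q(\bm\xi)$. So for $Q(\bm\xi)\notin\{0,4j\}$ the full sum is $-\eta(Q(\bm\xi))$, not $-\eta(-4Q(\bm\xi))$. With your value the generic term picks up a stray $\eta(-1)$ and fails to match $\eta(Q(\bm\xi))(q\bm 1_{\mathcal{S}^{d-1}_{4j}}(\bm\xi)-1)$ when $q\equiv 3\pmod 4$. Likewise $\sum_r\eta(-r)\chi(jr)=\eta(-j)G(\eta,\chi)$, with no further $\eta(-1)$.

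\emph{(iii) Bookkeeping of $v=0$.} This is cleanest if you write $N(u,v)=1+\eta(u^2-4uv)-2\delta(v)$ for all $v\in\F_q$. Then
\[
T(\bm\xi)=\bigl(q\bm 1_{\mathcal{S}^{d-1}_0}(\bm\xi)-2\bigr)\sum_{u\in\F_q^\times}\eta^d(u)\chi(ju)+\sum_{w\in\F_q}\eta(1-4w)\sum_{u\in\F_q^\times}\eta^d(u)\chi\bigl(u(j-wQ(\bm\xi))\bigr).
\]
The $-2$ part is exactly what cancels your cross term $2q^{d-1}\delta_{\bm 0}K(\bm 0)$, a cancellation your outline does not mention. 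The last sum equals $q\,\eta(Q(\bm\xi))\eta(Q(\bm\xi)-4j)$ for even $d$ and $G(\eta,\chi)\eta(Q(\bm\xi))(q\bm 1_{\mathcal{S}^{d-1}_{4j}}(\bm\xi)-1)$ for odd $d$, uniformly in $\bm\xi$.

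Restricting to $w\neq0$, as you do, also works, but less cleanly. When $j=0\neq Q(\bm\xi)$, the solution $w=j/Q(\bm\xi)=0$ is discarded. The value $\eta(Q)\eta(Q-4j)=1$ then only reappears after recombining with the other pieces, so your claim that the $\eta$-part alone \emph{collapses} to it is not literally true.
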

	Computing the expressions for $\absol{\mathcal{S}^{d-1}_j}$ is just a matter of applying the results from Section~\ref{section:exp_sums} to \eqref{equation:cardinality_d-1_sphere}. From this, one can see that
	\begin{equation*}
		\absol{\mathcal{S}^2_j}=q^2+q\eta(-j\det(Q)).
	\end{equation*}
	Hence, $\absol{\mathcal{S}^2_j}=q^2-q$ when $-j\det(Q)\neq0$ is not a square. Furthermore, we have that $\absol{\mathcal{S}_\times^2}=\absol{\mathcal{S}^2_0}-1=q^2-1$.
	
	We will first show the following result, and only then consider even and odd $d$ separately.
	\begin{proposition}\label{proposition:two-fold-conv-sphere_general-d}
		Let $d,n\in\N$, $q=p^n$ with $p$ an odd prime, $j\in\F_q$ and $\sigma_j$ denote the normalized surface measure on the non-degenerate quadratic hypersurface $\mathcal{S}^{d-1}_j$ defined by the diagonal non-degenerate quadratic form $Q$ as in \eqref{equation:def_non-degenerate_surface}. Then, the two-fold convolution measure of $\sigma_j$ is
		\begin{equation*}
			\begin{split}
				\absol{\mathcal{S}^{d-1}_j}^2&(\sigma_j\ast\sigma_j)(\bm{\xi})
				=
				q^{2(d-1)}(1+(q-1)\delta_{\bm{0}}(\bm{\xi}))
				+
				q^{-2}G^{3d}(\eta,\chi)\eta(\det(Q))
				\times\\\times
				&\left[\eta(Q(\bm{\xi}))G(\eta,\chi)\sum_{r\in\F_q^{\times}}\eta^{d-1}(r)\chi\left(\left(j-\frac{Q(\bm{\xi})}{4}\right)r\right)+q\bm{1}_{\mathcal{S}^{d-1}_0}\left(\bm{\xi}\right)\sum_{r\in\F_q^{\times}}\eta^d(r)\chi(jr)\right].
			\end{split}
		\end{equation*}
	\end{proposition}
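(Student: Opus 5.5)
We compute $\sigma_j\ast\sigma_j$ through the Fourier side. Since $(\sigma_j\ast\sigma_j)^\vee=((\sigma_j)^\vee)^2$ up to the normalization conventions of \eqref{equation:fourier_inversion}, Fourier inversion gives
\begin{equation*}
\absol{\mathcal{S}^{d-1}_j}^2(\sigma_j\ast\sigma_j)(\bm{\xi})
=
q^{-d}\sum_{\bm{x}\in\F_q^d}\Bigl(\absol{\mathcal{S}^{d-1}_j}(\sigma_j)^{\vee}(\bm{x})\Bigr)^2\chi(-\bm{x}\cdot\bm{\xi}),
\end{equation*}
with the normalization constant to be checked against \eqref{equation:conv_relation}. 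We then insert \eqref{equation:fourier_measure_spheres} and expand the square into three pieces: the $\delta_{\bm 0}$--$\delta_{\bm 0}$ term, the two cross terms, and the product of the two Kloosterman-type sums.

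The $\delta$--$\delta$ term gives $q^{2(d-1)}$. In each cross term the delta sets $\bm{x}=\bm 0$, so it equals $2q^{-2}\eta(\det Q)G^d\sum_r\eta^d(-r)\chi(jr)$, which by \eqref{equation:cardinality_d-1_sphere} is $2q^{d-1}(\absol{\mathcal{S}^{d-1}_j}-q^{d-1})$. For the main term we have $\eta(\det Q)^2=1$ and $\eta^{2d}=1$ on $\F_q^\times$. It becomes
\begin{equation*}
q^{-d-2}G^{2d}\sum_{r,s\in\F_q^\times}\eta^d(rs)\chi(j(r+s))\sum_{\bm{x}}\chi\Bigl(\tfrac{Q^{-1}(\bm{x})}{4}\bigl(\tfrac1r+\tfrac1s\bigr)-\bm{x}\cdot\bm{\xi}\Bigr).
\end{equation*}
We split according to whether $s=-r$. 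If $s=-r$, the inner sum is $q^d\delta_{\bm 0}(\bm\xi)$, and the $r$-sum gives $(q-1)$ times a factor $\eta^d(-1)G^{2d}q^{-2}=q^{d-2}$. This is the $(q-1)\delta_{\bm 0}$ term, up to a correction that should cancel against the $\bm{x}=\bm 0$ contributions. If $s\neq-r$, we apply Lemma \ref{lemma:quadratic_gauss_sums_ax^2+bx} coordinatewise with $a=a_i^{-1}t/4$, where $t=\frac{r+s}{rs}$. This produces $\eta(\det Q)\,\eta^d(t)G^d\,\chi\bigl(-Q(\bm\xi)/t\bigr)$, using $\eta(4)=1$ and $\sum_i a_i\xi_i^2=Q(\bm\xi)$.

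Next we reparametrize the double sum over $(r,s)$ with $r+s\ne0$. A natural choice is $u=r+s\neq 0$ and $t$ as above, so that $rs=u/t$ and $\eta^d(rs)\eta^d(t)=\eta^d(u)$. The constraint is that $r,s$ are the roots of $X^2-uX+u/t$, so the number of pairs $(r,s)$ with given $(u,t)$ is $1+\eta(u^2-4u/t)=1+\eta(u)\eta(u-4/t)$. The term with the constant $1$ yields $\sum_u\eta^d(u)\chi(ju)\sum_t\chi(-Q(\bm\xi)/t)$. The inner $t$-sum is $q\bm 1_{\mathcal{S}_0}(\bm\xi)-1$, and the $-1$ part should cancel the cross terms. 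The term with $\eta(u)\eta(u-4/t)$ is handled with $v=1/t$ and a substitution such as $w=u-4v$, or by writing the $\eta$ factor as a Gauss sum via $\eta(y)=G^{-1}\sum_z\eta(z)\chi(zy)$. Either way it should collapse, after one orthogonality step, to $\eta(Q(\bm\xi))G\sum_r\eta^{d-1}(r)\chi((j-Q(\bm\xi)/4)r)$, using \eqref{equation:general_gauss_sums}. The constants are matched via Lemma \ref{theorem:square-gauss-sum_Fq}, writing $G^{2d}$ in terms of $G^{3d}$ and powers of $q$ and $\eta(-1)$.

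The main obstacle is this last reduction: turning the twisted sum over $(r,s)$ with the factor $\eta(u)\eta(u-4v)$ into a single twisted Kloosterman-type sum in $r$ with argument $(j-Q(\bm\xi)/4)r$. We will also have to check carefully that all the boundary contributions ($s=-r$, $\bm{x}=\bm 0$, and the cross terms) cancel, leaving exactly $q^{2(d-1)}(1+(q-1)\delta_{\bm 0})$.
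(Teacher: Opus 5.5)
Your plan works, and once the bookkeeping errors below are fixed it proves the proposition. It shares the paper's skeleton: expand the square of \eqref{equation:fourier_measure_spheres}, isolate $r+s=0$, and evaluate the $\bm{x}$-sum coordinatewise. It differs in how it handles the remaining sum $\sum_{r+s\neq0}\eta^d(r+s)\chi\bigl(j(r+s)-Q(\bm{\xi})rs/(r+s)\bigr)$.

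The paper fixes $r$, re-inserts $s=0$, and substitutes $u=s+r$, so the phase becomes $ju-Q(\bm{\xi})r+Q(\bm{\xi})r^2/u$. It then swaps the sums, re-inserts $r=0$, and completes the square in $r$ via Lemma \ref{theorem:quadratic_gauss}. The two re-insertions produce $-2\sum_u\eta^d(u)\chi(ju)$, and the $\delta$-term of the lemma produces $q\bm{1}_{\mathcal{S}^{d-1}_0}(\bm{\xi})$.

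You instead parametrize by $u=r+s$ and $t=1/r+1/s$, and count fibres by the discriminant, $1+\eta(u)\eta(u-4/t)$. This is the solution-counting counterpart of that quadratic Gauss sum. Then $q\bm{1}_{\mathcal{S}^{d-1}_0}(\bm{\xi})$ comes from orthogonality in $t$, and the twisted piece needs only \eqref{equation:general_gauss_sums}. Your route puts the arithmetic into an elementary fibre count and uses only linear-phase character sums afterwards. The paper's route is more mechanical and reuses the completing-the-square tool from the rest of the paper.

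The step you call the main obstacle takes one line with the substitution you suggest. With $v=1/t$ and $w=u-4v$,
\[
\sum_{v\in\F_q^\times}\eta(u-4v)\chi(-Q(\bm{\xi})v)
=\chi\Bigl(-\tfrac{Q(\bm{\xi})u}{4}\Bigr)\sum_{w\in\F_q\setminus\{u\}}\eta(w)\chi\Bigl(\tfrac{Q(\bm{\xi})w}{4}\Bigr)
=\eta(Q(\bm{\xi}))G(\eta,\chi)\chi\Bigl(-\tfrac{Q(\bm{\xi})u}{4}\Bigr)-\eta(u).
\]
Multiply by $\eta^{d+1}(u)\chi(ju)$ and sum over $u$. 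This gives $\eta(Q(\bm{\xi}))G(\eta,\chi)\sum_u\eta^{d-1}(u)\chi((j-Q(\bm{\xi})/4)u)$ \emph{minus} $\sum_u\eta^d(u)\chi(ju)$. So the twisted piece does not collapse to the main term alone. Also, the $-1$ from $\sum_{t\neq0}\chi(-Q(\bm{\xi})/t)=q\bm{1}_{\mathcal{S}^{d-1}_0}(\bm{\xi})-1$ cancels only one of the two cross terms. The extra $-\sum_u\eta^d(u)\chi(ju)$ cancels the other, using $G^{3d}(\eta,\chi)=q^d\eta^d(-1)G^d(\eta,\chi)$.

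Second, there is no $q^{-d}$ in your inversion formula. With the paper's conventions,
\[
\absol{\mathcal{S}^{d-1}_j}^2(\sigma_j\ast\sigma_j)(\bm{\xi})=\sum_{\bm{x}}\bigl(\absol{\mathcal{S}^{d-1}_j}(\sigma_j)^\vee(\bm{x})\bigr)^2\chi(-\bm{x}\cdot\bm{\xi}),
\]
which is $q^d$ times the number of representations $\bm{\xi}=\bm{\zeta}_1+\bm{\zeta}_2$, consistent with \eqref{equation:conv_relation}. Your constants currently mix both normalizations. For instance, $2q^{-2}\eta(\det Q)G^d(\eta,\chi)\sum_r\eta^d(-r)\chi(jr)$ equals $2q^{-1}$ times $\absol{\mathcal{S}^{d-1}_j}-q^{d-1}$, not $2q^{d-1}$ times it.

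With the correct normalization, the $s=-r$ piece is exactly $q^{2(d-1)}(q-1)\delta_{\bm{0}}(\bm{\xi})$ and needs no correction. The $\bm{x}=\bm{0}$ cross terms are cancelled entirely by the two boundary terms coming from $r+s\neq0$.
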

	
	\begin{proof}[Proof of Proposition \ref{proposition:two-fold-conv-sphere_general-d}]
		By \eqref{equation:fourier_measure_spheres}, we have that
		\begin{equation*}
			\begin{split}
				\absol{\mathcal{S}^{d-1}_j}^2(\sigma_j^{\vee})^2(\bm{x})
				=
				q^{2d-2}\delta_{\bm{0}}(\bm{x})
				+
				2q^{d-2}&\delta_{\bm{0}}(\bm{x})\eta(\det(Q))G^d(\eta,\chi)\sum_{r\in\F_q^{\times}}\eta^d(-r)\chi\left(jr\right)
				\\&+
				q^{-2}G^{2d}(\eta,\chi)\left(\sum_{r\in\F_q^{\times}}\eta^d(r)\chi\left(jr+\frac{Q^{-1}(\bm{x})}{4r}\right)\right)^2.
			\end{split}
		\end{equation*}
		The last term can be written as
		\begin{equation*}
			\left(\sum_{r\in\F_q^{\times}}\eta^d(r)\chi\left(jr+\frac{Q^{-1}(\bm{x})}{4r}\right)\right)^2
			=
			\sum_{r,s\in\F_q^{\times}}\eta^d(rs)\chi\left(j(r+s)+\frac{Q^{-1}(\bm{x})(s+r)}{4rs}\right).
		\end{equation*}
		By Fourier inversion and the intertwining property, we have that $\absol{\mathcal{S}^{d-1}_j}^2\left(\sigma_j\ast\sigma_j\right)(\bm{\xi})$ equals
		\begin{equation}\label{equation:2-conv_sphere_begin}
			\begin{split}
				\absol{\mathcal{S}^{d-1}_j}^2&\sum_{\bm{x}\in\F_q^d}(\sigma_j^{\vee})^2(\bm{x})\chi(-\bm{x}\cdot\bm{\xi})
				=
				q^{2d-2}
				+
				2q^{d-2}\eta(\det(Q))G^d(\eta,\chi)\sum_{r\in\F_q^{\times}}\eta^d(-r)\chi\left(jr\right)
				\\&+
				q^{-2}G^{2d}(\eta,\chi)\sum_{\bm{x}\in\F_q^d}\sum_{r,s\in\F_q^{\times}}\eta^d(rs)\chi\left(j(r+s)+\frac{Q^{-1}(\bm{x})(s+r)}{4rs}-\bm{x}\cdot\bm{\xi}\right).
			\end{split}
		\end{equation}
		At this point, we may only simplify the last term, and completing the square is the obvious path. For this, we need to take care of the cases $r+s=0$ and $r+s\neq 0$ separately. For $r+s=0$, the last term simplifies to 
		\begin{equation*}
			\sum_{r\in\F_q^{\times}}\sum_{\bm{x}\in\F_q^d}\eta^d(-r^2)\chi(-\bm{x}\cdot\bm{\xi})
			=
			\sum_{r\in\F_q^{\times}}\eta^d(-1)\sum_{\bm{x}\in\F_q^d}\chi(-\bm{x}\cdot\bm{\xi})
			=
			(q-1)\eta^d(-1)q^d\delta_{\bm{0}}(\bm{\xi}).
		\end{equation*}
		The case $r+s\neq 0$ is a bit longer. Applying Lemma~\ref{lemma:quadratic_gauss_sums_ax^2+bx} to the sum over $\bm{x}\in\F_q^d$,
		\begin{align}
			\sum_{\substack{r,s\in\F_q^{\times} \\ r+s\neq 0}}\eta^d(rs)\chi(j(r+s))&\sum_{\bm{x}\in\F_q^d}\chi\left(\frac{Q^{-1}(\bm{x})(s+r)}{4rs}-\bm{x}\cdot\bm{\xi}\right)
			\nonumber\\=
			&G^d(\eta,\chi)\eta(\det(Q))\sum_{\substack{r,s\in\F_q^{\times} \\ r+s\neq 0}}\eta^d\left(\frac{s+r}{4rs}\right)\eta^d(rs)\chi\left(j(r+s)-\frac{Q(\bm{\xi})rs}{s+r}\right)
			\nonumber\\=
			&G^d(\eta,\chi)\eta(\det(Q))\sum_{\substack{r,s\in\F_q^{\times} \\ r+s\neq 0}}\eta^d(s+r)\chi\left(j(r+s)-\frac{Q(\bm{\xi})rs}{s+r}\right),
			\label{equation:2-conv_sphere_relevant}
		\end{align}
		using the fact that $\eta$ is a multiplicative character.
		This sum looks like a (twisted, if $d$ is odd) Kloosterman sum, which would make our lives much harder. Luckily, we can still simplify it. First, fix $r\in\F_q^{\times}$ and look at the sum over $s\in\F_q^{\times}$ with $s\neq -r$:
		\begin{equation*}
			\begin{split}
				\sum_{\substack{s\in\F_q^{\times} \\ s\neq -r}}\eta^d(s+r)&\chi\left(j(r+s)-\frac{Q(\bm{\xi})rs}{s+r}\right)
				\\&=
				-\eta^d(r)\chi\left(jr\right)+\sum_{\substack{s\in\F_q \\ s\neq -r}}\eta^d(s+r)\chi\left(j(r+s)-\frac{Q(\bm{\xi})rs}{s+r}\right)
				\\&=
				-\eta^d(r)\chi\left(jr\right)+\chi(-Q(\bm{\xi})r)\sum_{u\in\F_q^{\times}}\eta^d(u)\chi\left(ju+\frac{Q(\bm{\xi})r^2}{u}\right),
			\end{split}
		\end{equation*}
		where we used the change of variables $u=s+r$. This then implies that \eqref{equation:2-conv_sphere_relevant} equals
		\begin{equation*}
			\begin{split}
				&\sum_{r\in\F_q^{\times}}\left[\chi(-Q(\bm{\xi})r)\sum_{u\in\F_q^{\times}}\eta^d(u)\chi\left(ju+\frac{Q(\bm{\xi})r^2}{u}\right)\right]-\sum_{r\in\F_q^{\times}}\eta^d(r)\chi\left(jr\right)
				\\=&
				\sum_{u\in\F_q^{\times}}\left[\eta^d(u)\chi(ju)\sum_{r\in\F_q^{\times}}\chi\left(\frac{Q(\bm{\xi})r^2}{u}-Q(\bm{\xi})r\right)\right]-\sum_{u\in\F_q^{\times}}\eta^d(u)\chi\left(ju\right)
				\\=&
				\sum_{u\in\F_q^{\times}}\eta^d(u)\chi(ju)\left[\left(\sum_{r\in\F_q^{\times}}\chi\left(\frac{Q(\bm{\xi})r^2}{u}-Q(\bm{\xi})r\right)\right)-1\right].
			\end{split}
		\end{equation*}
		This is much better than what we had before, as we can now complete the square for $r$,
		\begin{equation*}
			\begin{split}
				\sum_{u\in\F_q^{\times}}\eta^d(u)\chi(ju)&\left[\chi\left(-\frac{Q(\bm{\xi})u}{4}\right)\left(\sum_{r\in\F_q^{\times}}\chi\left(\frac{Q(\bm{\xi})}{u}\left(r-\frac{u}{2}\right)^2\right)\right)-1\right]
				\\=
				&\sum_{u\in\F_q^{\times}}\eta^d(u)\chi(ju)\left[\chi\left(-\frac{Q(\bm{\xi})u}{4}\right)\left(\sum_{r\in\F_q}\chi\left(\frac{Q(\bm{\xi})}{u}\left(r-\frac{u}{2}\right)^2\right)\right)-2\right]
				\\\underset{s=r-\frac{u}{2}}{=}
				&\sum_{u\in\F_q^{\times}}\eta^d(u)\chi(ju)\left[\chi\left(-\frac{Q(\bm{\xi})u}{4}\right)\left(\sum_{s\in\F_q}\chi\left(\frac{Q(\bm{\xi})}{u}s^2\right)\right)-2\right].
			\end{split}
		\end{equation*}
		By applying Lemma~\ref{theorem:quadratic_gauss}, this is equal to 
		\begin{equation*}
			\begin{split}
				&\sum_{u\in\F_q^{\times}}\eta^d(u)\chi(ju)\left[\chi\left(-\frac{Q(\bm{\xi})u}{4}\right)\left(\eta\left(\frac{Q(\bm{\xi})}{u}\right)G(\eta,\chi)+q\delta_{0}\left(\frac{Q(\bm{\xi})}{u}\right)\right)-2\right]
				\\&=
				\eta(Q(\bm{\xi}))G(\eta,\chi)\sum_{u\in\F_q^{\times}}\eta^{d-1}(u)\chi\left(\left(j-\frac{Q(\bm{\xi})}{4}\right)u\right)+\left(q\bm{1}_{\mathcal{S}^{d-1}_0}\left(\bm{\xi}\right)-2\right)\sum_{u\in\F_q^{\times}}\eta^d(u)\chi(ju),
			\end{split}
		\end{equation*}
		where we used the fact that $\frac{Q(\bm{\xi})}{u}=0$ if and only if $Q(\bm{\xi})=0$, as $u\in\F_q^\times$. 
		
		Joining the cases $r+s=0$ and $r+s\neq 0$ we get that the last term in \eqref{equation:2-conv_sphere_begin} equals
		\begin{equation*}
			\begin{split}
				&q^{d-2}(q-1)\eta^d(-1)G^{2d}(\eta,\chi)\delta_{\bm{0}}(\bm{\xi})+q^{-2}G^{3d}(\eta,\chi)\eta(\det(Q))\times
				\\&\times
				\left[\eta(Q(\bm{\xi}))G(\eta,\chi)\sum_{r\in\F_q^{\times}}\eta^{d-1}(r)\chi\left(\left(j-\frac{Q(\bm{\xi})}{4}\right)r\right)+\left(q\bm{1}_{\mathcal{S}^{d-1}_0}\left(\bm{\xi}\right)-2\right)\sum_{r\in\F_q^{\times}}\eta^d(r)\chi(jr)\right].
			\end{split}
		\end{equation*}
		The last term of the previous expression cancels the second term in \eqref{equation:2-conv_sphere_begin}. Indeed, by applying Lemma~\ref{theorem:square-gauss-sum_Fq} to $G^{2d}(\eta,\chi)$, the last term in the previous expression
		\begin{equation*}
			2q^{-2}G^{3d}(\eta,\chi)\eta(\det(Q))\sum_{r\in\F_q^{\times}}\eta^d(r)\chi(jr)
			=
			2q^{d-2}\eta^d(-1)G^d(\eta,\chi)\eta(\det(Q))\sum_{r\in\F_q^{\times}}\eta^d(r)\chi(jr).
		\end{equation*}
		Therefore, the expression for $\absol{\mathbb{S}^{d-1}_j}^2(\sigma_j\ast\sigma_j)(\bm{\xi})$ simplifies to
		\begin{equation*}
			\begin{split}
				q^{2d-2}
				&+
				q^{d-2}(q-1)\eta^d(-1)G^{2d}(\eta,\chi)\delta_{\bm{0}}(\bm{\xi})
				+
				q^{-2}G^{3d}(\eta,\chi)\eta(\det(Q))
				\times\\\times
				&\left[\eta(Q(\bm{\xi}))G(\eta,\chi)\sum_{r\in\F_q^{\times}}\eta^{d-1}(r)\chi\left(\left(j-\frac{Q(\bm{\xi})}{4}\right)r\right)+q\bm{1}_{\mathcal{S}^{d-1}_0}\left(\bm{\xi}\right)\sum_{r\in\F_q^{\times}}\eta^d(r)\chi(jr)\right].
			\end{split}
		\end{equation*}
		Finally, we get our final result by applying Lemma~\ref{theorem:square-gauss-sum_Fq} to the term $G^{2d}(\eta,\chi)$.
	\end{proof}
	
	With the results from Section \ref{section:exp_sums}, one is able to show the final expression for the two-fold convolution measure of $\sigma_j$.
	
	\begin{proof}[Proof of Proposition \ref{proposition:two-fold-conv-sphere_even-odd-d}]
		We start with the case of even $d$. The expression first simplifies to
		\begin{equation*}
			\begin{split}
				\absol{\mathcal{S}^{d-1}_j}^2&(\sigma_j\ast\sigma_j)(\bm{\xi})
				=
				q^{2(d-1)}(1+(q-1)\delta_{\bm{0}}(\bm{\xi}))
				+
				q^{-2}G^{3d}(\eta,\chi)\eta(\det(Q))
				\times\\\times&
				\left[\eta(Q(\bm{\xi}))G(\eta,\chi)\sum_{r\in\F_q^{\times}}\eta(r)\chi\left(r\left(j-\frac{Q(\bm{\xi})}{4}\right)\right)+q\bm{1}_{\mathcal{S}^{d-1}_0}(\bm{\xi})\sum_{r\in\F_q^{\times}}\chi(jr)\right].
			\end{split}
		\end{equation*}
		Using identity \eqref{equation:general_gauss_sums}, Lemma~\ref{theorem:square-gauss-sum_Fq}, and the fact that $\sum_{r\in\F_q^{\times}}\chi(jr)=q\delta_{0}(j)-1$, we are able to get the final result.
		
		For the case of odd $d$, the expression first simplifies to
		\begin{equation*}
			\begin{split}
				\absol{\mathcal{S}^{d-1}_j}^2&(\sigma_j\ast\sigma_j)(\bm{\xi})
				=
				q^{2(d-1)}(1+(q-1)\delta_{\bm{0}}(\bm{\xi}))
				+
				q^{-2}G^{3d}(\eta,\chi)\eta(\det(Q))
				\times\\\times
				&\left[\eta(Q(\bm{\xi}))G(\eta,\chi)\sum_{r\in\F_q^{\times}}\chi\left(\left(j-\frac{Q(\bm{\xi})}{4}\right)r\right)+q\bm{1}_{\mathcal{S}^{d-1}_0}\left(\bm{\xi}\right)\sum_{r\in\F_q^{\times}}\eta(r)\chi(jr)\right].
			\end{split}
		\end{equation*}
		By using the delta function property and \eqref{equation:general_gauss_sums} it further simplifies to
		\begin{equation*}
			\begin{split}
				q^{2(d-1)}&(1+(q-1)\delta_{\bm{0}}(\bm{\xi}))
				+\\&+
				q^{-2}G^{3d+1}(\eta,\chi)\eta(\det(Q))\left[\eta(Q(\bm{\xi}))(q\bm{1}_{\mathcal{S}^{d-1}_{4j}}(\bm{\xi})-1)+q\bm{1}_{\mathcal{S}^{d-1}_0}(\bm{\xi})\eta(j)\right].
			\end{split}
		\end{equation*}
		We can now use Lemma~\ref{theorem:square-gauss-sum_Fq} since $3d+1$ is even, thus finishing the proof.
	\end{proof}
	
	We now focus on $d=3$. By Proposition \ref{proposition:two-fold-conv-sphere_even-odd-d} and \eqref{equation:conv_relation} we have that
	\begin{equation}\label{equation:convolution_spheres}
		\absol{\Sigma^2_{\mathcal{S}^2_j}(\bm{\xi})}
		=
		\begin{cases}
			\absol{\mathcal{S}^2_j}, &\text{ if } \bm{\xi}=\bm{0},\\
			q[1+\eta(-j\det(Q))], &\text{ if } \bm{\xi}\in\mathcal{S}^2_{\times},\\
			q[1+\eta(-j\det(Q))]-\eta(-j\det(Q)), &\text{ if } \bm{\xi}\in\mathcal{S}^2_{4j},\\
			q-\eta(-Q(\bm{\xi})\det(Q)), &\text{ if } Q(\bm{\xi})\neq0,4j.
		\end{cases}
	\end{equation}
	Hence, if $-j\det(Q)\neq0$ is not a square the previous expression simplifies to 
	\begin{equation}\label{equation:convolution_sphere}
		\absol{\Sigma^2_{\mathcal{S}^2_j}(\bm{\xi})}
		=
		\begin{cases}
			\absol{\mathcal{S}^2_j}, &\text{ if } \bm{\xi}=\bm{0},\\
			0, &\text{ if } \bm{\xi}\in\mathcal{S}^2_{\times},\\
			1, &\text{ if } \bm{\xi}\in\mathcal{S}^2_{4j},\\
			q-1, &\text{ if } Q(\bm{\xi})=-t\det(Q) \text{ with } t\in\mathcal{Q},\\
			q+1, &\text{ if } Q(\bm{\xi})=-t\det(Q) \text{ with } t\in\tilde{\mathcal{Q}}^\times,
		\end{cases}
	\end{equation}
	where $\tilde{\mathcal{Q}}^\times:=\tilde{\mathcal{Q}}\setminus\{-4j/\det(Q)\}$.
	This has some resemblance to the euclidean $2$-sphere. First of all, the highest value is attained at the origin, whereas in the euclidean case it is a singularity. Secondly, since $\absol{\Sigma^2_{\mathcal{S}^2_j}(\bm{\xi})}=0$ when $\bm{\xi}\in\mathcal{S}^2_{\times}$, we may ignore the existence of this set in $\F_q^3$ in our analysis. In the euclidean space, such set does not exist. 
	 	
	When $j=0$, by the same reasoning we have that
	\begin{equation}\label{equation:2-conv_cone-d=3-origin_count}
		\absol{\Sigma^2_{\mathcal{S}^2_0}(\bm{\xi})}
		=
		\begin{cases}
				\absol{\mathcal{S}^2_0}, &\bm{\xi}=\bm{0}, \\
				q, &\bm{\xi}\in\mathcal{S}^2_\times, \\
				q-1, &Q(\bm{\xi})=-t\det(Q), \text{ with } t\in\mathcal{Q}, \\
				q+1, &Q(\bm{\xi})=-t\det(Q), \text{ with } t\in\tilde{\mathcal{Q}}.
			\end{cases}
	\end{equation}
	From this it is easy to calculate the two-fold convolution for $\mathcal{S}^2_\times$. Just notice that when $\bm{\xi}=\bm{0}$, we just have to remove the pair $(\bm{\zeta}_1,\bm{\zeta}_2)=(\bm{0},\bm{0})$, and when $\bm{\xi}\in\mathcal{S}^2_\times$ we need to remove the two pairs $(\bm{0},\bm{\xi})$ and $(\bm{\xi},\bm{0})$. Therefore,
	\begin{equation}\label{equation:convolution_cone}
		\absol{\Sigma^2_{\mathcal{S}^2_\times}(\bm{\xi})}
		=
		\begin{cases}
				\absol{\mathcal{S}^2_\times}, &\bm{\xi}=\bm{0}, \\
				q-2, &\bm{\xi}\in\mathcal{S}^2_\times, \\
				q-1, &Q(\bm{\xi})=-t\det(Q), \text{ with } t\in\mathcal{Q}, \\
				q+1, &Q(\bm{\xi})=-t\det(Q), \text{ with } t\in\tilde{\mathcal{Q}}.
		\end{cases}
	\end{equation}
	
	\section{Symmetrization}\label{section:symmetrization}
	The goal of this section is to show that it suffices to establish the sharp inequality \eqref{equation:starting} for non-negative even functions for it to hold for all functions $f:\mathcal{S}\to\C$. This argument only works for hypersurfaces $\mathcal{S}$ that are symmetric with respect to the origin, that is, if $\bm{\zeta}\in\mathcal{S}$ then $-\bm{\zeta}\in\mathcal{S}$, which is exactly the case for $\mathcal{S}^2_j$ and $\mathcal{S}^2_\times$.
	\subsection{Quadrilinear form associated to the extension estimate}
	We start by rewriting the left-hand side of \eqref{equation:starting} as a quadrilinear form.
	\begin{align}
		\sum_{\bm{\xi}\in\F_q^3}\Biggl|\sum_{\substack{\bm{\zeta}_1,\bm{\zeta}_2\in\mathcal{S} \\ \bm{\zeta}_1+\bm{\zeta}_2=\bm{\xi}}}f(\bm{\zeta}_1)f(\bm{\zeta}_2)\Biggr|^2
		&=
		\sum_{\bm{\zeta}_1,\bm{\zeta}_2,\bm{\zeta}_3,\bm{\zeta}_4\in\mathcal{S}}f(\bm{\zeta}_1)f(\bm{\zeta}_2)\overline{f(\bm{\zeta}_3)f(\bm{\zeta}_4)}\delta(\bm{\zeta}_1+\bm{\zeta}_2-\bm{\zeta}_3-\bm{\zeta}_4)
		\nonumber \\&=
		\int_{\mathcal{S}^4}f(\bm{\zeta}_1)\overline{f(-\bm{\zeta}_2)}f(\bm{\zeta}_3)\overline{f(-\bm{\zeta}_4)}d\Sigma_{\bm{\zeta}}
		=
		Q_{\mathcal{S}}(f,f,f,f)\label{equation:quadrilinear_rewrite},
	\end{align}
	where the measure $\Sigma$ is given by
	\begin{equation*}
		d\Sigma_{\bm{\zeta}}
		=
		d\Sigma_{(\bm{\zeta}_1,\bm{\zeta}_2,\bm{\zeta}_3,\bm{\zeta}_4)}
		=
		\delta(\bm{\zeta}_1+\bm{\zeta}_2+\bm{\zeta}_3+\bm{\zeta}_4)d\sigma_{\bm{\zeta}_1}d\sigma_{\bm{\zeta}_2}d\sigma_{\bm{\zeta}_3}d\sigma_{\bm{\zeta}_4},
	\end{equation*}
	and $Q_{\mathcal{S}}$ is the quadrilinear form defined by
	\begin{equation*}
		Q_{\mathcal{S}}(f_1,f_2,f_3,f_4)
		:=
		\int_{\mathcal{S}^4}f_1(\bm{\zeta}_1)\overline{f_2(-\bm{\zeta}_2)}f_3(\bm{\zeta}_3)\overline{f_4(-\bm{\zeta}_4)}d\Sigma_{\bm{\zeta}}.
	\end{equation*} 
	Notice that in \eqref{equation:quadrilinear_rewrite} we have used the fact that $\mathcal{S}$ is symmetric with respect to the origin.
	
	\subsection{Antipodal symmetrization} Given a function $f$ defined on $\mathcal{S}$, we define its non-negative antipodally symmetric rearrangement $f_\sharp$ by
	\begin{equation*}
		f_{\sharp}(\bm{\zeta}):=\sqrt{\frac{\absol{f(\bm{\zeta})}^2+\absol{f(-\bm{\zeta})}^2}{2}},
		\text{ for all } \bm{\zeta}\in\mathcal{S},
	\end{equation*}
	which only works for hypersurfaces that are symmetric with respect to the origin.
	
	We now claim that
	\begin{equation*}
		Q_{\mathcal{S}}(f,f,f,f)\leq Q_{\mathcal{S}}(f_{\sharp},f_{\sharp},f_{\sharp},f_{\sharp})
	\end{equation*}
	and that this inequality is sharp.
	This then implies that if the sharp inequality \eqref{equation:starting} holds for all non-negative even functions, then it will hold for all functions. Indeed, this is the case because the right-hand side of \eqref{equation:starting} does not change if we replace $f$ by $f_\sharp$.
	
	To prove our claim we first notice that, by the symmetry of the measure $\Sigma$, one has that
	\begin{equation*}
		Q_{\mathcal{S}}(f,f,f,f)=Q_{\mathcal{S}}(f^\ast,f^\ast,f,f),\text{ where } f^\ast(\bm{\xi}):=\overline{f(-\bm{\xi})}.
	\end{equation*}
	Therefore, taking the average of these two expressions
	\begin{equation*}
		\begin{split}
			Q_{\mathcal{S}}(f,f,f,f)
			&=
			\int_{\mathcal{S}^4}
			\left(\frac{f(\bm{\zeta}_1)\overline{f(-\bm{\zeta}_2)}+\overline{f(-\bm{\zeta}_1)}f(\bm{\zeta}_2)}{2}\right)f(\bm{\zeta}_3)\overline{f(-\bm{\zeta}_4)}d\Sigma_{\bm{\zeta}}
			\\&\leq
			\int_{\mathcal{S}^4}
			\Absol{\frac{f(\bm{\zeta}_1)\overline{f(-\bm{\zeta}_2)}+\overline{f(-\bm{\zeta}_1)}f(\bm{\zeta}_2)}{2}}\Absol{f(\bm{\zeta}_3)\overline{f(-\bm{\zeta}_4)}}d\Sigma_{\bm{\zeta}}.
		\end{split}
	\end{equation*}
	Then, we apply the Cauchy--Schwarz inequality pointwise:
	\begin{equation*}
		\Absol{f(\bm{\zeta}_1)\overline{f(-\bm{\zeta}_2)}+\overline{f(-\bm{\zeta}_1)}f(\bm{\zeta}_2)}
		\leq
		2f_\sharp(\bm{\zeta}_1)f_\sharp(\bm{\zeta}_2).
	\end{equation*}
	Putting these two together, we show that
	\begin{equation*}
		Q_{\mathcal{S}}(f,f,f,f)\leq Q_{\mathcal{S}}(f_\sharp,f_\sharp,\absol{f},\absol{f}),
	\end{equation*}
	with equality if and only if 
	\begin{align}
		&\Big(f(\bm{\zeta}_1)\overline{f(-\bm{\zeta}_2)} + \overline{f(-\bm{\zeta}_1)}f(\bm{\zeta}_2)\Big) f(\bm{\zeta}_3) \overline{f(-\bm{\zeta}_4)}\geq0\text{, for all $\bm{\zeta}_i$ such that $\sum_{i=1}^{4}\bm{\zeta}_i=\bm{0}$, and}
		\label{equation:symmetrization_cond1}
		\\
		&\Big(f(\bm{\zeta}_1),f^\ast(\bm{\zeta}_1)\Big)=C(\bm{\zeta}_1,\bm{\zeta}_2)\Big(f(-\bm{\zeta}_2),f^\ast(-\bm{\zeta}_2)\Big)\text{, for all $\bm{\zeta}_1,\bm{\zeta}_2\in\mathcal{S}$,}
		\label{equation:symmetrization_cond2}
	\end{align}
	where $C(\bm{\zeta}_1,\bm{\zeta}_2)\in\C$ is some constant that only depends on $\bm{\zeta}_1$ and $\bm{\zeta}_2$. 
	
	Furthermore, because $Q_{\mathcal{S}}(f,f,f,f)=Q_{\mathcal{S}}(f,f,f^\ast,f^\ast)$, we may show by the same procedure that
	\begin{equation*}
		Q_{\mathcal{S}}(f,f,f,f)\leq Q_{\mathcal{S}}(f_\sharp,f_\sharp,\absol{f},\absol{f})
		\leq
		Q_{\mathcal{S}}(f_\sharp,f_\sharp,f_\sharp,f_\sharp).
	\end{equation*}
	If $f$ satisfies the previous conditions, then the previous inequality will also be an equality. We are now able to start the proof of Theorems \ref{theorem:sharp_L2-L4_2-sphere_jnotsquare} and \ref{theorem:sharp_L2-L4_2-cones}.
	
	\section{Proof of Theorem \ref{theorem:sharp_L2-L4_2-sphere_jnotsquare}}\label{section:proof_spheres}
	From Sections \ref{section:overview} and \ref{section:symmetrization}, we just have to establish the corresponding sharp inequality~\eqref{equation:starting} for all non-negative even functions $f:\mathcal{S}^2_j\to\C$ with
	\begin{equation*}
		{\bf C}^\ast_{\mathcal{S}^2_j}(2\to4)
		=
		q+1-\frac{3q}{\absol{\mathcal{S}^2_j}}
		=
		q+1-\frac{3}{q-1}.
	\end{equation*}	
	
	From \eqref{equation:convolution_sphere}, we must have
	\begin{equation*}
		\Sigma^2_{\mathcal{S}^2_j}(\bm{0})
		=
		\{(\bm{\zeta},-\bm{\zeta}): \bm{\zeta}\in\mathcal{S}^2_j\}
		\text{ and }
		\Sigma^2_{\mathcal{S}^2_j}(\bm{\xi})
		=
		\{(\bm{\xi}/2,\bm{\xi}/2)\}
		\text{ for all } \bm{\xi}\in\mathcal{S}^2_{4j}.
	\end{equation*}
	From now on, we shall drop the superscript $2$ in $\mathcal{S}^2_k$ for any $k\neq0$. Moreover, we shall write $\tilde{Q}(\bm{\xi}):=-Q(\bm{\xi})/\det(Q)$.
	
	\subsection*{Partitioning \texorpdfstring{$\mathcal{S}_j$}{the sphere}\label{section:partitioning_sphere} into two spherical caps} Notice that we can define an equivalence relation on $\mathcal{S}_j$ by declaring that $\bm{\zeta}\sim\bm{\eta}$ if and only if $\bm{\zeta}=\pm\bm{\eta}$. Since $j\neq0$, each equivalence class has exactly two distinct elements. 
	
	It is clear that a function on $\mathcal{S}_j/\sim$ defines an even function on $\mathcal{S}_j$, and vice versa. To later take advantage of this, we let $\mathcal{S}^+_j$ be a subset of $\mathcal{S}_j$ that contains precisely one member of each equivalence class. Using this notation, $\mathcal{S}_j$ can be written as the disjoint union of the sets $\mathcal{S}^+_j$ and $-\mathcal{S}^+_j$ which we call positive and negative spherical caps of $\mathcal{S}_j$, respectively. It should be noted that we could have defined $\mathcal{S}^+_j$ more concretely without affecting the proof of the theorem.
	
	We are now ready to start the proof.
	
	\subsection{Mass transport}\label{section:sphere_mass_transport}
	Using the notation established in Section \ref{section:notation} and the fact that $f$ is a non-negative even function, we have that
	\begin{equation*}
		Q_{\mathcal{S}_j}(f,f,f,f)
		=
		\Biggl(\sum_{\mathcal{S}_j}f^2\Biggr)^2
		+
		\sum_{\mathcal{S}_j}f^4
		+
		\sum_{\tilde{Q}(\bm{\xi})\in\mathcal{Q}}\Biggl(\sum_{\mathcal{S}_j,\bm{\xi}}f\cdot f\Biggr)^2
		+
		\sum_{\tilde{Q}(\bm{\xi})\in\tilde{\mathcal{Q}}^\times}\Biggl(\sum_{\mathcal{S}_j,\bm{\xi}}f\cdot f\Biggr)^2.
	\end{equation*}
	To start, we bound from above the terms for which $\tilde{Q}(\bm{\xi})\in\mathcal{Q}$ by applying the Cauchy--Schwarz inequality.
	\begin{equation*}
		\begin{split}
			\sum_{\tilde{Q}(\bm{\xi})\in\mathcal{Q}}\Biggl(\sum_{\mathcal{S}_j,\bm{\xi}}f\cdot f\Biggr)^2
			&\underset{\text{C--S}}{\leq}
			\sum_{\tilde{Q}(\bm{\xi})\in\mathcal{Q}}\absol{\Sigma^2_{\mathcal{S}_j}(\bm{\xi})}\Biggl(\sum_{\mathcal{S}_j,\bm{\xi}}f^2\cdot f^2\Biggr)
			\underset{\eqref{equation:convolution_sphere}}{=}
			(q-1)\sum_{\tilde{Q}(\bm{\xi})\in\mathcal{Q}}\sum_{\mathcal{S}_j,\bm{\xi}}f^2\cdot f^2
			\\&=
			(q-1-x)\sum_{\tilde{Q}(\bm{\xi})\in\mathcal{Q}}\sum_{\mathcal{S}_j,\bm{\xi}}f^2\cdot f^2
			+
			x\sum_{\tilde{Q}(\bm{\xi})\in\mathcal{Q}}\sum_{\mathcal{S}_j,\bm{\xi}}f^2\cdot f^2,
		\end{split}
	\end{equation*}
	where $x$ is a non-negative number smaller than $q-1$. This inequality turns into an equality if and only if for every fixed $\bm{\xi}$ such that $\tilde{Q}(\bm{\xi})\in\mathcal{Q}$, the identity $f(\bm{\zeta})f(\bm{\xi}-\bm{\zeta})=C(\bm{\xi})$ holds for every $\bm{\zeta}\in\mathcal{S}_j$ such that $\bm{\xi}-\bm{\zeta}\in\mathcal{S}_j$, where $C(\bm{\xi})$ is a constant depending solely on $\bm{\xi}$.
	 
	Next, notice that by interchanging the sums,
	\begin{equation*}
		\sum_{\bm{\xi}\in\F_q^3}\sum_{\mathcal{S}_j,\bm{\xi}}f^2\cdot f^2
		=
		\sum_{\bm{\zeta}_1,\bm{\zeta}_2\in\mathcal{S}_j}f^2(\bm{\zeta}_1)f^2(\bm{\zeta}_2)\sum_{\bm{\xi}\in\F_q^3}\bm{1}\left(\bm{\zeta}_1+\bm{\zeta}_2=\bm{\xi}\right)
		=
		\Biggl(\sum_{\mathcal{S}_j}f^2\Biggr)^2,
	\end{equation*} 
	hence, by adding and subtracting the relevant terms, we have that
	\begin{equation}\label{equation:after_mass_transport}
		\begin{split}
			Q_{\mathcal{S}_j}(f,f,f,f)
			\leq
			(q-x)\Biggl(\sum_{\mathcal{S}_j}&f^2\Biggr)^2
			-
			(2q-3-2x)\sum_{\mathcal{S}_j}f^4
			+
			x\sum_{\tilde{Q}(\bm{\xi})\in\mathcal{Q}}\sum_{\mathcal{S}_j,\bm{\xi}}f^2\cdot f^2
			\\+&
			\sum_{\tilde{Q}(\bm{\xi})\in\tilde{\mathcal{Q}}^\times}\left[\Biggl(\sum_{\mathcal{S}_j,\bm{\xi}}f\cdot f\Biggr)^2-(q-1-x)\sum_{\mathcal{S}_j,\bm{\xi}}f^2\cdot f^2\right].
		\end{split}
	\end{equation}
	
	\subsection{Understanding the final term}
	An application of the Cauchy--Schwarz inequality to the last term yields
	\begin{equation*}
		\sum_{\tilde{Q}(\bm{\xi})\in\tilde{\mathcal{Q}}^\times}\left[\Biggl(\sum_{\mathcal{S}_j,\bm{\xi}}f\cdot f\Biggr)^2-(q-1-x)\sum_{\mathcal{S}_j,\bm{\xi}}f^2\cdot f^2\right]
		\leq
		\left(\frac{2+x}{q+1}\right)\sum_{\tilde{Q}(\bm{\xi})\in\tilde{\mathcal{Q}}^\times}\Biggl(\sum_{\mathcal{S}_j,\bm{\xi}}f\cdot f\Biggr)^2.
	\end{equation*}
	This inequality turns into an equality if and only if for every fixed $\bm{\xi}$ such that $\tilde{Q}(\bm{\xi})\in\tilde{\mathcal{Q}}^\times$, the identity $f(\bm{\zeta})f(\bm{\xi}-\bm{\zeta})=C(\bm{\xi})$ holds for every $\bm{\zeta}\in\mathcal{S}_j$ such that $\bm{\xi}-\bm{\zeta}\in\mathcal{S}_j$, where $C(\bm{\xi})$ is a constant depending solely on $\bm{\xi}$.
	
	We now make two claims related to this term which we will show at the end of the proof in Sections \ref{section:proving_claim_1} and \ref{section:proving_claim_2}.
	\subsection*{Claim 1}
	By the AM-GM inequality and a counting argument, it can be shown that
	\begin{equation}\label{equation:claim_1}
		\sum_{\tilde{Q}(\bm{\xi})\in\tilde{\mathcal{Q}}^\times}\Biggl(\sum_{\mathcal{S}_j,\bm{\xi}}f\cdot f\Biggr)^2
		\leq
		\sum_{\bm{\zeta}_1,\bm{\zeta}_2\in\mathcal{S}_j}f^2(\bm{\zeta}_1)f^2(\bm{\zeta}_2)\mathfrak{C}(\bm{\zeta}_1,\bm{\zeta}_2),
	\end{equation}
	where 
	\begin{equation*}
		\begin{split}
			\mathfrak{C}(\bm{\zeta}_1,\bm{\zeta}_2)
			:&=
			\frac{q-3}{2}[1-\delta_{-\bm{\zeta}_1}(\bm{\zeta}_2)+q\delta_{\bm{\zeta}_1}(\bm{\zeta}_2)]
			\\&+
			\frac{1}{2}\left[2+\eta(-j)\eta(2(Q(\bm{\zeta}_1,\bm{\zeta}_2)-j))+\eta(Q(\bm{\zeta}_1,\bm{\zeta}_2)^2-j^2)\right]\bm{1}_{\mathcal{S}_j\setminus\{\bm{\zeta}_1,-\bm{\zeta}_1\}}(\bm{\zeta}_2),
		\end{split}
	\end{equation*}
	Moreover, this inequality turns into an equality if and only if for every fixed $\bm{\xi}$ such that $\tilde{Q}(\bm{\xi})\in\tilde{\mathcal{Q}}^\times$, the identity $f(\bm{\zeta}_1)f(\bm{\zeta}_2)=f(\bm{\xi}-\bm{\zeta}_1)f(\bm{\xi}-\bm{\zeta}_2)$ holds for every $\bm{\zeta}_i\in\mathcal{S}_j$ such that $\bm{\xi}-\bm{\zeta}_i\in\mathcal{S}_j$ for $i\in\{1,2\}$.
	
	We may rewrite this using the following four sets:
	\begin{equation*}
		B_{k_0,k_1}(\bm{\zeta}_1)
		:=
		\{\bm{\zeta}_2\in\mathcal{S}_j\setminus\{\pm\bm{\zeta}_1\}: \eta(-2j(Q(\bm{\zeta}_1,\bm{\zeta}_2)+j))=k_0 \text{ and } \eta(-2j(Q(\bm{\zeta}_1,\bm{\zeta}_2)-j))=k_1\},
	\end{equation*}
	where $k_0,k_1\in\{-1,1\}$. By the fact that 
	\begin{equation*}
		\eta(Q(\bm{\zeta}_1,\bm{\zeta}_2)^2-j^2)=\eta(4j^2)\eta(Q(\bm{\zeta}_1,\bm{\zeta}_2)^2-j^2)=\eta(-2j(Q(\bm{\zeta}_1,\bm{\zeta}_2)+j))\eta(-2j(Q(\bm{\zeta}_1,\bm{\zeta}_2)-j)),
	\end{equation*} 
	the right-hand side of \eqref{equation:claim_1} can be written as
	\begin{equation}\label{equation:biggest_challenge} 
		\begin{split}
			\frac{q-3}{2}&\Biggl(\sum_{\mathcal{S}_j}f^2\Biggr)^2
			+
			\frac{(q-3)(q-1)}{2}\sum_{\mathcal{S}_j}f^4
			\\&+
			\sum_{\bm{\zeta}_1,\bm{\zeta}_2\in\mathcal{S}_j}f^2(\bm{\zeta}_1)f^2(\bm{\zeta}_2)\left[2\bm{1}_{B_{1,1}(\bm{\zeta}_1)}(\bm{\zeta}_2)+\bm{1}_{B_{-1,-1}(\bm{\zeta}_1)}(\bm{\zeta}_2)+\bm{1}_{B_{-1,1}(\bm{\zeta}_1)}(\bm{\zeta}_2)\right],
		\end{split}
	\end{equation}
	where we have implicitly used the fact that $f$ is even. Our biggest challenge now will be taking care of the last term in the previous sum. To do so, we will need the following claim.
	
	\subsection*{Claim 2}\label{section:claim_2}
	This claim pertains to the cardinality of the sets $B_{k_0,k_1}(\bm{\zeta}_1)$ which, as we will see, does not depend on the choice of $\bm{\zeta}_1\in\mathcal{S}_j$. This makes sense due to the symmetry of such surfaces and we will sometimes simply refer to these sets as $B_{k_0,k_1}$ when talking about its cardinality. Indeed, it can be shown via a counting argument that
	\begin{equation*}
		\absol{B_{k,-1}(\bm{\zeta}_1)}
		=
		\frac{1}{4}(q+1)[q-1+k(1-\eta(-1))]
		\text{ and }
		\absol{B_{k,1}(\bm{\zeta}_1)}
		=
		\frac{1}{4}(q+1)[q-3-k(1+\eta(-1))].
	\end{equation*}
	
	Looking at the cardinatily of these sets, one notices that they depend on whether $-1$ is a square in $\F_q$. This is, in someway, related to the fact that 
	\begin{equation*}
		\eta(-1)=1 \Rightarrow B_{k_0,k_1}(-\bm{\zeta}_1)=B_{k_1,k_0}(\bm{\zeta}_1) 
		\text{ and } \eta(-1)=-1 \Rightarrow B_{k_0,k_1}(-\bm{\zeta}_1)=B_{-k_1,-k_0}(\bm{\zeta}_1).
	\end{equation*}
	This is why we must consider these two cases separately. We will refer to these expressions as the antipodal relations for the sets $B_{k_0,k_1}$. 
	
	\subsection{The case \texorpdfstring{$\eta(-1)=1$}{eta(-1)=1}}\label{section:part_1}
	We will now take advantage of the partition of $\mathcal{S}_j$ into two spherical caps. Because $f$ is even, the last term in \eqref{equation:biggest_challenge} is equal to
	\begin{equation*}
		\begin{split}
			\sum_{\substack{\bm{\zeta}_1\in\mathcal{S}^+_j \\ \bm{\zeta}_2\in\mathcal{S}_j}}&f^2(\bm{\zeta}_1)f^2(\bm{\zeta}_2)\left[4\bm{1}_{B_{1,1}(\bm{\zeta}_1)}(\bm{\zeta}_2)+2\bm{1}_{B_{-1,-1}(\bm{\zeta}_1)}(\bm{\zeta}_2)+\bm{1}_{B_{1,-1}(\bm{\zeta}_1)}(\bm{\zeta}_2)+\bm{1}_{B_{-1,1}(\bm{\zeta}_1)}(\bm{\zeta}_2)\right]
			=\\
			\frac{1}{2}\sum_{\bm{\zeta}_1,\bm{\zeta}_2\in\mathcal{S}_j}&f^2(\bm{\zeta}_1)f^2(\bm{\zeta}_2)\left[4\bm{1}_{B_{1,1}(\bm{\zeta}_1)}(\bm{\zeta}_2)+2\bm{1}_{B_{-1,-1}(\bm{\zeta}_1)}(\bm{\zeta}_2)+\bm{1}_{B_{1,-1}(\bm{\zeta}_1)}(\bm{\zeta}_2)+\bm{1}_{B_{-1,1}(\bm{\zeta}_1)}(\bm{\zeta}_2)\right],
		\end{split}
	\end{equation*}
	where we used the antipodal relations for $B_{k_0,k_1}$. Notice that this equality does not hold when $\eta(-1)=-1$. Moreover, it is absolutely essential for $f$ to be even for this step to work, which shows the critical role that the symmetrization argument plays in this case.
	
	
	Notice that 
	\begin{equation}\label{equation:life_saver}
		\bm{1}_{B_{1,1}(\bm{\zeta}_1)}(\bm{\zeta}_2)+\bm{1}_{B_{-1,-1}(\bm{\zeta}_1)}(\bm{\zeta}_2)+\bm{1}_{B_{1,-1}(\bm{\zeta}_1)}(\bm{\zeta}_2)+\bm{1}_{B_{-1,1}(\bm{\zeta}_1)}(\bm{\zeta}_2)
		=
		1-\delta_{\bm{\zeta}_1}(\bm{\zeta}_2)-\delta_{-\bm{\zeta}_1}(\bm{\zeta}_2).
	\end{equation}
	Using this, the last term in \eqref{equation:biggest_challenge} may be written as
	\begin{equation}\label{equation:last_term_inbiggestchallenge}
		\frac{1}{2}\Biggl(\sum_{\mathcal{S}_j}f^2\Biggr)^2
		-
		\sum_{\mathcal{S}_j}f^4
		+
		\frac{1}{2}\sum_{\bm{\zeta}_1,\bm{\zeta}_2\in\mathcal{S}_j}f^2(\bm{\zeta}_1)f^2(\bm{\zeta}_2)\left[3\bm{1}_{B_{1,1}(\bm{\zeta}_1)}(\bm{\zeta}_2)+\bm{1}_{B_{-1,-1}(\bm{\zeta}_1)}(\bm{\zeta}_2)\right].
	\end{equation}
	Additionally, we may upper bound this using the AM-GM inequality and the fact that $\bm{\zeta}_2\in B_{k_0,k_1}(\bm{\zeta}_1)$ if and only if $\bm{\zeta}_1\in B_{k_0,k_1}(\bm{\zeta}_2)$. Indeed, given a fixed pair $(k_0,k_1)$,
	\begin{equation}\label{equation:simplify_to_L4}
		\sum_{\bm{\zeta}_1,\bm{\zeta}_2\in\mathcal{S}_j}f^2(\bm{\zeta}_1)f^2(\bm{\zeta}_2)\bm{1}_{B_{k_0,k_1}(\bm{\zeta}_1)}(\bm{\zeta}_2)
		\leq
		\sum_{\bm{\zeta}_1\in\mathcal{S}_j}f^4(\bm{\zeta}_1)\absol{B_{k_0,k_1}(\bm{\zeta}_1)}
		=
		\absol{B_{k_0,k_1}}\sum_{\mathcal{S}_j}f^4,
	\end{equation}
	with equality if and only if for every fixed $\bm{\zeta}_1\in\mathcal{S}_j$ the identity $f(\bm{\zeta}_1)=f(\bm{\zeta}_2)$ holds for all $\bm{\zeta}_2\in B_{k_0,k_1}(\bm{\zeta}_1)$. Here, $\absol{B_{k_0,k_1}}=\absol{B_{k_0,k_1}(\bm{\zeta})}$ for any $\bm{\zeta}\in\mathcal{S}_j$ since it does not depend on the choice of $\bm{\zeta}$ by the second claim.
	
	Applying this to \eqref{equation:last_term_inbiggestchallenge} shows that it is bounded from above by
	\begin{equation*}
		\frac{1}{2}\Biggl(\sum_{\mathcal{S}_j}f^2\Biggr)^2
		+
		\left(\frac{3\absol{B_{1,1}}}{2}+\frac{\absol{B_{-1,-1}}}{2}-1\right)\sum_{\mathcal{S}_j}f^4
		=
		\frac{1}{2}\Biggl(\sum_{\mathcal{S}_j}f^2\Biggr)^2
		+
		\left(\frac{q^2-3q-6}{2}\right)\sum_{\mathcal{S}_j}f^4,
	\end{equation*} 
	where we used the expressions for $\absol{B_{k_0,k_1}}$ from the second claim.
	
	Putting all of this together, we have that
	\begin{equation*}
		\sum_{\tilde{Q}(\bm{\xi})\in\tilde{\mathcal{Q}}^\times}\Biggl(\sum_{\mathcal{S}_j,\bm{\xi}}f\cdot f\Biggr)^2
		\leq
		\frac{q-2}{2}\Biggl(\sum_{\mathcal{S}_j}f^2\Biggr)^2
		+
		\left(\frac{2q^2-7q-3}{2}\right)\sum_{\mathcal{S}_j}f^4.
	\end{equation*}
	
	Taking $x=0$ and applying this upper bound to \eqref{equation:after_mass_transport}, we conclude that
	\begin{equation*}
		\begin{split}
			Q_{\mathcal{S}_j}(f,f,f,f)
			&\leq
			\left(q+\frac{q-2}{q+1}\right)\Biggl(\sum_{\mathcal{S}_j}f^2\Biggr)^2
			+
			\left(\frac{2q^2-7q-3}{q+1}-2q+3\right)\sum_{\mathcal{S}_j}f^4
			\\&=
			\left(q+1-\frac{3}{q+1}\right)\Biggl(\sum_{\mathcal{S}_j}f^2\Biggr)^2
			-
			\frac{6q}{q+1}\sum_{\mathcal{S}_j}f^4.
		\end{split}
	\end{equation*}
	A final application of the Cauchy--Schwarz inequality,
	\begin{equation}\label{equation:final_CS}
		-\sum_{\mathcal{S}_j}f^4
		\leq
		-\frac{1}{q^2-q}\Biggl(\sum_{\mathcal{S}_j}f^2\Biggr)^2,
	\end{equation}
	with equality if and only if $f$ is constant, gives us the desired inequality. Here, we used the fact that $\absol{\mathcal{S}_j}=q^2-q$.
	
	Notice that this inequality is sharp because constant functions turn each step of the proof into an equality, including the symmetrization argument. This establishes the sharp extension inequality \eqref{equation:sharp_L2-L4_2-spheres} when $\eta(-1)=1$. 
	
	\subsection{The case \texorpdfstring{$\eta(-1)=-1$}{eta(-1)=-1}}\label{section:part_2}
	As we discussed in the previous case, the same method will not work. Indeed, if we used the antipodal relations for $B_{k_0,k_1}$ and the fact that $f$ is even, we would see that the last term in \eqref{equation:biggest_challenge} is equal to
	\begin{equation*}
			\frac{1}{2}\sum_{\bm{\zeta}_1,\bm{\zeta}_2\in\mathcal{S}_j}f^2(\bm{\zeta}_1)f^2(\bm{\zeta}_2)\left[3\bm{1}_{B_{1,1}(\bm{\zeta}_1)}(\bm{\zeta}_2)+3\bm{1}_{B_{-1,-1}(\bm{\zeta}_1)}(\bm{\zeta}_2)+2\bm{1}_{B_{-1,1}(\bm{\zeta}_1)}(\bm{\zeta}_2)\right].
	\end{equation*}
	However, now we would not be able to use identity \eqref{equation:life_saver} since we would be missing the term $\bm{1}_{B_{1,-1}(\bm{\zeta}_1)}(\bm{\zeta}_2)$.
	
	To solve this issue, we will need $x\neq0$ and to use the fact that
	\begin{equation}\label{equation:using_x_term}
		\sum_{\tilde{Q}(\bm{\xi})\in\mathcal{Q}}\sum_{\mathcal{S}_j,\bm{\xi}}f^2\cdot f^2
		=
		\sum_{\bm{\zeta}_1,\bm{\zeta}_2\in\mathcal{S}_j}f^2(\bm{\zeta}_1)f^2(\bm{\zeta}_2)\left[\bm{1}_{B_{1,1}(\bm{\zeta}_1)}(\bm{\zeta}_2)+\bm{1}_{B_{1,-1}(\bm{\zeta}_1)}(\bm{\zeta}_2)\right].
	\end{equation}
	To see that this equality holds, notice that, by interchanging the sums, the left-hand side equals
	\begin{equation*}
		\begin{split}
			&\sum_{\bm{\zeta}_1,\bm{\zeta}_2\in\mathcal{S}_j}f^2(\bm{\zeta}_1)f^2(\bm{\zeta}_2)\sum_{\bm{\xi}\in\F_q^3}\bm{1}\left(\bm{\zeta}_1+\bm{\zeta}_2=\bm{\xi} \text{ and } \eta(\tilde{Q}(\bm{\xi}))=1\right)
			\\=&
			\sum_{\bm{\zeta}_1,\bm{\zeta}_2\in\mathcal{S}_j}f^2(\bm{\zeta}_1)f^2(\bm{\zeta}_2)\bm{1}\left(\eta(-2j(Q(\bm{\zeta}_1,\bm{\zeta}_2)+j))=1\right)\sum_{\bm{\xi}\in\F_q^3}\bm{1}\left(\bm{\zeta}_1+\bm{\zeta}_2=\bm{\xi}\right).
		\end{split}
	\end{equation*}
	Here, we used the fact that $\tilde{Q}(\bm{\xi})=\tilde{Q}(\bm{\zeta}_1+\bm{\zeta}_2)=-2[Q(\bm{\zeta}_1,\bm{\zeta}_2)+j]/\det(Q)$, meaning that $\eta(\tilde{Q}(\bm{\xi}))=-\eta(2j(Q(\bm{\zeta}_1,\bm{\zeta}_2)+j))=\eta(-2j(Q(\bm{\zeta}_1,\bm{\zeta}_2)+j))$ since $\eta(-1)=-1$ and $\eta(-j\det(Q))=-1$. Identity \eqref{equation:using_x_term} then follows, as the last sum equals one for any pair $(\bm{\zeta}_1,\bm{\zeta}_2)$, and we may rewrite $\bm{1}\left(\eta(-2j(Q(\bm{\zeta}_1,\bm{\zeta}_2)+j))=1\right)$ using the sets $B_{k_0,k_1}$.
	
	We have found our desired $\bm{1}_{B_{1,-1}(\bm{\zeta}_1)}(\bm{\zeta}_2)$ term and we now have to fix $0<x<q-1$ to apply \eqref{equation:life_saver}. To arrange as many $\bm{1}_{B_{k_0,k_1}(\bm{\zeta}_1)}(\bm{\zeta}_2)$ terms as possible we choose $x$ such that
	\begin{equation*}
		x=\frac{2+x}{q+1}
		\Rightarrow
		x=\frac{2}{q}.
	\end{equation*}
	
	Adding \eqref{equation:using_x_term} to the last term in \eqref{equation:biggest_challenge} with their respective coefficients yields
	\begin{equation*}
		\frac{2}{q}\Biggl(\sum_{\mathcal{S}_j}f^2\Biggr)^2
		-
		\frac{4}{q}\sum_{\mathcal{S}_j}f^4
		+
		\frac{4}{q}\sum_{\bm{\zeta}_1,\bm{\zeta}_2\in\mathcal{S}_j}f^2(\bm{\zeta}_1)f^2(\bm{\zeta}_2)\bm{1}_{B_{1,1}(\bm{\zeta}_1)}(\bm{\zeta}_2),
	\end{equation*}
	where we used identity \eqref{equation:life_saver}. Using inequality \eqref{equation:simplify_to_L4}, this is bounded from above by
	\begin{equation*}
		\frac{2}{q}\Biggl(\sum_{\mathcal{S}_j}f^2\Biggr)^2
		+
		\frac{4}{q}\left(\absol{B_{1,1}}-1\right)\sum_{\mathcal{S}_j}f^4
		=
		\frac{2}{q}\Biggl(\sum_{\mathcal{S}_j}f^2\Biggr)^2
		+
		\left(q-2-\frac{7}{q}\right)\sum_{\mathcal{S}_j}f^4.
	\end{equation*}
	
	Applying all of this to \eqref{equation:after_mass_transport}, we conclude that
	\begin{equation*}
		Q_{\mathcal{S}_j}(f,f,f,f)
		\leq
		\left(q+1-\frac{3}{q}\right)\Biggl(\sum_{\mathcal{S}_j}f^2\Biggr)^2
		-
		3\sum_{\mathcal{S}_j}f^4
		\leq
		\left(q+1-\frac{3}{q-1}\right)\Biggl(\sum_{\mathcal{S}_j}f^2\Biggr)^2,
	\end{equation*}
	where the last inequality is an application of Cauchy--Schwarz to the $L^4$ norm of $f$, as in~\eqref{equation:final_CS}. Once again, this inequality is sharp because constant functions turn each step of the proof (including the symmetrization argument) into an equality. This establishes the sharp extension inequality \eqref{equation:sharp_L2-L4_2-spheres} when $\eta(-1)=-1$ if claims 1 and 2 hold. 
	
	We refer the reader to Section \ref{section:maximizers_const_mod} for a proof that the maximizers of \eqref{equation:sharp_L2-L4_2-spheres} have constant modulus.
	
	Before showing the claims, we should emphasize the importance of the symmetrization argument in this proof. In Section \ref{section:part_1}, the fact that $f$ is even ensured that we could rewrite the last term in \eqref{equation:biggest_challenge}, which was then of no use in the case where $\eta(-1)=-1$. However, if we tried to rewrite the proof of this case without considering $f$ to be even, then we would reach a standstill: We could not apply the Cauchy--Schwarz inequality to the final $L^4$ norm term as it would be a positive quantity. So, although the symmetrization argument is not overtly used when $-1$ is not a square, it plays a critical role in both cases. 
	
	We now finish the proof of Theorem \ref{theorem:sharp_L2-L4_2-sphere_jnotsquare} by showing the previous claims.
	
	\subsection{Proof of claim 1}\label{section:proving_claim_1}
	Applying the AM-GM inequality to the left-hand side of \eqref{equation:claim_1}, similarly to what was done in \eqref{equation:simplify_to_L4}, we have that
	\begin{equation*}
		\begin{split}
			\sum_{\tilde{Q}(\bm{\xi})\in\tilde{\mathcal{Q}}^\times}\Biggl(\sum_{\mathcal{S}_j,\bm{\xi}}f\cdot f\Biggr)^2
			&=
			\sum_{\tilde{Q}(\bm{\xi})\in\tilde{\mathcal{Q}}^\times}\sum_{\substack{\bm{\zeta}_1,\bm{\zeta}_2\in\mathcal{S}_j \\ \bm{\xi}-\bm{\zeta}_1, \bm{\xi}-\bm{\zeta}_2\in\mathcal{S}_j}}f(\bm{\zeta}_1)f(\bm{\xi}-\bm{\zeta}_1) f(\bm{\zeta}_2)f(\bm{\xi}-\bm{\zeta}_2)
			\\&\leq
			\sum_{\tilde{Q}(\bm{\xi})\in\tilde{\mathcal{Q}}^\times}\sum_{\substack{\bm{\zeta}_1,\bm{\zeta}_2\in\mathcal{S}_j \\ \bm{\xi}-\bm{\zeta}_1, \bm{\xi}-\bm{\zeta}_2\in\mathcal{S}_j}}f^2(\bm{\zeta}_1)f^2(\bm{\zeta}_2)
			\\&=
			\sum_{\bm{\zeta}_1,\bm{\zeta}_2\in\mathcal{S}_j}f^2(\bm{\zeta}_1)f^2(\bm{\zeta}_2)\sum_{\tilde{Q}(\bm{\xi})\in\tilde{\mathcal{Q}}^\times}\bm{1}\left(\bm{\xi}-\bm{\zeta}_1,\bm{\xi}-\bm{\zeta}_2\in\mathcal{S}_j\right),
		\end{split}
	\end{equation*}
	with equality in the conditions that were stated in the claim. To finish proving the claim, we just need to show that
	\begin{equation}\label{equation:suffices_claim1}
		\sum_{\tilde{Q}(\bm{\xi})\in\tilde{\mathcal{Q}}^\times}\bm{1}\left(\bm{\xi}-\bm{\zeta}_1,\bm{\xi}-\bm{\zeta}_2\in\mathcal{S}_j\right)
		=
		\mathfrak{C}(\bm{\zeta}_1,\bm{\zeta}_2).
	\end{equation}
	To do so, start by noticing the following equality:
	\begin{equation*}
		\begin{split}
			\{\bm{\xi}\in\F_q^3:\tilde{Q}(\bm{\xi})\in\tilde{\mathcal{Q}}^\times\}
			=
			\bigcup_{t\in\F_q^{\times}\setminus\{-1,1\}}\mathcal{S}_{4jt^2}.
		\end{split}
	\end{equation*}
	It holds because $-j\det(Q)$ is not a square, meaning that $-4jt^2/\det(Q)$ will never be a square. Indeed, $\eta(\tilde{Q}(\bm{\xi}))=\eta(-Q(\bm{\xi}))\eta(\det(Q))=\eta(-j\det(Q))\eta(4t^2)=-1$. Furthermore, $\{-4jt^2/\det(Q):t\in\F_q^\times\}$ corresponds to all $(q-1)/2$ elements of $\F_q^\times$ which are not squares. 
	
	On the other hand, the condition on the left-hand side of \eqref{equation:suffices_claim1} may be written as
	\begin{equation*}
		\begin{split}
			Q(\bm{\xi}-\bm{\zeta}_1)=j \text{ and } Q(\bm{\xi}-\bm{\zeta}_2)=j
			\Leftrightarrow
			\frac{Q(\bm{\xi})}{2}=Q(\bm{\xi},\bm{\zeta}_1) \text{ and } \frac{Q(\bm{\xi})}{2}=Q(\bm{\xi},\bm{\zeta}_2),
		\end{split}
	\end{equation*}
	since $Q(\bm{\zeta}_1),Q(\bm{\zeta}_2)=j$. Finally, using the fact that $\bm{\xi}=t\bm{\eta}$ for some $t\in\F_q^{\times}\setminus\{-1,1\}$ and $\bm{\eta}\in\mathcal{S}_{4j}$, the previous condition is equivalent to
	\begin{equation*}
		\begin{split}
			t^2\frac{Q(\bm{\eta})}{2}=tQ(\bm{\eta},\bm{\zeta}_1) \text{ and } t^2\frac{Q(\bm{\eta})}{2}=tQ(\bm{\eta},\bm{\zeta}_2)
			\Leftrightarrow
			2jt=Q(\bm{\eta},\bm{\zeta}_1) \text{ and } 
			2jt=Q(\bm{\eta},\bm{\zeta}_2).
		\end{split}
	\end{equation*}
	Notice that writing $\bm{\xi}=t\bm{\eta}$ means that each $\bm{\xi}$ may be identified with either $(t,\bm{\eta})$ or $(-t,-\bm{\eta})$.
	It follows that the left-hand side of \eqref{equation:suffices_claim1} is equal to
	\begin{align}
			&\frac{1}{2}\sum_{\bm{\xi}\in\mathcal{S}_{4j}}\sum_{\substack{t\in\F_q^{\times} \\ t\neq\pm1}}\delta(Q(\bm{\xi},\bm{\zeta}_1)-2jt)\delta(Q(\bm{\xi},\bm{\zeta}_1)-2jt)
			\nonumber\\=
			&\frac{q^{-2}}{2}\sum_{\bm{\xi}\in\mathcal{S}_{4j}}\sum_{\substack{t\in\F_q^{\times} \\ t\neq\pm1}}\sum_{r,s\in\F_q}\chi(r(Q(\bm{\xi},\bm{\zeta}_1)-2jt))\chi(s(Q(\bm{\xi},\bm{\zeta}_2)-2jt))
			\label{equation:sphere_before_dividing_u}\\\nonumber=
			&\frac{q^{-3}}{2}\sum_{u\in\F_q}\Biggl(\sum_{\bm{\xi}\in\F_q^3}\sum_{\substack{t\in\F_q^{\times} \\ t\neq\pm1}}\sum_{r,s\in\F_q}\chi(u(Q(\bm{\xi})-4j))\chi(r(Q(\bm{\xi},\bm{\zeta}_1)-2jt))\chi(s(Q(\bm{\xi},\bm{\zeta}_2)-2jt))\Biggr),
	\end{align}
	where we used the property of the delta function multiple times. We must have $u\neq0$ to make use of Lemma~\ref{lemma:quadratic_gauss_sums_ax^2+bx}. Hence, we divide the previous sum into $u=0$ and $u\in\F_q^\times$, and treat each sum separately.
	
	When $u=0$, the expression inside the parenthesis simplifies to
	\begin{equation*}
		\begin{split}
			\sum_{\substack{t\in\F_q^{\times} \\ t\neq\pm1}}\sum_{r,s\in\F_q}\chi(-2jt(r+s))&\sum_{\bm{\xi}\in\F_q^3}\chi(Q(\bm{\xi},r\bm{\zeta}_1+s\bm{\zeta}_2))
			\\=&
			q^3\sum_{\substack{t\in\F_q^{\times} \\ t\neq\pm1}}\sum_{r,s\in\F_q}\chi(-2jt(r+s))\delta(r\bm{\zeta}_1+s\bm{\zeta}_2),
		\end{split}
	\end{equation*}
	since $a_i\neq0$ for all $i\in\{1,2,3\}$.
	Further dividing this into $r=0$ and $r\in\F_q^\times$, we see that when $r=0$ this equals
	\begin{equation*}
		\begin{split}
			q^3\sum_{\substack{t\in\F_q^{\times} \\ t\neq\pm1}}\sum_{s\in\F_q}\chi(-2jts)\delta(s\bm{\zeta}_2)
			=
			q^3(q-3).
		\end{split}
	\end{equation*}
	The remaining sum over $r\in\F_q^\times$ then forces $\bm{\zeta}_1=-\frac{s}{r}\bm{\zeta}_2$. Since $Q(\bm{\zeta}_i)=j$ for both $i=1$ and $i=2$, this expression is nonzero only when $\bm{\zeta}_2=\bm{\zeta}_1$ or $\bm{\zeta}_2=-\bm{\zeta}_1$. Therefore, the remaining sum over $r\in\F_q^\times$ equals
	\begin{equation*}
		q^3(q-1)(q-3)\bm{1}(\bm{\zeta}_2=\bm{\zeta}_1)
		-
		q^3(q-3)\bm{1}(\bm{\zeta}_2=-\bm{\zeta}_1).
	\end{equation*}
	In summary, when $u=0$ we have that the expression in \eqref{equation:sphere_before_dividing_u} inside the parenthesis equals
	\begin{equation}\label{equation:sphere_u=0}
		q^3(q-3)\left[1-\delta_{-\bm{\zeta}_1}(\bm{\zeta}_2)+(q-1)\delta_{\bm{\zeta}_1}(\bm{\zeta}_2)\right].
	\end{equation}
	
	On the remaining sum over $u\in\F_q^\times$, we may apply Lemma~\ref{lemma:quadratic_gauss_sums_ax^2+bx} exactly three times, one for each coordinate of $\bm{\xi}$, which shows that it is equal to
	\begin{equation*}
			\eta(\det(Q))G^3(\eta,\chi)\sum_{\substack{t,u\in\F_q^{\times} \\ t\neq\pm1}}\sum_{r,s\in\F_q}\eta(u)\chi(-4ju-2jt(r+s))\chi\left(-\frac{Q(r\bm{\zeta}_1+s\bm{\zeta}_2)}{4u}\right).
	\end{equation*}
	By expanding the expression $Q(r\bm{\zeta}_1+s\bm{\zeta}_2)$ and applying Lemma~\ref{lemma:quadratic_gauss_sums_ax^2+bx} to the sum over $r\in\F_q$ shows that the previous expression equals
	\begin{equation*}
		G^4(\eta,\chi)\eta(-j\det(Q))\sum_{\substack{t,u\in\F_q^{\times} \\ t\neq\pm1}}\sum_{s\in\F_q}\chi\left(\frac{s^2[Q(\bm{\zeta}_1,\bm{\zeta}_2)^2-j^2]}{4uj}+2ts(Q(\bm{\zeta}_1,\bm{\zeta}_2)-j)+4uj(t^2-1)\right)
	\end{equation*}
	Notice that, by Lemma~\ref{theorem:square-gauss-sum_Fq}, we have that $G^4(\eta,\chi)=q^2$. Therefore, the constant that multiplies the sum in the previous expression simplifies to $q^{2}\eta(-j\det(Q))=-q^{2}$, since we are considering $\eta(-j\det(Q))=-1$.
	
	Next, we want to apply Lemma~\ref{lemma:quadratic_gauss_sums_ax^2+bx} to the sum over $s\in\F_q$. To do so, we must have $Q(\bm{\zeta}_1,\bm{\zeta}_2)^2-j^2\neq0$. Hence, we divide into the three cases $Q(\bm{\zeta}_1,\bm{\zeta}_2)=j$,  $Q(\bm{\zeta}_1,\bm{\zeta}_2)=-j$, and $Q(\bm{\zeta}_1,\bm{\zeta}_2)^2-j^2\neq0$.	
	
	\subsection*{Case \texorpdfstring{$Q(\bm{\zeta}_1,\bm{\zeta}_2)=j$}{(a)}}
	The previous expression simplifies considerably and it is equal to
	\begin{equation*}
		-q^2\sum_{\substack{t,u\in\F_q^{\times} \\ t\neq\pm1}}\sum_{s\in\F_q}\chi\left(4uj(t^2-1)\right)
		=
		-q^3\sum_{\substack{t\in\F_q^{\times} \\ t\neq\pm1}}\sum_{u\in\F_q^\times}\chi\left(4uj(t^2-1)\right)
		=
		q^3(q-3),
	\end{equation*}
	since $j(t^2-1)$ is never zero for all $t\neq\pm1$.
	
	\subsection*{Case \texorpdfstring{$Q(\bm{\zeta}_1,\bm{\zeta}_2)=-j$}{(b)}}
	In this case, the expression becomes
	\begin{equation*}
		-q^2\sum_{\substack{t,u\in\F_q^{\times} \\ t\neq\pm1}}\sum_{s\in\F_q}\chi\left(-4jts+4uj(t^2-1)\right)
		=
		-q^3\sum_{\substack{t,u\in\F_q^{\times} \\ t\neq\pm1}}\chi\left(4uj(t^2-1)\right)\delta(jt)
		=
		0,
	\end{equation*}
	because $t$ is never zero.
	
	\subsection*{Case \texorpdfstring{$Q(\bm{\zeta}_1,\bm{\zeta}_2)^2\neq j^2$}{(c)}}
	We start by applying Lemma~\ref{lemma:quadratic_gauss_sums_ax^2+bx} to the sum over $s\in\F_q$. This turns the previous expression into
	\begin{equation*}
		\begin{split}
			-&q^2G(\eta,\chi)\eta(j)\eta(Q(\bm{\zeta}_1,\bm{\zeta}_2)^2-j^2)\sum_{\substack{t,u\in\F_q^{\times} \\ t\neq\pm1}}\eta(u)\chi\left(4uj(t^2-1)-4ujt^2\left[\frac{Q(\bm{\zeta}_1,\bm{\zeta}_2)-j}{Q(\bm{\zeta}_1,\bm{\zeta}_2)+j}\right]\right)
			\\=
			-&q^2G(\eta,\chi)\eta(-j)\eta(j^2-Q(\bm{\zeta}_1,\bm{\zeta}_2)^2)\sum_{\substack{t,u\in\F_q^{\times} \\ t\neq\pm1}}\eta(u)\chi\left(4uj\left[\frac{2jt^2}{Q(\bm{\zeta}_1,\bm{\zeta}_2)+j}-1\right]\right).
		\end{split}
	\end{equation*}
	We shall now focus on this sum without giving importance to the preceding constant. For a final application of Lemma~\ref{lemma:quadratic_gauss_sums_ax^2+bx} to the sum over $t$, we will have to sum and subtract the respective expression with $t\in\{-1,0,1\}$. After doing so, the previous sum equals
	\begin{equation*}
		\begin{split}
			\sum_{u\in\F_q^{\times}}\eta(u)\chi(-4uj)\sum_{t\in\F_q}\chi\left(t^2\left(\frac{8uj^2}{Q(\bm{\zeta}_1,\bm{\zeta}_2)+j}\right)\right)
			&-
			\sum_{u\in\F_q^{\times}}\eta(u)\chi(-4uj)
			\\&-
			2\sum_{u\in\F_q^{\times}}\eta(u)\chi\left(4uj\left(\frac{j-Q(\bm{\zeta}_1,\bm{\zeta}_2)}{j+Q(\bm{\zeta}_1,\bm{\zeta}_2)}\right)\right).
		\end{split}
	\end{equation*}
	After applying Lemma~\ref{lemma:quadratic_gauss_sums_ax^2+bx} to the first term, and \eqref{equation:general_gauss_sums} to the other two terms, the previous expression equals
	\begin{equation*}
		\begin{split}
			&G(\eta,\chi)\eta(2(Q(\bm{\zeta}_1,\bm{\zeta}_2)+j))\sum_{u\in\F_q^{\times}}\chi(-4uj)
			-
			G(\eta,\chi)\eta(-j)[1+2\eta(Q(\bm{\zeta}_1,\bm{\zeta}_2)^2-j^2)]
			\\=&
			-G(\eta,\chi)\left[\eta(2(Q(\bm{\zeta}_1,\bm{\zeta}_2)+j))+\eta(-j)(1+2\eta(Q(\bm{\zeta}_1,\bm{\zeta}_2)^2-j^2))\right].
		\end{split}
	\end{equation*}
	
	\subsection*{Simplifying the final expression} Putting everything together, we see that \eqref{equation:sphere_before_dividing_u} equals
	\begin{equation*}
		\begin{split}
			\frac{q-3}{2}[1-&\delta_{-\bm{\zeta}_1}(\bm{\zeta}_2)+(q-1)\delta_{\bm{\zeta}_1}(\bm{\zeta}_2)+\bm{1}(Q(\bm{\zeta}_1,\bm{\zeta}_2)=j)]
			\\&+
			\frac{1}{2}\left[2+\eta(-j)\eta(2(Q(\bm{\zeta}_1,\bm{\zeta}_2)-j))+\eta(Q(\bm{\zeta}_1,\bm{\zeta}_2)^2-j^2)\right]\bm{1}(Q(\bm{\zeta}_1,\bm{\zeta}_2)^2\neq j^2),
		\end{split}
	\end{equation*}
	where we also used the fact that $G^2(\eta,\chi)=q\eta(-1)$.
	
	We are almost there. To finish the proof of the claim, we just have to show that $Q(\bm{\zeta}_1,\bm{\zeta}_2)=\pm j$ if and only if $\bm{\zeta}_2=\pm\bm{\zeta}_1$, respectively. 
	
	Sufficiency is clear. For the necessity, notice that we may write $\bm{\zeta}_2=\bm{\xi}-\bm{\zeta}_1$ for some unique $\bm{\xi}\in\F_q^3$. Indeed, $\bm{\xi}:=\bm{\zeta}_1+\bm{\zeta}_2$ implying that $Q(\bm{\xi})=2j+2Q(\bm{\zeta}_1,\bm{\zeta}_2)=2j(1\pm1)$. Therefore, if $Q(\bm{\zeta}_1,\bm{\zeta}_2)=j$, then $Q(\bm{\xi})=4j$ and from \eqref{equation:convolution_sphere} there is only one pair $(\bm{\zeta}_1,\bm{\zeta}_2)$ such that $\bm{\zeta}_1+\bm{\zeta}_2=\bm{\xi}\in\mathcal{S}_{4j}$, which is $\bm{\zeta}_1=\bm{\zeta}_2=\bm{\xi}/2$. On the other hand, if  $Q(\bm{\zeta}_1,\bm{\zeta}_2)=-j$, then $Q(\bm{\xi})=0$ and, again from \eqref{equation:convolution_sphere}, the only $\bm{\xi}\in\mathcal{S}_0$ satisfying this condition is $\bm{0}$, in which case $\bm{\zeta}_2=-\bm{\zeta}_1$, as we wanted to show.
	
	This concludes the proof of claim 1.
	
	\subsection{Proof of claim 2}\label{section:proving_claim_2}
	Recall that
	\begin{equation*}
		B_{k_0,k_1}(\bm{\zeta}_1)
		:=
		\{\bm{\zeta}_2\in\mathcal{S}_j\setminus\{\pm\bm{\zeta}_1\}: \eta(-2j(Q(\bm{\zeta}_1,\bm{\zeta}_2)+j))=k_0 \text{ and } \eta(-2j(Q(\bm{\zeta}_1,\bm{\zeta}_2)-j))=k_1\},
	\end{equation*}
	where $k_0,k_1\in\{-1,1\}$, and that we want to calculate the cardinality of these sets.
	
 	Just as in the proof of the previous claim, we will follow a counting argument. Moreover, similarly to the end of the previous proof, we take advantage of the fact that we can write any $\bm{\zeta}_2\in\mathcal{S}_j$ as $\bm{\zeta}_2=\bm{\xi}+\bm{\zeta}_1$, with $\bm{\xi}:=\bm{\zeta}_2-\bm{\zeta}_1$. Then, $Q(\bm{\xi})=-2(Q(\bm{\zeta}_1,\bm{\zeta}_2)-j)$ as before and, consequently, $-2(Q(\bm{\zeta}_1,\bm{\zeta}_2)+j)=Q(\bm{\xi})-4j$. Furthermore, since $Q(\bm{\zeta}_2)=j$, it also follows that $-Q(\bm{\xi})=2Q(\bm{\zeta}_1,\bm{\xi})$. Using these relations, we see that
 	\begin{equation*}
 		\absol{B_{k_0,k_1}(\bm{\zeta}_1)}
 		=
 		\absol{\{\bm{\xi}\in\F_q^3: \eta(j(Q(\bm{\xi})-4j))=k_0, \; \eta(jQ(\bm{\xi}))=k_1  \text{ and } 2Q(\bm{\zeta}_1,\bm{\xi})=-Q(\bm{\xi})\}},
 	\end{equation*}
 	and we will denote the set on the right-hand side as $C_{k_0,k_1}(\bm{\zeta}_1)$.
	This identity follows from the fact that, for fixed $k_0,k_1\in\{-1,1\}$ and $\bm{\zeta}_1\in\mathcal{S}_j$, the sets $B_{k_0,k_1}(\bm{\zeta}_1)$ and $C_{k_0,k_1}(\bm{\zeta}_1)$ are in bijection, given by the function $f:B_{k_0,k_1}(\bm{\zeta}_1)\to C_{k_0,k_1}(\bm{\zeta}_1)$ defined as $f(\bm{\zeta}):=\bm{\zeta}-\bm{\zeta}_1$. 
	
	Furthermore, by the definition of $C_{k_0,k_1}(\bm{\zeta}_1)$ it is clear that if $\bm{\xi}\in C_{k_0,k_1}(\bm{\zeta}_1)$, then we must have $Q(\bm{\xi})\neq0,4j$.
	
	Before we proceed, notice the following. Suppose we want to count the number of points $\bm{\xi}\in\F_q^d$ such that the quantity $g(\bm{\xi})$ is (or is not) a square in $\F_q^\times$, where $g:\F_q^d\to \F_q$ is a function. Then, the number of such points will be given by the sum
	\begin{equation*}
		\frac{1}{2}\sum_{\bm{\xi}\in g^{-1}(\F_q^\times)}[1+\eta(g(\bm{\xi}))] \text{ } \left(\text{or }
		\frac{1}{2}\sum_{\bm{\xi}\in g^{-1}(\F_q^\times)}[1-\eta(g(\bm{\xi}))],
		\text{ respectively}\right).
	\end{equation*}
	Indeed, the terms of the sum will be equal to two when the quantity $g(\bm{\xi})$ is (or is not, respectively) a square in $\F_q^\times$ and zero otherwise.
	
	We are now ready to calculate the cardinality of $B_{k_0,k_1}(\bm{\zeta}_1)$ for fixed $\bm{\zeta}_1\in\mathcal{S}_j$. Indeed, 
	\begin{equation}\label{equation:B_k-1}
		\begin{split}
			\absol{B_{k,-1}(\bm{\zeta}_1)}
			&=
			\frac{1}{4}\sum_{\bm{\xi}\in\F_q^3}[1+k\eta(j(Q(\bm{\xi})-4j))][1-\eta(jQ(\bm{\xi}))]\delta(2Q(\bm{\zeta}_1,\bm{\xi})+Q(\bm{\xi}))
			\\&=
			\frac{1}{2}\sum_{t\in\mathcal{Q}} \sum_{\tilde{Q}(\bm{\xi})=t}[1+k\eta(j(Q(\bm{\xi})-4j))]\delta(2Q(\bm{\zeta}_1,\bm{\xi})+Q(\bm{\xi})),
		\end{split}
	\end{equation}
	and 
	\begin{equation}\label{equation:B_k1}
		\begin{split}
			\absol{B_{k,1}(\bm{\zeta}_1)}
			&=
			\frac{1}{4}\sum_{\bm{\xi}\in\F_q^3}[1+k\eta(j(Q(\bm{\xi})-4j))][1+\eta(jQ(\bm{\xi}))]\delta(2Q(\bm{\zeta}_1,\bm{\xi})+Q(\bm{\xi}))
			\\&=
			\frac{1}{2}\sum_{t\in\tilde{\mathcal{Q}}^\times} \sum_{\tilde{Q}(\bm{\xi})=t}[1+k\eta(j(Q(\bm{\xi})-4j))]\delta(2Q(\bm{\zeta}_1,\bm{\xi})+Q(\bm{\xi})).
		\end{split}
	\end{equation}
	There are some similarities between these expressions. We start by showing that the sum
	\begin{equation}\label{equation:useful_equality}
		\sum_{\bm{\xi}\in\mathcal{S}_{\ell}}\delta(Q(\bm{\zeta}_1,\bm{\xi})+u)
		=
		q+\eta(-j\det(Q))[q\delta((u^2-\ell j)/j)-1]
		=
		q+[1-q\delta(u^2-\ell j)],
	\end{equation}
	for any $\ell,u\neq0$. This is extremely helpful since, letting $S\subset\mathcal{Q}\cup\tilde{\mathcal{Q}}^\times$, it follows that
	\begin{equation*}
		\frac{1}{2}\sum_{t\in S} \sum_{\tilde{Q}(\bm{\xi})=t}\delta(2Q(\bm{\zeta}_1,\bm{\xi})+Q(\bm{\xi}))
		=
		\frac{1}{2}\left(q+\left[1-q\delta\left(\frac{t\det(Q)}{4}+j\right)\right]\right)\sum_{t\in S}1
		=
		\frac{q+1}{2}\absol{S},
	\end{equation*}
	since $t\neq0,-4j/\det(Q)$ when $t\in S$. We then only have to calculate the expressions
	\begin{equation}\label{equation:remaining_expressions}
		\begin{split}
			\sum_{t\in\mathcal{Q}} \sum_{\tilde{Q}(\bm{\xi})=t}&\eta(j(Q(\bm{\xi})-4j))\delta(2Q(\bm{\zeta}_1,\bm{\xi})+Q(\bm{\xi}))
			\text{ and } \\
			\sum_{t\in\tilde{\mathcal{Q}}^\times} \sum_{\tilde{Q}(\bm{\xi})=t}&\eta(j(Q(\bm{\xi})-4j))\delta(2Q(\bm{\zeta}_1,\bm{\xi})+Q(\bm{\xi})).
		\end{split}
	\end{equation}
	
	We now show \eqref{equation:useful_equality}. By the delta function property,
	\begin{equation*}
		\begin{split}
			\sum_{\bm{\xi}\in\mathcal{S}_{\ell}}\delta(Q(\bm{\zeta}_1,\bm{\xi})+u)
			&=
			q^{-2}\sum_{r,s\in\F_q}\sum_{\bm{\xi}\in\F_q^3}\chi(sQ(\bm{\xi})+rQ(\bm{\zeta}_1,\bm{\xi})+ru-s\ell)
			\\&=
			q
			+
			q^{-2}\sum_{s\in\F_q^\times}\sum_{r\in\F_q}\sum_{\bm{\xi}\in\F_q^3}\chi(sQ(\bm{\xi})+rQ(\bm{\zeta}_1,\bm{\xi})+ru-s\ell),
		\end{split}
	\end{equation*}
	where we already calculated the term corresponding to $s=0$. By a successive application of Lemma~\ref{lemma:quadratic_gauss_sums_ax^2+bx}, first to the sum over $\bm{\xi}$ and then to the one over $r$, this expression equals
	\begin{equation*}
		\begin{split}
			&q
			+
			q^{-2}G^3(\eta,\chi)\eta(\det(Q))\sum_{s\in\F_q^\times}\sum_{r\in\F_q}\eta(s)\chi(ru-s\ell)\chi\left(-\frac{r^2j}{4s}\right)
			\\=
			&q
			+
			q^{-2}G^4(\eta,\chi)\eta(-j\det(Q))\sum_{s\in\F_q^\times}\chi(-s\ell)\chi\left(\frac{su^2}{j}\right).
		\end{split}
	\end{equation*}
	Since $G^4(\eta,\chi)=q^2$, calculating the sum over $s\in\F_q^\times$ shows \eqref{equation:useful_equality}.
	
	We now turn our attention to \eqref{equation:remaining_expressions}. Similarly to what was done in the beginning of Section~\ref{section:proving_claim_1}, any $\bm{\xi}\in C_{k_0,k_1}(\bm{\zeta}_1)$ can be written as $\bm{\xi}=t\bm{\eta}$ for some $t\in\F_q^\times$ and $\bm{\eta}\in\mathcal{S}_\ell$ with some radius $\ell$. Recall that if $\bm{\xi}=t\bm{\eta}$ then $\bm{\xi}$ is uniquely identified by the two pairs $(t,\bm{\eta})$ and $(-t,-\bm{\eta})$. Hence, given any function $g:\F_q^3\to \C$, we have the equalities
	\begin{equation*}
		\sum_{t\in\mathcal{Q}}\sum_{\tilde{Q}(\bm{\xi})=t}g(\bm{\xi})
		=
		\frac{1}{2}\sum_{t\in\F_q^\times} \sum_{\bm{\xi}\in\mathcal{S}_{-\det(Q)}}g(t\bm{\xi})
		\text{ and }
		\sum_{t\in\tilde{\mathcal{Q}}^\times}\sum_{\tilde{Q}(\bm{\xi})=t}g(\bm{\xi})
		=
		\frac{1}{2}\sum_{\substack{t\in\F_q^\times \\ t\neq\pm1}} \sum_{\bm{\xi}\in\mathcal{S}_{4j}}g(t\bm{\xi})
		,
	\end{equation*}
	since $\eta(-\det(Q)(-\det(Q)t^2))=\eta(t^2)=1$ and $\eta(-4j\det(Q)t^2)=\eta(-j\det(Q))=-1$.
	
	Applying this to \eqref{equation:remaining_expressions} we see that we just have to calculate the expressions:
	\begin{align}
		\frac{1}{2}\sum_{t\in\F_q^\times} \sum_{\bm{\xi}\in\mathcal{S}_{-\det(Q)}}\eta(-j(t^2\det(&Q)+4j))\delta(2Q(\bm{\zeta}_1,\bm{\xi})-t\det(Q)) \label{equation:B_k-1_final}
		\\
		\text{ and}\quad \frac{1}{2}\sum_{\substack{t\in\F_q^\times \\ t\neq\pm1}} \sum_{\bm{\xi}\in\mathcal{S}_{4j}}\eta(t^2-&1)\delta(Q(\bm{\zeta}_1,\bm{\xi})+2jt).\label{equation:B_k1_final}
	\end{align}
	Thanks to \eqref{equation:useful_equality}, these two expressions are equal to
	\begin{align*}
		\frac{q+1}{2}\sum_{t\in\F_q^\times} \eta(-j(t^2\det(Q)+4j))
		\quad\text{and}\quad
		\frac{q+1}{2}\sum_{\substack{t\in\F_q^\times \\ t\neq\pm1}} \eta(t^2-1), \text{ respectively,}
	\end{align*}
	where in the first equation we used the fact that $-4j/\det(Q)\neq0$ is never a square.
	
	All that is left to do is calculating the remaining two sums. Using \eqref{equation:general_gauss_sums} and the fact that $\eta(-j(t^2\det(Q)+4j))=-\eta(t^2+4j/\det(Q))$, we have that
	\begin{equation*}
		\begin{split}
			\sum_{t\in\F_q^\times} \eta(-j(t^2\det(Q)+4j))
			&=
			-G^{-1}(\eta,\chi)\sum_{r,t\in\F_q^\times}\eta(r)\chi((t^2+4j/\det(Q))r)
			\\&=
			-G^{-1}(\eta,\chi)\sum_{r\in\F_q^\times}\eta(r)\chi(4jr/\det(Q))\Biggl(\sum_{t\in\F_q}\chi(rt^2)-1\Biggr)
			\\&=
			-\sum_{r\in\F_q^\times}\chi(4jr/\det(Q))+\eta(j\det(Q))
			=
			1-\eta(-1),
		\end{split}
	\end{equation*}
	where we used Lemma~\ref{lemma:quadratic_gauss_sums_ax^2+bx} in the last line. By a similar reasoning, 
	\begin{equation*}
		\begin{split}
			\sum_{\substack{t\in\F_q^\times \\ t\neq\pm1}}
			\eta(t^2-1)
			=
			-(1+\eta(-1)).
		\end{split}
	\end{equation*}
	
	Putting all of this together, one sees that
	\begin{align*}
		\absol{B_{k,-1}(\bm{\zeta}_1)}
		&=
		\frac{q+1}{2}\frac{q-1}{2}+\frac{k}{4}(q+1)(1-\eta(-1))
		=
		\frac{1}{4}(q+1)[q-1+k(1-\eta(-1))],
		\\
		\absol{B_{k,1}(\bm{\zeta}_1)}
		&=
		\frac{q+1}{2}\frac{q-3}{2}-\frac{k}{4}(q+1)(1+\eta(-1))
		=
		\frac{1}{4}(q+1)[q-3-k(1+\eta(-1))].
	\end{align*}
	
	This finishes the proof of Theorem~\ref{theorem:sharp_L2-L4_2-sphere_jnotsquare}.

	\section{Proof of Theorem \ref{theorem:sharp_L2-L4_2-cones}}\label{section:proof_cones}
	From Sections \ref{section:overview} and \ref{section:symmetrization}, we just have to establish the corresponding sharp inequality~\eqref{equation:starting} for all non-negative even functions $f:\mathcal{S}_{\times}^2\to\C$ with
	\begin{equation*}
		{\bf C}^\ast_{\mathcal{S}_{\times}^2}(2\to4)
		=
		q+1-\frac{3q-4}{\absol{\mathcal{S}_{\times}^2}}
		=
		q+1-\frac{3}{q+1}+\frac{1}{q^2-1}.
	\end{equation*}
	As in the proof of Theorem \ref{theorem:sharp_L2-L4_2-sphere_jnotsquare}, from now on we shall drop the superscript $2$ in $\mathcal{S}_{\times}^2$ and $\mathcal{S}^2_k$ for any $k\neq0$, and we shall write $\tilde{Q}(\bm{\xi}):=-Q(\bm{\xi})/\det(Q)$ and $\tilde{Q}(\bm{\xi},\bm{\zeta}):=-Q(\bm{\xi},\bm{\zeta})/\det(Q)$.
	
	\subsection*{Geometric structure of \texorpdfstring{$\mathcal{S}_{\times}$}{the spheres with zero radius without the origin}}
	By the same reasoning as in the partitioning of $\mathcal{S}_j$ into two spherical caps in Section \ref{section:partitioning_sphere}, we can write $\mathcal{S}_{\times}$ as the disjoint union of $\mathcal{S}_{\times}^+$ and $-\mathcal{S}_{\times}^+$, which we call the positive and negative parts of $\mathcal{S}_{\times}$, respectively.
	
	We can also write $\mathcal{S}_{\times}$ as the disjoint union of $q+1$ distinct lines. Indeed, the relation $\bm{\zeta}\sim\bm{\eta}$ if and only if $\bm{\zeta}=\alpha\bm{\eta}$ for any $\alpha\in\F_q^\times$ is an equivalence relation. The resulting equivalence classes $[\bm{\zeta}]$ correspond to distinct punctured lines $\mathcal{L}_{\bm{\zeta}}:=\{\alpha\bm{\zeta}:r\in\F_q^\times\}$ in $\mathcal{S}_{\times}$. It follows that 
	\begin{equation*}
		\mathcal{S}_{\times}
		=
		\bigcup_{[\bm{\zeta}]\in\mathcal{S}_{\times}/\sim}\mathcal{L}_{\bm{\zeta}}.
	\end{equation*}
	Because these punctured lines all have $q-1$ elements, we have that $\absol{\mathcal{S}_{\times}/\sim}=\absol{\mathcal{S}_{\times}}/(q-1)=(q^2-1)/(q-1)=q+1$. That is, $\mathcal{S}_{\times}$ contains $q+1$ distinct punctured lines, each with $q-1$ points. Also, notice that these line segments do not depend on the choice of representatives. The same is not true for the sets $\mathcal{L}_{\bm{\zeta}}^\circ:=\mathcal{L}_{\bm{\zeta}}\setminus\{\bm{\zeta}\}$.
	
	From this and \eqref{equation:convolution_cone}, one may verify that
	\begin{equation*}
		\Sigma^2_{\mathcal{S}_{\times}}(\bm{0})=\{(\bm{\zeta},-\bm{\zeta}):\bm{\zeta}\in\mathcal{S}_{\times}\}
		\text{ and }
		\Sigma^2_{\mathcal{S}_{\times}}(\bm{\xi})=\{(\bm{\zeta},\bm{\xi}-\bm{\zeta}):\bm{\zeta}\in\mathcal{L}_{\bm{\xi}}^{\circ}\}
		\text{ for all } 
		\bm{\xi}\in\mathcal{S}_{\times}.
	\end{equation*}
	
	We are now in conditions to start the proof. Just like before,
	\begin{equation*}
		Q_{\mathcal{S}_{\times}}(f,f,f,f)
		=
		\Biggl(\sum_{\mathcal{S}_{\times}}f^2\Biggr)^2
		+
		\sum_{\bm{\xi}\in\mathcal{S}_{\times}}\Biggl(\sum_{\mathcal{S}_{\times},\bm{\xi}}f\cdot f\Biggr)^2
		+
		\sum_{\tilde{Q}(\bm{\xi})\in\mathcal{Q}}\Biggl(\sum_{\mathcal{S}_{\times},\bm{\xi}}f\cdot f\Biggr)^2
		+
		\sum_{\tilde{Q}(\bm{\xi})\in\tilde{\mathcal{Q}}}\Biggl(\sum_{\mathcal{S}_{\times},\bm{\xi}}f\cdot f\Biggr)^2.
	\end{equation*}
	We first look at the case where $-1$ is not a square in $\F_q^{\times}$ since it is simpler.
	
	\subsection{The case \texorpdfstring{$\eta(-1)=-1$}{eta(-1)=-1}}\label{section:cone_-1_not_square}
	We can apply the Cauchy--Schwarz inequality to each term on the right and, by mass transport, we have that
	\begin{equation}\label{equation:easy_case_cone}
		Q_{\mathcal{S}_{\times}}(f,f,f,f)
		\leq
		q\Biggl(\sum_{\mathcal{S}_{\times}}f^2\Biggr)^2
		-
		(q-1)\sum_{\mathcal{S}_{\times}}f^4
		-
		\sum_{\bm{\xi}\in\mathcal{S}_{\times}}\sum_{\mathcal{S}_{\times},\bm{\xi}}f^2\cdot f^2
		+
		2\sum_{\tilde{Q}(\bm{\xi})\in \tilde{\mathcal{Q}}}\sum_{\mathcal{S}_{\times},\bm{\xi}}f^2\cdot f^2.
	\end{equation}
	Thanks to the decomposition of $\mathcal{S}_{\times}$ into a disjoint union of punctured lines, the third term in the right-hand side of \eqref{equation:easy_case_cone},
	\begin{align}
			\sum_{\bm{\xi}\in\mathcal{S}_{\times}}\sum_{\mathcal{S}_{\times},\bm{\xi}}&f^2\cdot f^2
			=
			\sum_{\bm{\zeta}_1,\bm{\zeta}_2\in\mathcal{S}_{\times}}f^2(\bm{\zeta}_1) f^2(\bm{\zeta}_2)\bm{1}\left(\bm{\zeta}_1+\bm{\zeta}_2 \in\mathcal{S}_{\times}\right)
			\nonumber\\&=
			\sum_{[\bm{s}]\in\mathcal{S}_{\times}/\sim}\sum_{\substack{\alpha_1,\alpha_2\in\F_q^\times\\\alpha_1\neq-\alpha_2}}f^2(\alpha_1\bm{s})f^2(\alpha_2\bm{s})
			=
			\sum_{[\bm{s}]\in\mathcal{S}_{\times}/\sim}\Biggl(\sum_{\alpha\in\F_q^\times}f^2(\alpha\bm{s})\Biggr)^2
			-
			\sum_{\mathcal{S}_{\times}}f^4.\label{equation:cone_middleterm}
	\end{align}
	
	On the other hand, by interchanging the sums of the last term in \eqref{equation:easy_case_cone},
	\begin{equation}\label{equation:cone_notsquare}
		\begin{split}
			2\sum_{\tilde{Q}(\bm{\xi})\in \tilde{\mathcal{Q}}}\sum_{\mathcal{S}_{\times},\bm{\xi}}f^2\cdot f^2
			&=
			2\sum_{\bm{\zeta}_1,\bm{\zeta}_2\in\mathcal{S}_{\times}}f^2(\bm{\zeta}_1)f^2(\bm{\zeta}_2)\bm{1}\left(\tilde{Q}(\bm{\zeta}_1+\bm{\zeta}_2)\in\tilde{Q}\right)
			\\&=
			\sum_{\substack{\bm{\zeta}_1,\bm{\zeta}_2\in\mathcal{S}_{\times}\\\bm{\zeta}_2\notin\mathcal{L}_{\bm{\zeta}_1}}}f^2(\bm{\zeta}_1)f^2(\bm{\zeta}_2)(1-\eta(\tilde{Q}(\bm{\zeta}_1+\bm{\zeta}_2)).
		\end{split}
	\end{equation}
	Notice that $\tilde{Q}(\bm{\zeta}_1+\bm{\zeta}_2)=2\tilde{Q}(\bm{\zeta}_1,\bm{\zeta}_2)$. Moreover, because $\eta(-1)=-1$ and $\mathcal{S}_{\times}$ can be written as the disjoint union of $\mathcal{S}_{\times}^+$ and $-\mathcal{S}_{\times}^+$, we have that the previous expression equals
	\begin{align}
			&\sum_{\substack{\bm{\zeta}_1,\bm{\zeta}_2\in\mathcal{S}_{\times}\\\bm{\zeta}_2\notin\mathcal{L}_{\bm{\zeta}_1}}}f^2(\bm{\zeta}_1)f^2(\bm{\zeta}_2)(1-\eta(2\tilde{Q}(\bm{\zeta}_1,\bm{\zeta}_2)))
			=
			2\sum_{\substack{\bm{\zeta}_1\in\mathcal{S}_{\times}^+,\bm{\zeta}_2\in\mathcal{S}_{\times}\\\bm{\zeta}_2\notin\mathcal{L}_{\bm{\zeta}_1}}}f^2(\bm{\zeta}_1)f^2(\bm{\zeta}_2)
			\nonumber\\\label{equation:cone_linerewrite}=
			&\left(\sum_{\mathcal{S}_{\times}}f^2\right)^2
			-
			\sum_{\bm{\zeta}\in\mathcal{S}_{\times}}f^2(\bm{\zeta})\sum_{\alpha\in\F_q^\times}f^2(\alpha\bm{\zeta})
			=
			\left(\sum_{\mathcal{S}_{\times}}f^2\right)^2
			-
			\sum_{[\bm{s}]\in\mathcal{S}_{\times}/\sim}\Biggl(\sum_{\alpha\in\F_q^\times}f^2(\alpha\bm{s})\Biggr)^2,
	\end{align}
	where we also used the fact that $f$ is even.
	
	Putting this together, the right-hand of \eqref{equation:easy_case_cone} equals
	 \begin{equation}\label{equation:cone_almost_there_easy_case}
	 	(q+1)\Biggl(\sum_{\mathcal{S}_{\times}}f^2\Biggr)^2
	 	-
	 	(q-2)\sum_{\mathcal{S}_{\times}}f^4
	 	-
	 	2\sum_{[\bm{s}]\in\mathcal{S}_{\times}/\sim}\Biggl(\sum_{\alpha\in\F_q^\times}f^2(\alpha\bm{s})\Biggr)^2.
	 \end{equation}
	By Cauchy--Schwarz we have that
	\begin{equation} \label{equation:cone_inequality_for_the_gamma_term}
		\left(\sum_{\mathcal{S}_{\times}}f^2\right)^2
		\leq
		(q+1)\sum_{[\bm{s}]\in\mathcal{S}_{\times}/\sim}\Biggl(\sum_{\alpha\in\F_q^\times}f^2(\alpha\bm{s})\Biggr)^2,
	\end{equation}
	which becomes an equality if and only if the sums $\sum_{\alpha\in\F_q^\times}f^2(\alpha\bm{s})=C$ for all $[\bm{s}]\in\mathcal{S}_{\times}/\sim$, with $C$ a constant, and
	\begin{equation}\label{equation:cone_final_cauchy_cone}
		\left(\sum_{\mathcal{S}_{\times}}f^2\right)^2
		\leq
		(q^2-1)\sum_{\mathcal{S}_{\times}}f^4,
	\end{equation}
	with equality if and only if $f$ is constant. Applying both of these inequalities to \eqref{equation:cone_almost_there_easy_case} we get the desired result. Once again, the extension inequality \eqref{equation:sharp_L2-L4_2-cones} is sharp when $\eta(-1)=-1$ since constant functions turn each step of the proof (including the symmetrization argument) into an equality.
	
	We now turn to the case where $-1$ is a square in $\F_q^{\times}$.
	
	\subsection{The case \texorpdfstring{$\eta(-1)=1$}{eta(-1)=1}}
	Similarly to what was done in Section \ref{section:sphere_mass_transport}, we start by applying the Cauchy--Schwarz inequality to the term over $\tilde{Q}(\bm{\xi})\in\mathcal{Q}$.
	\begin{equation*}
		\sum_{\tilde{Q}(\bm{\xi})\in\mathcal{Q}}\Biggl(\sum_{\mathcal{S}_{\times},\bm{\xi}}f\cdot f\Biggr)^2
		\leq
		(q-1-x)\sum_{\tilde{Q}(\bm{\xi})\in\mathcal{Q}}\sum_{\mathcal{S}_{\times},\bm{\xi}}f^2\cdot f^2
		+
		x\sum_{\tilde{Q}(\bm{\xi})\in\mathcal{Q}}\sum_{\mathcal{S}_{\times},\bm{\xi}}f^2\cdot f^2,
	\end{equation*}
	which turns into an equality if and only if for every fixed $\bm{\xi}$ such that $\tilde{Q}(\bm{\xi})\in\mathcal{Q}$, the identity $f(\bm{\zeta})f(\bm{\xi}-\bm{\zeta})=C(\bm{\xi})$ holds for every $\bm{\zeta}\in\mathcal{S}_{\times}$ such that $\bm{\xi}-\bm{\zeta}\in\mathcal{S}_{\times}$, where $C(\bm{\xi})$ is a constant depending solely on $\bm{\xi}$. Here, $0\leq x\leq q-1$.
	
	Moreover, the inequality 
	\begin{equation*}
		\sum_{\bm{\xi}\in\mathcal{S}_{\times}}\Biggl(\sum_{\mathcal{S}_{\times},\bm{\xi}}f\cdot f\Biggr)^2
		\leq
		(q-2)\sum_{\bm{\xi}\in\mathcal{S}_{\times}}\sum_{\mathcal{S}_{\times},\bm{\xi}}f^2\cdot f^2
	\end{equation*}
	turns into an equality if and only if for every fixed $\bm{\xi}\in\mathcal{S}_{\times}$, the identity $f(\bm{\zeta})f(\bm{\xi}-\bm{\zeta})=C(\bm{\xi})$ holds for every $\bm{\zeta}\in\mathcal{L}^{\circ}_{\bm{\xi}}$, where $C(\bm{\xi})$ is a constant depending solely on $\bm{\xi}$.
	
	Notice that we used these same inequalities in the beginning of Section \ref{section:cone_-1_not_square} with $x=0$, and that the conditions for equality are the same.
	
	By mass transport, we have that
	\begin{equation}\label{equation:cone_general_case}
		\begin{split}
			Q_{\mathcal{S}_{\times}}(f,f,f,f)
			\leq
			(q-x)\Biggl(\sum_{\mathcal{S}_{\times}}f^2\Biggr)^2
			-
			(q-1-x)\sum_{\mathcal{S}_{\times}}f^4
			-
			(1-x)\sum_{\bm{\xi}\in\mathcal{S}_{\times}}\sum_{\mathcal{S}_{\times},\bm{\xi}}f^2\cdot f^2
			\\+
			x\sum_{\tilde{Q}(\bm{\xi})\in\mathcal{Q}}\sum_{\mathcal{S}_{\times},\bm{\xi}}f^2\cdot f^2
			+
			\sum_{\tilde{Q}(\bm{\xi})\in\tilde{\mathcal{Q}}}\Bigg[\Biggl(\sum_{\mathcal{S}_{\times},\bm{\xi}}f\cdot f\Biggr)^2
			-
			(q-1-x)\sum_{\mathcal{S}_{\times},\bm{\xi}}f^2\cdot f^2\Biggr].
		\end{split}
	\end{equation}
	Just as in the proof of Theorem \ref{theorem:sharp_L2-L4_2-sphere_jnotsquare}, we now focus on the last term of this inequality.
	
	\subsection{Understanding the final term}
	An application of the Cauchy--Schwarz inequality to the last term yields
	\begin{equation*}
		\sum_{\tilde{Q}(\bm{\xi})\in\tilde{\mathcal{Q}}}\Bigg[\Biggl(\sum_{\mathcal{S}_{\times},\bm{\xi}}f\cdot f\Biggr)^2
		-
		(q-1-x)\sum_{\mathcal{S}_{\times},\bm{\xi}}f^2\cdot f^2\Biggr]
		\leq
		\left(\frac{2+x}{q+1}\right)\sum_{\tilde{Q}(\bm{\xi})\in\tilde{\mathcal{Q}}}\Biggl(\sum_{\mathcal{S}_{\times},\bm{\xi}}f\cdot f\Biggr)^2.
	\end{equation*}
	This inequality becomes an equality if and only if for every fixed $\bm{\xi}$ such that $\tilde{Q}(\bm{\xi})\in\tilde{\mathcal{Q}}$, the identity $f(\bm{\zeta})f(\bm{\xi}-\bm{\zeta})=C(\bm{\xi})$ holds for every $\bm{\zeta}\in\mathcal{S}_{\times}$ such that $\bm{\xi}-\bm{\zeta}\in\mathcal{S}_{\times}$, where $C(\bm{\xi})$ is a constant depending solely on $\bm{\xi}$.
	
	Notice that we may write the term on the right-hand side of the previous inequality as
	\begin{equation*}
		\sum_{\tilde{Q}(\bm{\xi})\in\tilde{\mathcal{Q}}}\Biggl(\sum_{\mathcal{S}_{\times},\bm{\xi}}f\cdot f\Biggr)^2
		=
		\sum_{\tilde{Q}(\bm{\xi})\in\tilde{\mathcal{Q}}}\sum_{\mathcal{S}_{\times},\bm{\xi}}f^2\cdot f^2
		+
		\sum_{\tilde{Q}(\bm{\xi})\in\tilde{\mathcal{Q}}}\sum_{\substack{\bm{\zeta}_1,\bm{\zeta}_2\in\mathcal{S}_{\times}\\\bm{\xi}-\bm{\zeta}_1,\bm{\xi}-\bm{\zeta}_2\in\mathcal{S}_{\times}\\\bm{\zeta}_2\neq\bm{\zeta}_1}}f(\bm{\zeta}_1)f(\bm{\xi}-\bm{\zeta}_1)f(\bm{\zeta}_2)f(\bm{\xi}-\bm{\zeta}_2)
	\end{equation*}
	By the same reasoning as in Section \ref{section:proving_claim_1}, we apply the AM-GM inequality to the last term on the right-hand side of this equality
	\begin{equation}\label{equation:cone_finalterm}
		\begin{split}
			\sum_{\tilde{Q}(\bm{\xi})\in\tilde{\mathcal{Q}}}\sum_{\substack{\bm{\zeta}_1,\bm{\zeta}_2\in\mathcal{S}_{\times}\\\bm{\xi}-\bm{\zeta}_1,\bm{\xi}-\bm{\zeta}_2\in\mathcal{S}_{\times}\\\bm{\zeta}_2\neq\bm{\zeta}_1}}&f(\bm{\zeta}_1)f(\bm{\xi}-\bm{\zeta}_1)f(\bm{\zeta}_2)f(\bm{\xi}-\bm{\zeta}_2)
			\\&\leq
			\sum_{\substack{\bm{\zeta}_1,\bm{\zeta}_2\in\mathcal{S}_{\times}\\\bm{\zeta}_2\neq\bm{\zeta}_1}}f^2(\bm{\zeta}_1)f^2(\bm{\zeta}_2)\sum_{\tilde{Q}(\bm{\xi})\in \tilde{Q}}\bm{1}(\bm{\xi}-\bm{\zeta}_1,\bm{\xi}-\bm{\zeta}_2\in\mathcal{S}_{\times}),
		\end{split}
	\end{equation}
	which turns into an equality if and only if for every fixed $\bm{\xi}$ such that $\tilde{Q}(\bm{\xi})\in\tilde{\mathcal{Q}}$, the identity $f(\bm{\zeta}_1)f(\bm{\zeta}_2)=f(\bm{\xi}-\bm{\zeta}_1)f(\bm{\xi}-\bm{\zeta}_2)$ holds for every $\bm{\zeta}_1\in\mathcal{S}_{\times}$ and $\bm{\zeta}_1\neq\bm{\zeta}_2\in\mathcal{S}_{\times}$ such that $\bm{\xi}-\bm{\zeta}_i\in\mathcal{S}_{\times}$ for $i\in\{1,2\}$.
	
	Similarly to what was done in the proof of Theorem \ref{theorem:sharp_L2-L4_2-sphere_jnotsquare}, we claim that
	\begin{equation}\label{equation:cone_claim}
		\sum_{\tilde{Q}(\bm{\xi})\in \tilde{\mathcal{Q}}}\bm{1}(\bm{\xi}-\bm{\zeta}_1,\bm{\xi}-\bm{\zeta}_2\in\mathcal{S}_{\times})
		=
		\mathfrak{C}(\bm{\zeta}_1,\bm{\zeta}_2),
	\end{equation}
	for all $\bm{\zeta}_1,\bm{\zeta}_2\in\mathcal{S}_{\times}$, where 
	\begin{equation*}
		\begin{split}
			\mathfrak{C}(\bm{\zeta}_1,\bm{\zeta}_2)
			:=
			\frac{q-1}{2}\left[\bm{1}_{\mathcal{S}_{\times}\setminus\mathcal{L}_{\bm{\zeta}_1}}(\bm{\zeta}_2)+q\delta_{\bm{\zeta}_1}(\bm{\zeta}_2)\right]
			+
			\frac{1}{2}\left[1-\eta(-2\tilde{Q}(\bm{\zeta}_1,\bm{\zeta}_2))\right]\bm{1}_{\mathcal{S}_{\times}\setminus\mathcal{L}_{\bm{\zeta}_1}}(\bm{\zeta}_2).
		\end{split}
	\end{equation*}
	We will show the claim at the end of the proof. If we use this identity, we see that
	\begin{equation*}
		\sum_{\tilde{Q}(\bm{\xi})\in\tilde{\mathcal{Q}}}\Biggl(\sum_{\mathcal{S}_{\times},\bm{\xi}}f\cdot f\Biggr)^2
		\leq
		\frac{q-1}{2}\left[\left(\sum_{\mathcal{S}_{\times}} f^2\right)^2
		-
		\sum_{\bm{\zeta}\in\mathcal{S}_{\times}}f^2(\bm{\zeta})\sum_{\alpha\in\F_q^\times}f^2(\alpha\bm{\zeta})
		\right]
		+
		2\sum_{\tilde{Q}(\bm{\xi})\in\tilde{\mathcal{Q}}}\sum_{\mathcal{S}_{\times},\bm{\xi}}f^2\cdot f^2.
	\end{equation*}
	Here, we also used the fact that $\eta(-1)=1$ and the identity
	\begin{equation*}
		\frac{1}{2}\sum_{\substack{\bm{\zeta}_1,\bm{\zeta}_2\in\mathcal{S}_{\times}\\\bm{\zeta}_2\notin\mathcal{L}_{\bm{\zeta}_1}}}f^2(\bm{\zeta}_1)f^2(\bm{\zeta}_2)(1-\eta(2\tilde{Q}(\bm{\zeta}_1,\bm{\zeta}_2)))
		=
		\sum_{\tilde{Q}(\bm{\xi})\in\tilde{\mathcal{Q}}}\sum_{\mathcal{S}_{\times},\bm{\xi}}f^2\cdot f^2,
	\end{equation*}
	derived from the exact same reasoning as in \eqref{equation:cone_notsquare}.
	
	We can take advantage of the fact that
	\begin{equation*}
		\sum_{\tilde{Q}(\bm{\xi})\in\tilde{\mathcal{Q}}}\sum_{\mathcal{S}_{\times},\bm{\xi}}f^2\cdot f^2
		+
		\sum_{\tilde{Q}(\bm{\xi})\in\mathcal{Q}}\sum_{\mathcal{S}_{\times},\bm{\xi}}f^2\cdot f^2
		=
		\left(\sum_{\mathcal{S}_{\times}} f^2\right)^2
		-
		\sum_{\bm{\zeta}\in\mathcal{S}_{\times}}f^2(\bm{\zeta})\sum_{\alpha\in\F_q^\times}f^2(\alpha\bm{\zeta}),
	\end{equation*}
	by choosing $x$ such that $x=(4+2x)/(q+1)$. This makes $x=4/(q-1)$ and by putting all of this together we see that we can bound \eqref{equation:cone_general_case} from above by
	\begin{equation*}
		\begin{split}
			\left(q+1\right)\left(\sum_{\mathcal{S}_{\times}}f^2\right)^2
			&-
			\left(1-\frac{4}{q-1}\right)\sum_{\bm{\xi}\in\mathcal{S}_{\times}}\sum_{\mathcal{S}_{\times},\bm{\xi}}f^2\cdot f^2
			\\
			&-
			\left(q-1-\frac{4}{q-1}\right)\sum_{\mathcal{S}_{\times}}f^4
			-
			\left(1+\frac{4}{q-1}\right)\sum_{[\bm{s}]\in\mathcal{S}_{\times}/\sim}\left(\sum_{\alpha\in\F_q^\times}f^2(\alpha\bm{s})\right)^2,
		\end{split}
	\end{equation*}
	where we also used identity \eqref{equation:cone_linerewrite} to rewrite the last term.
	
	Then, thanks to equation \eqref{equation:cone_middleterm} the previous expression simplifies to
	\begin{equation*}
		\begin{split}
			(q+1)\left(\sum_{\mathcal{S}_{\times}}f^2\right)^2
			-
			\left(q-2\right)\sum_{\mathcal{S}_{\times}}f^4
			-
			2
			\sum_{[\bm{s}]\in\mathcal{S}_{\times}/\sim}\left(\sum_{\alpha\in\F_q^\times}f^2(\alpha\bm{s})\right)^2.
		\end{split}
	\end{equation*}
	
	Finally, applying the Cauchy--Schwarz inequalities \eqref{equation:cone_inequality_for_the_gamma_term} and \eqref{equation:cone_final_cauchy_cone} to this expression gets us to the desired result. Notice once again that this inequality is sharp since constant functions turn each step of the proof into an equality, including the symmetrization argument. 
	
	We refer the reader to Section \ref{section:maximizers_const_mod} for a proof that the maximizers of \eqref{equation:sharp_L2-L4_2-cones} have constant modulus.
	
	Just as in the proof of Theorem \ref{theorem:sharp_L2-L4_2-sphere_jnotsquare}, the symmetrization argument plays a critical role in both cases. Without it, the constant that multiplies the $L^4$ norm of $f$ that appears in the final instances of the proof would be positive. Hence, one would be unable to apply the Cauchy--Schwarz inequality to finish the proof.
	
	We now head into the proof of the claim \eqref{equation:cone_claim}.
	
	\subsection{Proof of the claim}
	Just like the proof of claim 1 in Section \ref{section:proving_claim_1}, our claim will follow from a successive application of Lemma~\ref{lemma:quadratic_gauss_sums_ax^2+bx} and identity \eqref{equation:nonprincipal_character_sum}. 
	
	The set
	\begin{equation*}
		\{\bm{\xi}\in\F_q^3:\tilde{Q}(\bm{\xi})\in\tilde{\mathcal{Q}}\}
		=
		\bigcup_{t\in\tilde{\mathcal{Q}}}\mathcal{S}_{-t\det(Q)}
		=
		\bigcup_{t\in\F_q^\times}\mathcal{S}_{-jt^2\det(Q)},
	\end{equation*}
	for some $j\in\tilde{\mathcal{Q}}$, which we fix from now on.
	This means that given $\bm{\xi}$ such that $\tilde{Q}(\bm{\xi})\in\tilde{\mathcal{Q}}$, there is $\bm{\eta}\in\mathcal{S}_{-j\det(Q)}$ and $t\in\F_q^\times$ such that $\bm{\xi}=t\bm{\eta}$ (and that there are only two possible representations of $\bm{\xi}$ with such pair, namely $(t,\bm{\eta})$ and $(-t,-\bm{\eta})$). Therefore, just as in Section \ref{section:proving_claim_1}, for any such $\bm{\xi}$ and $\bm{\zeta}\in\mathcal{S}_{\times}$,
	\begin{equation*}
		Q(\bm{\xi}-\bm{\zeta})=0
		\Leftrightarrow
		Q(\bm{\xi})=2Q(\bm{\xi},\bm{\zeta})
		\Leftrightarrow
		-jt\det(Q)=2Q(\bm{\eta},\bm{\zeta})
		\Leftrightarrow
		jt=2\tilde{Q}(\bm{\eta},\bm{\zeta}).
	\end{equation*}
	
	Consequently, the left-hand side of \eqref{equation:cone_claim} equals
	\begin{align}
		&\frac{1}{2}\sum_{\tilde{Q}(\bm{\xi})=j}\sum_{t\in\F_q^{\times}}\delta(2\tilde{Q}(\bm{\xi},\bm{\zeta}_1)-jt)\delta(2\tilde{Q}(\bm{\xi},\bm{\zeta}_2)-jt)
		\nonumber\\
		\label{equation:cone_before_dividing_u}=
		&\frac{q^{-3}}{2}\sum_{u\in\F_q}\Biggl(\sum_{\bm{\xi}\in\F_q^3}\sum_{t\in\F_q^{\times}}\sum_{r,s\in\F_q}\chi(u(\tilde{Q}(\bm{\xi})-j))\chi(r(2\tilde{Q}(\bm{\xi},\bm{\zeta}_1)-jt))\chi(s(2\tilde{Q}(\bm{\xi},\bm{\zeta}_2)-jt))\Biggr).
	\end{align}
	We divide the previous sum into $u=0$ and $u\in\F_q^\times$, and treat each sum separately as before.
	
	When $u=0$, the expression inside the parenthesis simplifies to
	\begin{equation*}
		\begin{split}
			\sum_{t\in\F_q^{\times}}\sum_{r,s\in\F_q}\chi(-jt(r+s))&\sum_{\bm{\xi}\in\F_q^3}\chi(\tilde{Q}(\bm{\xi},2(r\bm{\zeta}_1+s\bm{\zeta}_2)))
			\\=&
			q^3\sum_{t\in\F_q^{\times}}\sum_{r,s\in\F_q}\chi(-jt(r+s))\delta(r\bm{\zeta}_1+s\bm{\zeta}_2),
		\end{split}
	\end{equation*}
	since $\det(Q)\neq0$. Further dividing this into $r=0$ and $r\in\F_q^\times$, we see that when $r=0$ this equals
	\begin{equation*}
		\begin{split}
			q^3\sum_{t\in\F_q^{\times}}\sum_{s\in\F_q}\chi(-jts)\delta(s\bm{\zeta}_2)
			=
			q^3(q-1).
		\end{split}
	\end{equation*}
	The remaining sum over $r\in\F_q^\times$ then forces $\bm{\zeta}_1=-\frac{s}{r}\bm{\zeta}_2$. In other words, this expression is nonzero only when $\bm{\zeta}_2=\alpha\bm{\zeta}_1$ for some $\alpha\in\F_q^\times$. Therefore, the remaining sum over $r\in\F_q^\times$ becomes
	\begin{equation*}
		q^3\bm{1}(\bm{\zeta}_2=\alpha\bm{\zeta}_1)\sum_{r,t\in\F_q^{\times}}\chi\left(jtr\left(\frac{1}{\alpha}-1\right)\right)
		=
		q^3(q-1)^2\bm{1}(\bm{\zeta}_2=\bm{\zeta}_1)
		-
		q^3(q-1)\bm{1}(\bm{\zeta}_2=\alpha\bm{\zeta}_1, \alpha\neq1).
	\end{equation*}
	In summary, when $u=0$ we have that the expression in \eqref{equation:cone_before_dividing_u} inside the parenthesis equals
	\begin{equation}\label{equation:cone_u=0}
		q^3(q-1)\left[1-\bm{1}_{\mathcal{L}_{\bm{\zeta}_1}^\circ}(\bm{\zeta}_2)+(q-1)\delta_{\bm{\zeta}_1}(\bm{\zeta}_2)\right]
		=
		q^3(q-1)\left[\bm{1}_{\mathcal{S}_\times\setminus\mathcal{L}_{\bm{\zeta}_1}}(\bm{\zeta}_2)+q\delta_{\bm{\zeta}_1}(\bm{\zeta}_2)\right].
	\end{equation}
	
	On the remaining sum over $u\in\F_q^\times$, we first simplify the sum over $\bm{\xi}\in\F_q^3$ using Lemma~\ref{lemma:quadratic_gauss_sums_ax^2+bx}:
	\begin{equation*}
		\begin{split}
			\sum_{\bm{\xi}\in\F_q^3}\chi(u\tilde{Q}(\bm{\xi})+\tilde{Q}(\bm{\xi},2(r\bm{\zeta}_1+s\bm{\zeta}_2)))
			=
			G^3(\eta,\chi)\eta(-u)\chi\left(-\frac{\tilde{Q}(r\bm{\zeta}_1+s\bm{\zeta}_2)}{u}\right).
		\end{split}
	\end{equation*}
	Therefore, the remaining sum over $u\in\F_q^\times$ inside the parenthesis in \eqref{equation:cone_before_dividing_u} simplifies to
	\begin{equation*}
		qG(\eta,\chi)\sum_{u,t\in\F_q^\times}\eta(u)\sum_{r,s\in\F_q}\chi(-uj-jtr-jts)\chi\left(-\frac{2rs}{u}\tilde{Q}(\bm{\zeta}_1,\bm{\zeta}_2)\right),
	\end{equation*}
	where we used the fact that $G^2(\eta,\chi)=q\eta(-1)$ and expanded the quantity $\tilde{Q}(r\bm{\zeta}_1+s\bm{\zeta}_2)$.
	If $\tilde{Q}(\bm{\zeta}_1,\bm{\zeta}_2)=0$ (or, equivalently, $Q(\bm{\zeta}_1,\bm{\zeta}_2)=0$), then the previous expression would be equal to zero (just calculate the sum over $s$ or $r$). Hence, from now we will consider that $\tilde{Q}(\bm{\zeta}_1,\bm{\zeta}_2)\neq0$. Calculating the sum over $r\in\F_q$ (or over $s\in\F_q$) shows that the previous expression equals
	\begin{equation*}
		q^2G(\eta,\chi)\sum_{u,t\in\F_q^\times}\eta(u)\chi\left(u\left(\frac{t^2j^2}{2\tilde{Q}(\bm{\zeta}_1,\bm{\zeta}_2)}-j\right)\right).
	\end{equation*}
	By adding and subtracting another term with $t=0$, the previous sum equals
	\begin{equation*}
		\begin{split}
			&q^2G(\eta,\chi)\sum_{u\in\F_q^\times}\eta(u)\sum_{t\in\F_q}\chi\left(u\left(\frac{t^2j^2}{2\tilde{Q}(\bm{\zeta}_1,\bm{\zeta}_2)}-j\right)\right)
			-
			q^2G(\eta,\chi)\sum_{u\in\F_q^\times}\eta(u)\chi\left(-uj\right)
			\\=&
			q^2G^2(\eta,\chi)\eta(2\tilde{Q}(\bm{\zeta}_1,\bm{\zeta}_2))\sum_{u\in\F_q^\times}\chi(-uj)
			-
			q^2G^2(\eta,\chi)\eta(-j),
		\end{split}
	\end{equation*}
	where in the equality we applied Lemma~\ref{lemma:quadratic_gauss_sums_ax^2+bx} to the sum over $t\in\F_q$ and \eqref{equation:general_gauss_sums} to the remaining term. Finally, since $G^2(\eta,\chi)=q\eta(-1)$ and $j\in\tilde{\mathcal{Q}}$, the previous expression simplifies to
	\begin{equation*}
		q^3\left(1-\eta(-2\tilde{Q}(\bm{\zeta}_1,\bm{\zeta}_2))\right).
	\end{equation*}
	
	We conclude that the left-hand side of \eqref{equation:cone_claim} is equal to
	\begin{equation*}
		\frac{q-1}{2}\left[\bm{1}_{\mathcal{S}_\times\setminus\mathcal{L}_{\bm{\zeta}_1}}(\bm{\zeta}_2)+q\delta_{\bm{\zeta}_1}(\bm{\zeta}_2)\right]
		+
		\frac{1}{2}\left(1-\eta(-2\tilde{Q}(\bm{\zeta}_1,\bm{\zeta}_2))\right)\bm{1}(Q(\bm{\zeta}_1,\bm{\zeta}_2)\neq0).
	\end{equation*}
	To finish the proof we just need to show that $Q(\bm{\zeta}_1,\bm{\zeta}_2)=0$ if and only if $\bm{\zeta}_2=\alpha\bm{\zeta}_1$ for some $\alpha\in\F_q^\times$. Sufficiency is clear. For the necessity, we proceed as in the end of Section \ref{section:proving_claim_1}. Indeed, $\bm{\zeta}_2=\bm{\xi}-\bm{\zeta}_1$ with $\bm{\xi}:=\bm{\zeta}_1+\bm{\zeta}_2$. Then, $Q(\bm{\xi})=2Q(\bm{\zeta}_1,\bm{\zeta}_2)=0$. By \eqref{equation:convolution_cone}, we see that either $\bm{\xi}=\bm{0}$, in which case $\bm{\zeta}_2=-\bm{\zeta}_1$, or $\bm{\xi}\in\mathcal{S}_\times$, in which case $\bm{\zeta}_2=\alpha\bm{\zeta}_1$ with $\alpha\neq0,-1$. Hence, we must have $\bm{\zeta}_2=\alpha\bm{\zeta}_1$ with $\alpha\in\F_q^\times$ as we wanted to show.
	
	This concludes the proof of Theorem \ref{theorem:sharp_L2-L4_2-cones}.
	
	\section{Maximizers have constant modulus}\label{section:maximizers_const_mod}
	Throughout this section, let $\mathcal{S}\in\{\mathcal{S}^2_j,\mathcal{S}^2_{\times}\}$ for some quadratic surfaces $\mathcal{S}^2_j$ and $\mathcal{S}^2_{\times}$ as in the statements of Theorems \ref{theorem:sharp_L2-L4_2-sphere_jnotsquare} and \ref{theorem:sharp_L2-L4_2-cones}, respectively.
	
	From the discussion in Section \ref{section:symmetrization}, a (not necessarily even) nonzero function $f:\mathcal{S}\to\C$ is a maximizer of the extension inequality
	\begin{equation}\label{equation:sharp_L2-L4_general}
		Q_{\mathcal{S}}(f,f,f,f)
		\leq
		{\bf C}^{\ast}_{\mathcal{S}}(2\to4)
		\left(\sum_{\mathcal{S}}\absol{f}^2\right)^2
	\end{equation}
	 if and only if $f_\sharp$ is constant (which, from the definition of $f_\sharp$, is also nonzero) and $f$ satisfies conditions \eqref{equation:symmetrization_cond1} and \eqref{equation:symmetrization_cond2}. 
	 
	 Therefore, if $f\neq0$ is a maximizer  of \eqref{equation:sharp_L2-L4_general}, then $\absol{f(\bm{\zeta})}^2+\absol{f(-\bm{\zeta})}^2=2f_\sharp^2(\bm{\zeta})\neq0$ is constant for all $\bm{\zeta}\in\mathcal{S}$. From \eqref{equation:symmetrization_cond2}, we have that
	 \begin{equation*}
		\absol{f(\bm{\zeta}_1)}^2+\absol{f(-\bm{\zeta}_1)}^2
		=
		\absol{C(\bm{\zeta}_1,\bm{\zeta}_2)}^2(\absol{f(\bm{\zeta}_2)}^2+\absol{f(-\bm{\zeta}_2)}^2), \text{ for all } \bm{\zeta}_1,\bm{\zeta}_2\in\mathcal{S}.
	 \end{equation*}
	 We conclude that $\absol{C(\bm{\zeta}_1,\bm{\zeta}_2)}=1$ for all $\bm{\zeta}_1,\bm{\zeta}_2\in\mathcal{S}$. This means that $\absol{f}$ is a nonzero constant, as we wanted to show.
	
	\section{Acknowledgments}
	The author was funded by FCT/Portugal and the Recovery and Resilience Plan (PRR) through projects UID/04459/2025 and UID/PRR/04459/2025. This project was the continuation of the author's M.Sc.\ thesis \cite{Ro25} at Instituto Superior T{\'e}cnico, supervised by Diogo Oliveira e Silva, to whom the author is deeply grateful for his insightful guidance. The author is also thankful to Betsy Stovall and Emanuel Carneiro for helpful discussions.

\end{document}